 \theoremstyle{plain}
 \newtheorem{theorem}{Theorem}[section]
 \newtheorem{lemma}[theorem]{Lemma}
 \newtheorem{corollary}[theorem]{Corollary}
 \newtheorem{proposition}[theorem]{Proposition}
  \newtheorem{question}[theorem]{Question}
 \theoremstyle{definition}
 \newtheorem{definition}[theorem]{Definition}
 \newtheorem{definitionnotation}[theorem]{Notation}
 \newtheorem{lemmadef}[theorem]{Definition-Lemma}
 \newtheorem{remark}[theorem]{Remark}
 \theoremstyle{remark}
 \theoremstyle{plain} 
\newcommand{\thistheoremname}{}
\newtheorem{genericthm}[theorem]{\thistheoremname}
  \newtheorem*{genericthm*}{\thistheoremname}
\newenvironment{namedthm*}[1]
  {\renewcommand{\thistheoremname}{#1}%
   \begin{genericthm*}}
  {\end{genericthm*}}
\newcommand{\abs}[1]{\left\vert#1\right\vert}
\newcommand{\divisor}{\operatorname{Div}}
\newcommand{\hg}{\operatorname{H}}
\newcommand{\supp}{\operatorname{Supp}}
\newcommand{\shomo}{\operatorname{\mc{H}om}}
\newcommand{\picard}{\operatorname{Pic}}
\newcommand{\tr}{\operatorname{tr}}
\newcommand{\QCQ}{\operatorname{\mathbb{Q}-Car}}
\newcommand{\CQ}{\operatorname{Car}}
\newcommand{\Hpicard}{\operatorname{HPic}}
\newcommand{\mni}{\medskip\noindent}
\newcommand{\mbb}{\mathbb}
\newcommand{\QQ}{\mbb{Q}}
\newcommand{\NN}{\mbb{N}}
\newcommand{\ZZ}{\mbb{Z}}
\newcommand{\CC}{\mbb{C}}
\newcommand{\RR}{\mbb{R}}
\newcommand{\mc}{\mathcal}
\newcommand{\wt}{\widetilde}
\newcommand{\ol}{\overline}
\DeclarePairedDelimiter\ceil{\lceil}{\rceil}
\DeclarePairedDelimiter\floor{\lfloor}{\rfloor}
\title{\large I\MakeLowercase {njectivity} T\MakeLowercase{heorem for} G\MakeLowercase{eneralised} P\MakeLowercase{airs} 
\MakeLowercase{on} S\MakeLowercase{urfaces}}
\author{\large S\MakeLowercase{antai} \large Q\MakeLowercase{u}}
\begin{document}

\begin{abstract}
	In this paper we study the question asked by Caucher Birkar 
    about injectivity theorem on cohomologies of generalised pairs.
	By applying techniques from complex analytic geometry,
	we show that the injectivity theorem holds
	for generalised Kawamata log terminal (g-klt) generalised pairs on projective surfaces.
\end{abstract}

\maketitle

\setcounter{tocdepth}{1}
\tableofcontents

\section{Introduction}

\mni
{\textbf{\sffamily{Injectivity theorem for generalised pairs.}}}
One of the most fundamental results for complex projective varieties is Koll\'ar's injectivity theorem 
\cite[Theorem 2.2]{Kol-direct-image}, from which
Kodaira's vanishing theorem can be deduced very quickly
by Serre's vanishing theorem.
\begin{theorem}[\protect{\cite[Theorem 2.2]{Kol-direct-image}}]\label{Kol-original}
	Let $X$ be a smooth projective complex variety and $\mc{L}$ a line bundle on $X$.
	Let $s$ be a section of $\mc{L}^k$ for some $k\in \NN$, and $D$ the divisor of $s$.
	The multiplication by $D$ defines a homomorphism on cohomologies, denoted by
	\[ \phi_D^i\colon \hg^i(X, \omega_X\otimes \mc{L}^n) \to \hg^i(X, \omega_X\otimes \mc{L}^{n+k}). \]
	Assume that some power of $\mc{L}$ is generated by global sections, i.e., 
	$\mc{L}$ is semi-ample.  Then $\phi_D^i$ is injective for every $i\ge 0$
	and every $n\ge 1$.
\end{theorem}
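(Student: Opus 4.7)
The plan is to follow the classical strategy of Kollár using cyclic covers and Hodge theory. First, I would reduce to the case in which $D$ has simple normal crossings (SNC) support. Take a log resolution $f\colon X'\to X$ of the pair $(X,D)$. Since $X$ is smooth and $f$ is birational, Grauert-Riemenschneider vanishing gives $R^qf_*\omega_{X'}=0$ for $q>0$ and $f_*\omega_{X'}=\omega_X$, so the projection formula yields an isomorphism $\hg^i(X,\omega_X\otimes\mc{L}^n)\cong\hg^i(X',\omega_{X'}\otimes f^*\mc{L}^n)$ compatible with multiplication by $s$ and $f^*s$. Since semi-ampleness is preserved under pullback, we may replace $(X,\mc{L},s)$ by $(X',f^*\mc{L},f^*s)$ and assume $\supp(D)$ is SNC from the start.

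Next, I would exploit semi-ampleness to construct a good auxiliary cyclic cover. Choose $N$ divisible by $k$ and large enough that $\mc{L}^N$ is base-point-free and that $n+k\le N-1$, and pick, by Bertini, a general $t\in\hg^0(X,\mc{L}^N)$ whose divisor $T$ is smooth with $T+D$ SNC. The degree-$N$ cyclic cover $\pi\colon Y\to X$ branched along $T$ is then smooth projective, with
\[ \pi_*\omega_Y\;=\;\bigoplus_{j=0}^{N-1}\omega_X\otimes\mc{L}^j. \]
Consequently $\hg^i(Y,\omega_Y)$ splits as a direct sum realizing both $\hg^i(X,\omega_X\otimes\mc{L}^n)$ and $\hg^i(X,\omega_X\otimes\mc{L}^{n+k})$ as distinct eigenspaces for the cyclic Galois action of $\mu_N$.

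The final step is the Hodge-theoretic input. Because $Y$ is smooth projective, $\hg^i(Y,\omega_Y)$ is the $F^{\dim Y}$-piece of the pure Hodge structure on $\hg^{i+\dim Y}(Y,\CC)$, and the $\mu_N$-action is compatible with this filtration so that the Galois eigenspaces inherit pure Hodge structures. Multiplication by $s$ on $X$ lifts, under the eigenspace decomposition of $\pi_*\omega_Y$, to a map induced by a section of a $\mu_N$-equivariant line bundle on $Y$ that is a morphism of Hodge structures; strict compatibility of such morphisms with the Hodge filtration (equivalently, Deligne's $E_1$-degeneration of the Hodge-to-de Rham spectral sequence, applied if necessary to the log pair $(Y,\pi^{-1}T)$) then prohibits a nonzero class of top Hodge type from dying, which yields injectivity of $\phi_D^i$.

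The main obstacle is exactly this last step: identifying multiplication by $s$, a priori only a map of coherent sheaves, with a morphism of (pure or mixed) Hodge structures on $Y$. The delicate point is to match the $\mu_N$-eigenspaces of $\hg^{i+\dim Y}(Y,\CC)$ with the summands of $\pi_*\omega_Y$ in a way that respects the Hodge filtration, and then to verify that the map induced by $s$ really corresponds to $\phi_D^i$ under this identification. The reduction to SNC and the construction of the cover are routine; the full force of the theorem is concentrated in the Hodge-theoretic degeneration, and carrying it out carefully, tracking multiplicities of $D$ and the constraint $n\ge 1$, is where the real work lies.
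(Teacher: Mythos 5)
Your outline is evaluated on its own terms, since the paper does not prove Theorem~\ref{Kol-original} but simply cites Koll\'ar. The reduction to SNC support via a log resolution and the cyclic-cover formalism ($\pi_*\omega_Y=\bigoplus_{j=0}^{N-1}\omega_X\otimes\mc{L}^j$, eigenspace decomposition of $\hg^i(Y,\omega_Y)$) are correct and standard. The genuine gap is the final step, and it is not merely ``delicate'': with the cover branched only along a general member $T\in\abs{\mc{L}^N}$, the section $s$ and the divisor $D$ play no role whatsoever in the geometry of $Y$. The map you need to control is $\cdot\,\pi^*s\colon \hg^i(Y,\omega_Y)\to\hg^i(Y,\omega_Y\otimes\pi^*\mc{L}^k)$, whose target is not a graded piece of any Hodge structure attached to $Y$, and there is no reason an arbitrary coherent multiplication map between Galois eigen-summands should underlie a morphism of (mixed) Hodge structures or be strict for the Hodge filtration. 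That compatibility is the entire content of the theorem, not a verification to be appended; note also that in your setup the hypothesis $n\ge 1$ never enters, whereas the statement is false for $n=0$ (e.g.\ $\hg^1(\PP^1,\omega_{\PP^1})\to\hg^1(\PP^1,\omega_{\PP^1}(p))$ is not injective), so any argument that ignores it cannot be complete.

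The classical proofs repair exactly this point by building the covering out of $s$ itself: one takes the $N$-th root of a section such as $s\cdot t$ with $t\in\hg^0(X,\mc{L}^{N-k})$ general, after a log resolution making the total branch divisor SNC. Then the eigensheaf computation identifies $\phi^i_D$, eigenspace by eigenspace, with maps of the form $\hg^i(\omega)\to\hg^i(\omega(\text{reduced SNC divisor}))$ on the cover, equivalently with inclusions of pieces of the logarithmic de Rham complex (twisted by unitary rank-one local systems), and injectivity follows from $E_1$-degeneration of the logarithmic Hodge--de Rham spectral sequence; semi-ampleness is used to choose $t$ so that the relevant fractional coefficients lie in $[0,1)$, and $n\ge 1$ (together with the multiplicities of $D$) enters precisely there. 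This is also the shape of the input the paper actually uses from the literature, namely Theorem~\ref{Ein:Popa} (Esnault--Viehweg, Corollary 5.12(b)). Your proposal stops exactly where that mechanism is needed, as you yourself acknowledge, so as written it is not a proof.
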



Beyond the category of smooth projective varieties, Koll\'ar's injectivity theorem
also holds for projective Kawamata log terminal (klt) pairs, and it is applied to prove
the log abundance theorem for threefolds (see \cite[\S 7]{KMM94}).

\begin{theorem}[\protect{\cite[Lemma 7.3]{KMM94}}]\label{K-inj}
	Suppose that $(X, \Delta)$ is a projective klt pair, and $L$ is an integral $\mathbb{Q}$-Cartier
	divisor such that $L-(K_X +\Delta)$ is semi-ample.  Suppose that $D$ and $D'$ 
	are effective integral $\mathbb{Q}$-Cartier divisors such that $D+D'\in \abs{ m(L- (K_X+\Delta))}$
	for some $m\in \mathbb{N}$.  Denote by
	\[ \varphi_D\colon \mc{O}_X(L) \to \mc{O}_X(L + D)  \]
	the multiplication morphism induced by $D$ (see Notation~\ref{multi-D}).
	Then the homomorphisms on cohomologies induced by $\varphi_D$,
	\[\hg^i(\varphi_D)\colon \hg^i(X, \mc{O}_X(L)) \to \hg^i(X, \mc{O}_X(L+D)),\]
	are injective for all $i\ge 0$.
\end{theorem}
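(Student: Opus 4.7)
The strategy is to descend the classical Kollár injectivity theorem on a smooth model to the klt pair $(X,\Delta)$ via a log resolution, absorbing the fractional discrepancies into a klt SNC boundary upstairs. Take a log resolution $f\colon Y\to X$ of $(X,\Delta+D+D')$ chosen so that the preimage of $\Delta+D+D'$ together with $\mathrm{Exc}(f)$ has simple normal crossing support. Because $(X,\Delta)$ is klt, the discrepancy formula can be written as
\[
K_Y + \Gamma \;=\; f^{*}(K_X+\Delta) + N,
\]
where $\Gamma$ is an effective $\QQ$-divisor with SNC support and $\lfloor\Gamma\rfloor=0$, and $N$ is an effective $f$-exceptional $\QQ$-divisor.

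The key construction is to fold the fractional part of $N$ into $\Gamma$. Set
\[
\Gamma' \;:=\; \Gamma + (\lceil N\rceil - N), \qquad L_Y \;:=\; f^{*}L + \lceil N\rceil.
\]
The components of $\lceil N\rceil - N$ are exceptional and disjoint from those of $\Gamma$, so $\Gamma'$ is SNC with $\lfloor \Gamma' \rfloor = 0$, i.e.\ $(Y,\Gamma')$ is a klt SNC pair; moreover $L_Y$ is an integral Cartier divisor on $Y$ satisfying
\[
L_Y - (K_Y + \Gamma') \;=\; f^{*}\bigl(L - (K_X+\Delta)\bigr),
\]
which is semi-ample by hypothesis. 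Since $f^{*}D + f^{*}D' \sim_{\QQ} m\cdot f^{*}(L-(K_X+\Delta))$, I would now apply Kollár's injectivity theorem for klt simple normal crossing pairs on smooth projective varieties (in the form of Esnault--Viehweg and Fujino) to the data $(Y,\Gamma',L_Y)$ together with the effective divisor $f^{*}D$. This yields that the multiplication by $f^{*}D$ induces injective maps $\hg^i(Y,\mc{O}_Y(L_Y)) \to \hg^i(Y,\mc{O}_Y(L_Y+f^{*}D))$ for every $i\ge 0$.

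To transport this injectivity back to $X$, I would use the vanishing of higher direct images $R^{j}f_{*}\mc{O}_Y(\lceil N\rceil) = 0$ for $j>0$. Since $\lceil N\rceil$ is effective and $f$-exceptional, $f_{*}\mc{O}_Y(\lceil N\rceil) = \mc{O}_X$ is automatic; the vanishing of the higher direct images follows from relative Kawamata--Viehweg vanishing applied to the equivalence $\lceil N\rceil - (K_Y + \Gamma') = -f^{*}(K_X+\Delta)$, which is $f$-trivial and hence $f$-nef-and-big because $f$ is birational. By the projection formula this gives $Rf_{*}\mc{O}_Y(L_Y)=\mc{O}_X(L)$ and $Rf_{*}\mc{O}_Y(L_Y+f^{*}D)=\mc{O}_X(L+D)$; the Leray spectral sequence then identifies the cohomology groups on $Y$ with those on $X$ and intertwines the upstairs multiplication map with $\hg^i(\varphi_D)$, producing the desired injectivity. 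The main obstacle is precisely this last step: verifying the vanishing $R^{j}f_{*}\mc{O}_Y(\lceil N\rceil)=0$ (where the klt hypothesis is essential) and carefully checking that the multiplication morphism $\varphi_{f^{*}D}$ is compatible with $\varphi_D$ under $Rf_{*}$, so that injectivity genuinely descends.
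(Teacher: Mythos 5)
Your overall strategy is sound, and in fact it is essentially the route the paper itself follows for its generalisation: the paper does not reprove Theorem~\ref{K-inj} (it quotes it from [KMM94]), but its proof of Theorem~\ref{b-kodaira-non-negative} together with Proposition~\ref{covering} is exactly your scheme with $\mathbf{M}=0$ --- pass to a log resolution, absorb the klt discrepancies into an snc boundary with coefficients in $[0,1)$, apply the Esnault--Viehweg/Ein--Popa injectivity theorem (Theorem~\ref{Ein:Popa}) upstairs with the $\QQ$-effectivity supplied by $D'$, and descend via vanishing of higher direct images and Leray.

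There is, however, a genuine gap as written: the theorem only assumes that $L$ and $D$ are integral \emph{$\QQ$-Cartier} divisors, not Cartier. Then the pullbacks $f^*L$ and $f^*D$ are $\QQ$-divisors which in general acquire fractional coefficients along the exceptional divisors, and three of your steps fail: (i) $L_Y:=f^*L+\ceil{N}$ need not be an integral divisor, so the claim that it is Cartier is false; (ii) the injectivity theorem on the smooth model needs the twisting divisor to be an effective \emph{Cartier} divisor, and $f^*D$ need not be one; (iii) the projection-formula descent $Rf_*\mc{O}_Y(L_Y)=\mc{O}_X(L)$ presupposes that $\mc{O}_X(L)$ is invertible, whereas for a non-Cartier Weil divisor it is only reflexive, so $\mc{O}_Y(L_Y)\not\cong f^*\mc{O}_X(L)\otimes\mc{O}_Y(\ceil{N})$ and the argument via $R^jf_*\mc{O}_Y(\ceil{N})=0$ does not apply. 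These are precisely the points the paper's auxiliary results are designed to handle: Proposition~\ref{covering} writes $p^*L=\floor{p^*L}+\sum d_iE_i$, absorbs the fractional parts $\langle a_i-d_i\rangle$ into the boundary so that $L_Y$ is Cartier, and obtains $p_*\mc{O}_Y(L_Y)=\mc{O}_X(L)$ and $R^ip_*\mc{O}_Y(L_Y)=0$ from the cited lemmas and relative Kawamata--Viehweg vanishing applied to $L_Y$ directly; and Lemma~\ref{suff-large-m} (used in Proposition~\ref{reduce-to-cartier}) replaces $D$ by a Cartier member $G\in\abs{nmA}$, since injectivity for multiplication by $G$ implies it for $D$ through the factorisation of multiplication maps. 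In the special case where $L$ and $D$ are honestly Cartier your proof is correct as written (your vanishing $R^jf_*\mc{O}_Y(\ceil{N})=0$ via $\ceil{N}-(K_Y+\Gamma')=-f^*(K_X+\Delta)$ is fine, as this divisor is nef and big over $X$ for the birational $f$); to prove the stated theorem you must add the rounding/reduction step above.
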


The injectivity theorems
(Theorem~\ref{Kol-original} and Theorem~\ref{K-inj}) have many applications in algebraic geometry and 
minimal model program (MMP) (see \cite{KMM94}, \cite{Fuj17-inj-thms}, \cite{Fuj-survey}).
Moreover, generalisations of Theorem~\ref{Kol-original} and Theorem~\ref{K-inj} also apply to other categories 
of geometric objects.  For example, \cite{Fuj17-inj-thms} studies the injectivity theorem
and its applications for simple normal crossing varieties (resp. pairs);
\cite{Fujino-M21} studies variants of the injectivity theorem in the complex analytic setting.

On the other hand, \emph{generalised pairs}
(see Definition~\ref{g-pairs-defn}), a geometric structure
generalising usual pairs (see \cite{km98}) introduced in \cite{BirkarZhang}, play a very important role in recent developments 
of birational geometry, see \cite{birkar-g-pairs-in-BG}.
Roughly speaking, a generalised pair
$(X, B + \mathbf{M})$ consists of a normal variety $X$,
an effective divisor $B$, and a b-divisor $\mathbf{M}$
that descends to a nef divisor on a birational model of $X$.
Singularities of generalised pairs are defined in
a similar fashion for usual pairs, see \cite[page 361]{B-Fano}, 
so we can talk about \emph{generalised lc (g-lc)}, \emph{generalised klt (g-klt)},
and \emph{generalised dlt (g-dlt)} generalised pairs.

Generalised pairs appear naturally in birational geometry, for example,
the canonical bundle formula (see \cite[(3.4)]{B-Fano}).  
Caucher Birkar asked the question whether the following generalised version 
of the injectivity theorem
for g-klt $\QQ$-g-pairs holds.

\begin{question}[Birkar]\label{qu-bir}
	Suppose that $(X, B+\mathbf{M})$ is a projective g-klt $\QQ$-g-pair.  
	Assume that $L$ is an integral $\mathbb{Q}$-Cartier
	divisor such that
	\[A:= L- (K_X+B+\mathbf{M}_X)\]
	is semi-ample. 	
	Assume that $D$ is an integral effective $\mathbb{Q}$-Cartier divisor 
	such that 
    \[ \hg^0(X, \mc{O}_X(mA-D)) \not = 0 \text{ for some } m\in \NN. \]
	Then the homomorphisms induced by multiplication by $D$,
	\[\hg^i(X, \mc{O}_X(L)) \to \hg^i(X, \mc{O}_X(L+D)),\]
	are injective for all $i\ge 0$.
\end{question}

Notice that Question~\ref{qu-bir} can be reduced quickly 
to usual klt pairs (cf. Lemma~\ref{abundant-to-nef-big})
if the nef part $\mathbf{M}$ is sufficiently positive, e.g.,
$\mathbf{M}$ is b-nef-and-abundant (see \S \ref{b-abnd-divs}).

\begin{proposition}[=Corollary~\ref{abundant-inj-g-pair}]\label{abundant-inj-g-pair-intro}
    Suppose that $(X, B+\mathbf{M})$ is a projective g-klt $\QQ$-g-pair. 
    Assume that $\mathbf{M}$ is b-nef-and-abundant.
    Then the injectivity theorem holds for $(X, B + \mathbf{M})$.
\end{proposition}

By employing techniques from complex analytic geometry (see \S \ref{complex-a-geo}), the main result of this paper is that
the injectivity theorem holds for g-klt $\QQ$-g-pairs
of dimension 2 without any extra assumptions on
the nef parts of the g-pairs.

\begin{theorem}\label{inj-surface-b-iitaka}
    Let $X$ be a normal projective variety of dimension 2, and
    let $(X, B+ \mathbf{M})$ be a projective g-klt $\QQ$-g-pair. 
    Then the injectivity theorem holds for $(X, B+\mathbf{M})$.
\end{theorem}

Compared to Proposition~\ref{abundant-inj-g-pair-intro}, 
the main difficulty to prove Theorem~\ref{inj-surface-b-iitaka}
is to tackle the case when the linear system associated 
to $\mathbf{M}_Y$ is empty 
for any higher birational model $Y$ of $X$.


\mni
{\textbf{\sffamily{Construction of the article.}}}
In Section~\ref{preliminaries-sec} 
we introduce some basic notions that are used in the proof of 
Theorem~\ref{inj-surface-b-iitaka}, such as 
\emph{numerical dimension},
\emph{b-nef-and-abundant divisors}, etc.
Moreover, as the data of generalised pairs
is quite involved, \S \ref{terminologoes-sec}
is devoted to clarifying the terminologies during
the proof of Theorem~\ref{inj-surface-b-iitaka}.
Most of the results in Section~\ref{preliminaries-sec}
are well-known, but we include some of the proofs
for the lack of references in the setting of generalised pairs.

Section~\ref{complex-a-geo} introduces the necessary 
background of complex analytic geometry techniques to 
prove Theorem~\ref{inj-surface-b-iitaka}.
Proposition~\ref{pushf-positivity} is the main result of this 
section, which is a variant of \cite[Lemma 5.4]{CP17}.
We include the necessary details of some complex analytic facts
for the convenience of readers.
Clearly, the results in this section are
well-known to experts in complex analytic geometry.

In Section~\ref{b-abnd-results-sec} 
we show that the injectivity theorem holds if the nef part of 
the g-pair satisfies certain positive conditions.
Some corollaries of Theorem~\ref{b-kodaira-non-negative} 
can be reduced easily to usual klt pairs, but the more 
general result Theorem~\ref{b-kodaira-non-negative}
and its proof
will be used in the proof of Theorem~\ref{inj-surface-b-iitaka}.

Combining all the results explained in
Sections \ref{preliminaries-sec}, \ref{complex-a-geo},
and \ref{b-abnd-results-sec}, we conclude the proof of 
Theorem~\ref{inj-surface-b-iitaka} in Section~\ref{inj-general-surfaces}.


\mni
{\textbf{\sffamily{Acknowledgements.}}}
The main part of this work was finished when I was a postdoc at Yau Mathematical Sciences Centre
at Tsinghua University.
I thank my postdoc mentor Caucher Birkar for introducing this problem and 
for his helpful comments on the proofs.
I would like to thank Christian Schnell for pointing out the results in \cite{CP17}
and for his very helpful discussion about positivity of direct images
during the first International Congress of Basic Science (ICBS 2023) at Beijing.
I am very grateful to Gao Chen, Long Li, and Miao Song for helpful conversations about complex analytic geometry.
I would also like to thank Xiaowei Jiang for reading a draft version of this paper 
and providing many helpful suggestions.
I extend my sincere thanks to the anonymous referees for carefully
reading of this manuscript and for giving many constructive comments 
which substantially helped improving the quality of this paper.
This work was partially supported by the Project of Stable Support 
for Youth Team in Basic Research Field, funded under Chinese Academy
of Sciences grant YSBR-001.


\section{Preliminaries}\label{preliminaries-sec}

We work over an algebraically closed field $\mbb K$ of characteristic zero.
Without loss of generality, we can assume that $\mbb K$ is the field of 
complex numbers $\CC$ (see \cite{FR86-Lefschetz-principle}).
By saying a \emph{scheme}, we mean a separated scheme of finite type over 
$\mathbb{K}$.
A \emph{variety} is an irreducible, reduced, and quasi-projective scheme.

\begin{definition}
	A \emph{contraction} is a projective morphism of schemes $f: X\to Y$
    such that $f_*\mc{O}_X = \mc{O}_Y$; $f$ is not necessarily birational.
    In particular, $f$ has connected fibres.  Moreover, if $X$ is normal, then $Y$ is also normal.
\end{definition}

For the notation and conventions about $\QQ$-divisors and $\RR$-divisors, such as $\QQ$-linearly equivalence, 
ample $\QQ$-divisors, big $\QQ$-divisors, and pseudo-effective $\QQ$-divisors, etc.,
we refer the readers to \cite[\S 3.1]{bchm} and \cite[(2.3)]{B-Fano}.
In particular, for an $\RR$-divisor $B = \sum a_{\Gamma} \Gamma$ on a scheme $X$ where every $\Gamma$
is a prime divisor on $X$ and $a_{\Gamma} \in \RR$, the \emph{support}
of $B$, denoted by $\supp B$, is the closed subset $\bigcup_{a_{\Gamma}\not=0} \Gamma$;
on the other hand, by writing $\floor{\supp B}$, we mean the reduced Weil divisor 
$\sum_{a_{\Gamma}\not=0} \Gamma$ on $X$.
A $\QQ$-divisor is \emph{$\QQ$-effective} if it is $\QQ$-linearly equivalent to
an effective $\QQ$-divisor.

\begin{definition}
    Let $D$ be a $\QQ$-divisor on a normal variety $X$.  We say that $D$ is \emph{semi-ample}
    if the linear system $\abs{mD}$ is base point free for some $m\in \NN$.
    It is clear that $D$ is semi-ample if and only if
    there is a contraction $f\colon X\to Y$ of normal varieties 
    and an ample $\QQ$-divisor $A$ on $Y$ such that $D = f^* A$.
\end{definition}

For singularities of pairs, such as lc, klt, and dlt, we refer 
the readers to \cite[Chapter 2]{km98}.
For the language of \emph{b-divisors}; we refer the readers to \cite{flips_3folds_4folds} and \cite[(2.7)]{B-Fano} 
for more details about b-divisors.

\begin{definition}[\protect{\cite[Definition 1.4]{BirkarZhang}}, \protect{\cite[(2.13)]{B-Fano}}]\label{g-pairs-defn}
    An \emph{$\RR$-generalised pair}, resp. a \emph{$\QQ$-generalised pair}, 
    (\emph{$\RR$-g-pair}, resp. \emph{$\QQ$-g-pair}, for short) 
    $(X/Z, B + \mathbf{M})$ consists of
    \begin{itemize}
    	\item a normal quasi-projective variety $X$ equipped with a projective morphism $X\to Z$,
    	\item an $\RR$-divisor (resp. a $\QQ$-divisor) $B\ge 0$ on $X$, and
    	\item an $\RR$-b-divisor (resp. a $\QQ$-b-divisor) $\mathbf{M}$ over $X$ that descends to a nef$/Z$ $\RR$-Cartier $\RR$-divisor (resp. a nef$/Z$ 
    	    $\QQ$-Cartier $\QQ$-divisor) 
    	    $\mathbf{M}_{X'}$ on some birational model $X'\to X$ of $X$, and
    	\item $K_X + B + \mathbf{M}_X$ is $\RR$-Cartier (resp. $\QQ$-Cartier). 
    \end{itemize}
    We call $B$ the \emph{boundary part} and $\mathbf{M}$ 
    the \emph{nef part} of the $\RR$-g-pair (resp. $\QQ$-g-pair) $(X/Z, B + \mathbf{M})$.
\end{definition}

For singularities of generalised pairs, see \cite[\S 4]{BirkarZhang}.


\subsection{B-nef-and-abundant divisors}\label{b-abnd-divs}

\begin{definition}\label{abundant:divisor}
	Let $D$ be a nef $\QQ$-divisor on a projective normal variety of dimension $n$.
	We say that $D$ is \emph{abundant} (or \emph{good}) if $\kappa (D) = \nu (D)$,
	that is, if its Iitaka dimension $\kappa (D)$ 
    is equal to its \emph{numerical dimension} $\nu (D)$
	which is the largest integer $k \ge 0$ such that $D^k\cdot A^{n-k}\not =0$
	for an ample divisor $A$ (see \cite[\S V.2.a]{nakayama}).
	It is well-known that $0\le \nu (D) \le \dim X$ and that $\kappa (D)\le \nu (D)$, 
	see \cite[Lemma V.2.1]{nakayama} and \cite[page 1067]{num-dim}.
\end{definition}

\begin{remark}
    Although an abundant divisor is nef in Definition~\ref{abundant:divisor}, we usually say that 
    the divisor is \emph{nef and abundant} instead of just saying abundant.	
\end{remark}

\begin{remark}
    For a nef $\QQ$-divisor $D$ on a non-singular projective variety $X$, 
    the numerical dimension $\nu (D)$ is equal to various other numerical 
    Kodaira dimensions:
    \[ \kappa_{\nu}(D) = \kappa_{\sigma}^-(D) = \kappa_{\sigma}^+ (D) = \kappa_{\sigma}(D) = \nu (D). \] 
    We refer the readers to \cite[\S V.2]{nakayama} for details about these numerical Kodaira dimensions.
    If $X$ is a smooth curve, we have $\nu (D) = 0$ if $\deg D = 0$ and $\nu (D) = 1$ if $\deg D >0$
    by Definition~\ref{abundant:divisor}.
    In this case, it is also straightforward to show that $\kappa_{\sigma}(D) = 0$ 
    if and only if $\deg D = 0$
    by \cite[Proposition V.2.7 (2) and (3)]{nakayama}.
\end{remark}


\begin{definition}[\protect{\cite[Notation V.2.24]{nakayama}}]\label{rel-abdt}
	Let $f\colon X\to Y$ be a contraction of normal varieties and $D$ an $\RR$-Cartier divisor on $X$.
	We say that $D$ is \emph{$f$-abundant} (or \emph{abundant over $Y$}, or \emph{abundant$/Y$}) 
	if $D|_F$ is nef and abundant where $F$ is a 
    \emph{very general} fibre of $f$.
\end{definition}


\begin{definition}\label{b-abundant}
    Let $X$ be a normal projective variety and 
    $\bf{M}$ a b-$\RR$-Cartier b-divisor over $X$.
    We say that $\bf{M}$ is \emph{b-nef-and-abundant} if $\bf{M}$ descends to some birational model $Y$ of $X$
    and ${\bf{M}}_Y$ is nef and abundant.
\end{definition}


\begin{definition}\label{fibre-b-divisor}
	Let $f\colon X\to Y$ be a contraction of normal varieties and $\bf M$ a b-$\RR$-Cartier b-divisor on $X$.  
	Let $F$ be a general fibre of $f$.  We define a b-$\RR$-Cartier b-divisor $\mathbf{M}|_F$ as follows.
	Let $\mu\colon X'\to X$ be a nonsingular birational model of $X$ such that $\mathbf M$ descends to an $\RR$-Cartier
	divisor $\mathbf{M}_{X'}$ on $X'$.  Denote by $F'$ a general fibre of $f\circ \mu$.
	Then $F'$ is nonsingular (by generic smoothness), $F$ is normal, and the induced morphism $\mu'\colon F'\to F$
	is birational.  We define $\mathbf{M}|_F$ as the $\RR$-Cartier closure of the $\RR$-Cartier divisor $\mathbf{M}_{X'}|_{F'}$;
	in particular, $\mathbf{M}|_F$ is a b-$\RR$-Cartier b-divisor that descends to $F'$ and 
	$\mu'_* (\tr_{F'} \mathbf{M}|_F) = \mu'_* (\mathbf{M}_{X'}|_{F'}) = (\tr_X\mathbf{M})|_F$.
\end{definition}


\begin{definition}\label{b-rel-abdt}
	Let $f\colon X\to Y$ be a contraction of normal varieties and $\mathbf{M}$ a b-$\RR$-Cartier b-divisor on $X$.  
	Let $F$ be a very general fibre of $f$.  We say that $\mathbf{M}$ is \emph{b-nef-and-abundant over $Y$} 
	(or \emph{b-nef-and-abundant relative to $f$}) if $\mathbf{M}|_F$ is b-nef-and-abundant.
\end{definition}


\begin{definition}\label{b-iitaka}
    Let $X$ be a normal projective variety and $\mathbf{M}$ a b-$\RR$-Cartier b-divisor over $X$.
    We say that $\mathbf{M}$ has \emph{b-Iitaka dimension $k$} if $\mathbf{M}$ descends to
    some birational model $Y$ of $X$ such that $\kappa (\mathbf{M}_Y) = k$.	
\end{definition}


\begin{lemma}[\protect{\cite[Lemma V.2.3 (1)]{nakayama}}]\label{abundant-to-nef-big}
	Let $D$ be a nef and abundant $\RR$-divisor on a non-singular projective variety $X$.
	Then there exist a birational morphism $\mu\colon Y\to X$, a surjective morphism 
	$f\colon Y\to Z$ of non-singular projective varieties,
	and a nef and big $\RR$-divisor $B$ of $Z$ such that $\mu^*D \sim_{\QQ} f^* B$.
\end{lemma}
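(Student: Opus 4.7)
The plan is to construct $Y$, $Z$, $f$, and $B$ directly from the Iitaka fibration of $D$, and then use the hypothesis $\kappa(D)=\nu(D)$ to descend $\mu^*D$ (up to $\QQ$-linear equivalence) from $Y$ to $Z$.

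First I would set $k:=\kappa(D)=\nu(D)$ and take a sufficiently divisible $m$ so that the rational map $\phi_{|mD|}\colon X\dashrightarrow \PP^N$ realises the Iitaka fibration. Resolving the indeterminacy of this rational map and taking a smooth resolution of the image, I would obtain a non-singular projective $Y$ with a birational morphism $\mu\colon Y\to X$ and a surjective morphism $f\colon Y\to Z$ onto a non-singular projective variety $Z$ with $\dim Z=k$, such that $f$ is (birational to) the Iitaka fibration of $\mu^*D$. In particular $\mu^*D$ is still nef and abundant, and the restriction $\mu^*D|_F$ to a very general fibre $F$ of $f$ satisfies $\kappa(\mu^*D|_F)=0$.

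Next I would use the hypothesis $\kappa(D)=\nu(D)$ together with the fact that for a nef $\RR$-divisor and a surjective morphism of smooth projective varieties, the numerical dimension on a very general fibre is bounded above by $\nu(\mu^*D)-\dim Z$, to conclude $\nu(\mu^*D|_F)=0$. Combined with nefness, this forces $\mu^*D|_F\equiv 0$ numerically. At this point the key step is to upgrade \emph{numerical triviality on general fibres} to \emph{$\QQ$-linear equivalence to a pullback}; this is the main obstacle. I would invoke the relative abundance/descent theorem for nef $\RR$-divisors numerically trivial on fibres (Nakayama's descent result in Chapter V of \cite{nakayama}, which in the $\QQ$-case goes back to Kawamata's fibration theorem) to produce an $\RR$-Cartier $\QQ$-divisor $B$ on $Z$ with $\mu^*D\sim_{\QQ} f^*B$.

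Finally, I would verify the positivity properties of $B$. Nefness: if $C\subset Z$ is any curve, pick a curve $\tilde C\subset Y$ with $f_*\tilde C=(\deg f|_{\tilde C})\,C$ (using that $f$ is surjective and $Z$ is smooth); then $B\cdot C$ is a positive multiple of $\mu^*D\cdot \tilde C\ge 0$. Bigness: from $\mu^*D\sim_{\QQ}f^*B$ and basic properties of Iitaka dimension under pullback and surjective morphisms, one has $\kappa(B)=\kappa(f^*B)=\kappa(\mu^*D)=\kappa(D)=k=\dim Z$, so $B$ is big. Together with nefness this gives the nef and big $\RR$-divisor on $Z$ required by the statement.

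The expected main difficulty is the descent step: extracting an actual $\QQ$-linear equivalence $\mu^*D\sim_{\QQ}f^*B$ from the numerical information $\mu^*D|_F\equiv 0$. All other steps are essentially formal consequences of the construction of the Iitaka fibration and the behaviour of intersection numbers under surjective morphisms.
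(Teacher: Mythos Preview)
The paper does not give its own proof of this lemma: it is stated with a citation to \cite{nakayama} Lemma V.2.3 (1) and no argument is supplied. So there is nothing in the paper to compare your proposal against directly; the relevant comparison is with Nakayama's original proof, and your outline follows that proof essentially step for step (Iitaka fibration, numerical triviality on the general fibre, descent to the base, then verification that the resulting divisor on $Z$ is nef and big).

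One point to tighten: your justification that $\nu(\mu^*D|_F)=0$ appeals to a general inequality ``$\nu(\mu^*D|_F)\le \nu(\mu^*D)-\dim Z$'', but no such inequality holds for arbitrary fibrations (take a product $Z\times F$ and pull back an ample divisor from $F$). What makes it work here is the extra structure coming from the Iitaka fibration: writing $m\,\mu^*D\sim f^*H+E$ with $H$ ample on $Z$ and $E\ge 0$, one has $f^*H\le m\,\mu^*D$ with both sides nef, so for ample $A$ on $Y$,
\[
\mu^*D\cdot (f^*H)^{k}\cdot A^{n-k-1}\;\le\; m^{k}\,(\mu^*D)^{k+1}\cdot A^{n-k-1}\;=\;0,
\]
using $\nu(\mu^*D)=k$. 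Since a general fibre $F$ is numerically a positive multiple of $(f^*H)^{k}$, this gives $(\mu^*D|_F)\cdot (A|_F)^{\dim F-1}=0$, hence $\mu^*D|_F\equiv 0$. With this correction your sketch is sound, and, as you say, the genuine content lies in the descent step (which is exactly what Nakayama supplies).
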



We collect some useful properties of nef and abundant divisors.

\begin{lemma}[\protect{\cite[Lemma 2.3]{Ein:Popa08}}]\label{pre:nef:abundant}
	Let $X$ be a normal projective variety and $B$ a nef and abundant $\QQ$-divisor on $X$.
	\begin{itemize}
		\item [\emph{(i)}] If $L$ is a globally generated line bundle on $X$ and if $D\in \abs{L}$ is a general divisor, then $B|_D$ is also nef and abundant.
		\item [\emph{(ii)}] If $C$ is another nef and abundant $\QQ$-divisor on $X$, then $B+C$ is also nef and abundant.
		\item [\emph{(iii)}] If $f: Y\to X$ is a surjective morphism from another normal projective variety, then $f^*B$ is nef and abundant.
	\end{itemize}
\end{lemma}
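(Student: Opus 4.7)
My plan is to leverage Lemma~\ref{abundant-to-nef-big} together with its converse: if $h \colon V \to Z$ is a surjective morphism of projective varieties with $V$ nonsingular, and $D_{0}$ is a nef and big $\QQ$-divisor on $Z$, then $h^{*}D_{0}$ is nef and abundant on $V$, with $\kappa(h^{*}D_{0}) = \nu(h^{*}D_{0}) = \dim Z$. I would verify the equality of Iitaka dimensions by Stein factorisation, reducing to the case of connected fibres where $h^{0}$ is preserved under pullback, and the equality of numerical dimensions by the projection formula applied to top powers of $D_{0}$ (using that $(h^{*}D_{0})^{k} = 0$ for $k > \dim Z$ while $(h^{*}D_{0})^{\dim Z} \cdot H^{\dim V - \dim Z}$ is a positive fibre-class intersection). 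Combined with Lemma~\ref{abundant-to-nef-big}, this characterises nef and abundant $\QQ$-divisors (up to birational modification) as pullbacks of nef and big divisors under surjective morphisms. Since $\kappa$ and $\nu$ are birational invariants, I am free to pass to nonsingular birational models throughout.

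For (iii), pullback of a nef divisor is nef. I would apply Lemma~\ref{abundant-to-nef-big} to $B$ to obtain $\mu^{*}B \sim_{\QQ} g^{*}B_{0}$ on a nonsingular model $\mu \colon \wt X \to X$, with $g \colon \wt X \to Z$ surjective and $B_{0}$ nef and big. Then, choosing a nonsingular model $\wt Y$ dominating both $Y$ and $\wt X$ with birational $\rho \colon \wt Y \to Y$ and surjective $\wt f \colon \wt Y \to \wt X$ satisfying $f \circ \rho = \mu \circ \wt f$, I get $\rho^{*} f^{*} B \sim_{\QQ} (g \circ \wt f)^{*} B_{0}$ with $g \circ \wt f$ surjective, and the converse above applies. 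For (i), a general $D \in |L|$ is prime by Bertini, and $B|_{D}$ is nef. On the same model I would take a general $\wt D \in |\mu^{*}L|$ so that $(\mu^{*}B)|_{\wt D} \sim_{\QQ} (g|_{\wt D})^{*}B_{0}$. When $g$ has positive-dimensional fibres, $\wt D$ meets every fibre and $g|_{\wt D}$ remains surjective onto $Z$, so abundance follows from the converse. When $g$ is generically finite, $\mu^{*}B$ is already nef and big; writing $\mu^{*}B \sim_{\QQ} A + E$ with $A$ ample and $\supp E \not\supseteq \wt D$, restriction shows $(\mu^{*}B)|_{\wt D}$ is nef and big, hence abundant.

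For (ii), my plan is to apply Lemma~\ref{abundant-to-nef-big} to both $B$ and $C$ and pass to a common nonsingular birational model $\wt X$ carrying surjective morphisms $g_{B} \colon \wt X \to Z_{B}$ and $g_{C} \colon \wt X \to Z_{C}$ with $\mu^{*}B \sim_{\QQ} g_{B}^{*}B_{0}$ and $\mu^{*}C \sim_{\QQ} g_{C}^{*}C_{0}$ for some nef and big $B_{0}$, $C_{0}$. Let $W$ denote the image of the joint morphism $\psi = (g_{B}, g_{C}) \colon \wt X \to Z_{B} \times Z_{C}$, with induced surjection $\wt\psi \colon \wt X \to W$; both projections $W \to Z_{B}$ and $W \to Z_{C}$ are then surjective. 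On $Z_{B} \times Z_{C}$, $p_{B}^{*}B_{0} + p_{C}^{*}C_{0}$ is nef and big because, for $m \gg 0$, it dominates the ample class $p_{B}^{*}H_{B} + p_{C}^{*}H_{C}$ modulo an effective divisor, where $H_{B}$, $H_{C}$ are ample. Restricting to $W$, the analogous decomposition shows $(p_{B}^{*}B_{0} + p_{C}^{*}C_{0})|_{W}$ is nef and big on $W$. Finally, the converse of Lemma~\ref{abundant-to-nef-big} applied to $\wt\psi$ yields nef and abundance of $\mu^{*}(B + C)$, hence of $B + C$. The principal obstacle lies in verifying this bigness claim on $W$: one must choose effective representatives of $p_{B}^{*}(mB_{0} - H_{B})$ and $p_{C}^{*}(mC_{0} - H_{C})$ whose supports do not contain $W$, which is possible precisely because both projections from $W$ are surjective and so no divisor pulled back from a single factor can contain the whole of $W$.
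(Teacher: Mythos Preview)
The paper does not supply its own proof of this lemma; it is quoted directly from \cite{Ein:Popa08} Lemma 2.3 and stated without argument. So there is no ``paper's proof'' to compare against. That said, your approach --- exploiting the Kawamata characterisation (Lemma~\ref{abundant-to-nef-big} together with its converse) to reduce everything to manipulations with nef and big divisors on targets of fibrations --- is exactly the standard method, and is essentially what Ein--Popa themselves do.

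Your proofs of (ii) and (iii) are correct. The irreducibility of the image $W\subseteq Z_B\times Z_C$ is what makes your final bigness argument in (ii) go through: since $W$ is irreducible and surjects onto both factors, it cannot be contained in the union $p_B^{-1}(\supp E_B)\cup p_C^{-1}(\supp E_C)$, so the effective part restricts to an honest effective divisor on $W$. That point is fine.

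There is, however, a small but genuine gap in your treatment of (i). In the case where $g$ has positive-dimensional fibres you assert that a general $\wt D\in|\mu^*L|$ meets every fibre, hence $g|_{\wt D}$ surjects onto $Z$. This fails precisely when $\mu^*L|_F\simeq\mathcal{O}_F$ for the general fibre $F$: then every nonzero section of $\mu^*L$ restricts to a nonvanishing constant on $F$, so $\wt D\cap F=\emptyset$ and $g|_{\wt D}$ is \emph{not} surjective. The fix is short: in that situation $\mu^*L$ descends through $g$ (after Stein factorisation) to a globally generated line bundle $L_Z$ on $Z$, so $\wt D=g^{-1}(D_Z)$ for general $D_Z\in|L_Z|$, and one is reduced to showing that $B_0|_{D_Z}$ is nef and big. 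Bigness follows from the inequality
\[
(mB_0)^{\dim Z-1}\cdot L_Z \;\ge\; A^{\dim Z-1}\cdot L_Z \;>\;0,
\]
obtained by writing $mB_0=A+E$ with $A$ ample and $E\ge 0$ and peeling off one factor at a time (each discarded term is an intersection of nef classes on the effective divisor $E$, hence nonnegative); the strict positivity on the right uses that $A$ is ample and $L_Z\not\equiv 0$. With this case added, your argument for (i) is complete.
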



\subsection{Terminologies about the injectivity theorem}\label{terminologoes-sec}

\begin{definitionnotation}\label{multi-D}
	Let $X$ be a normal variety.  Let $L$ and $D$ be Weil divisors on $X$,
	and assume that $D$ is effective.
	We define the \emph{multiplication morphism} of reflexive sheaves
	$\varphi_D\colon \mc{O}_X(L) \to \mc{O}_X(L + D)$ as follows.
	Recall that $\mc{O}_X(L + D)$ is equal to the reflexive sheaf 
    $(\mc{O}_X(L)\otimes \mc{O}_X(D))^{\vee\vee}$, i.e., the double dual of
    the tensor product $\mc{O}_X(L)\otimes \mc{O}_X(D)$.
    Tensoring up the canonical injection 
    $$i_D\colon \mc{O}_X \to \mc{O}_X(D)$$ by $\mc{O}_X(L)$,
    and then composing with the natural morphism of taking double dual,
    $$\alpha\colon \mc{O}_X(L)\otimes \mc{O}_X(D) \to (\mc{O}_X(L)\otimes \mc{O}_X(D))^{\vee \vee},$$
    we get the morphism $\varphi_D$, called the \emph{multiplication morphism induced by $D$}
    (or \emph{multiplication by $D$}, for simplicity):
    \[\xymatrix{
    \mc{O}_X(L)\ar[r]^{i_D\,\,\,\,\,\,\,\,\,\,\,\,\,\,\,\,}\ar@/_1.5pc/[rr]_{\varphi_D} & \mc{O}_X(D)\otimes \mc{O}_X(L)\ar[r]^{\,\,\,\,\,\,\,\,\,\,\alpha} & \mc{O}_X(L+ D).
    }\]
    Moreover, we denote by $\hg^i(\varphi_D)$ the induced morphism on the $i$-th cohomology:
    \[ \hg^i(\varphi_D)\colon \hg^i(X, \mc{O}_X(L)) \to \hg^i(X, \mc{O}_X(L+D)) \text{ for } i\ge 0. \]
\end{definitionnotation}


The following theorem is a generalisation of 
the original form of Koll\'ar's injectivity theorem.

\begin{theorem}[\protect{\cite[Corollary 3.3 (i)]{Ein:Popa08}}, \protect{\cite[Corollary 5.12 (b)]{EsV}}]\label{Ein:Popa}
	Let $L$ be an integral divisor on a smooth projective variety $X$,
	and let $\Delta = \sum_i \delta_i\Delta_i$ be a simple normal crossing $\QQ$-divisor
	with $0<\delta_i <1$ for all $i$.  Assume that $L-\Delta$ is nef and abundant 
	and that $B$ is an effective integral divisor such that $L-\Delta -\varepsilon B$
	is $\QQ$-effective for some rational number $0<\varepsilon <1$.  Then 
	the natural homomorphisms
	\[ \hg^i(X, \mc{O}_X( K_X + L))\to \hg^i(X, \mc{O}_X(K_X + L +B)) \]
	are injective for all $i\ge 0$. 
\end{theorem}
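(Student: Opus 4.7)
The plan is to reduce to a smooth projective surface $Y$ via a log resolution, split into cases by the b-Iitaka dimension of $\mathbf{M}$, apply Theorem~\ref{K:inj:gpair} when the b-Iitaka dimension is non-negative, and otherwise invoke the complex-analytic tools of Section~\ref{complex-a-geo}. The dimension-two hypothesis enters crucially only in the final, analytic step.

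First I would take a log resolution $\mu\colon Y \to X$ on which $(Y, B_Y)$ is sub-klt with SNC support and $\mathbf{M}$ descends to a nef $\QQ$-Cartier divisor $\mathbf{M}_Y$ satisfying $K_Y + B_Y + \mathbf{M}_Y = \mu^*(K_X + B + \mathbf{M}_X)$. Set $L_Y := \mu^* L + \lceil -B_Y \rceil$. Since $(Y, B_Y)$ is sub-klt and $\mu_* B_Y = B \ge 0$, every component of $B_Y$ with non-positive coefficient is $\mu$-exceptional, hence $\lceil -B_Y \rceil$ is effective and $\mu$-exceptional. Therefore $\mu_* \mc{O}_Y(L_Y) = \mc{O}_X(L)$ and $R^j \mu_* \mc{O}_Y(L_Y) = 0$ for $j > 0$, so the Leray spectral sequence reduces the desired injectivity on $X$ to the same injectivity on $Y$ for multiplication by $\mu^* D$ on $\mc{O}_Y(L_Y)$. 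A direct computation gives
\[ L_Y - K_Y - \{-B_Y\} - \mathbf{M}_Y = \mu^* A, \]
still semi-ample, with $\{-B_Y\}$ an SNC $\QQ$-divisor whose coefficients lie in $[0,1)$; the section of $\mc{O}_X(mA - D)$ pulls back to furnish the analogous hypothesis upstairs.

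If $\kappa(\mathbf{M}_Y) \ge 0$, then Theorem~\ref{K:inj:gpair} (and the finer Theorem~\ref{b-kodaira-non-negative} behind it) applies on $Y$; in the delicate subcase $\kappa(\mathbf{M}_Y) = 0 < \nu(\mathbf{M}_Y)$, I would write $\mathbf{M}_Y \sim_{\QQ} E$ for some effective $E$, perturb the boundary to $\{-B_Y\} + \varepsilon E$ for small $\varepsilon > 0$ preserving klt, and reduce to the ordinary Theorem~\ref{K-inj}. When $\kappa(\mathbf{M}_Y) = -\infty$, the divisor $\mathbf{M}_Y$ is nef with no effective multiple and, because $Y$ is a surface, $\mathbf{M}_Y^2 = 0$; no algebraic reduction of the above kind is available. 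Here I would equip $\mathbf{M}_Y$ with a singular Hermitian metric $h$ of semi-positive curvature current with vanishing Lelong numbers — available for any nef $\QQ$-line bundle on a smooth projective variety — so that the multiplier ideal $\mc{I}(h) = \mc{O}_Y$. Combined with the smooth semi-positive metric of $\mu^*A$ and the trivial multiplier ideal attached to the klt SNC boundary $\{-B_Y\}$, this supplies exactly the input required by a Fujino--Matsumura-style analytic injectivity theorem on the smooth surface $Y$ of the type developed in Section~\ref{complex-a-geo}; pulling the conclusion back through the first step finishes the proof.

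The principal obstacle is the last, analytic step: the injectivity theorem from Section~\ref{complex-a-geo} must be stated in a form that accommodates a sub-klt SNC boundary alongside a nef-but-not-abundant piece (in contrast to the nef-and-abundant hypothesis of Theorem~\ref{Ein:Popa}), and one must verify that the multiplier-ideal data on $Y$ are truly trivial so that the harmonic-theoretic argument computes the algebraic cohomology. The surface hypothesis is essential here: for a nef $\QQ$-divisor of numerical dimension $\le 1$ on a smooth projective surface one has strong structural control (via Zariski decomposition and the classification of nef bundles of low numerical dimension), and the corresponding $L^2$-estimates on K\"ahler surfaces close tightly, whereas the analogous statements in higher dimensions are not known without an abundance assumption on $\mathbf{M}$.
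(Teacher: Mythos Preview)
Your proposal does not address the stated theorem at all. Theorem~\ref{Ein:Popa} is the Ein--Popa/Esnault--Viehweg injectivity theorem: on an arbitrary smooth projective variety $X$ (of any dimension), with an integral Cartier divisor $L$ and an SNC $\QQ$-divisor $\Delta$ with coefficients in $(0,1)$ such that $L-\Delta$ is nef and abundant, multiplication by an effective Cartier $B$ with $L-\Delta-\varepsilon B$ $\QQ$-effective induces injections on $\hg^i(X,\mc{O}_X(K_X+L))$. There is no generalised pair, no nef part $\mathbf{M}$, no b-Iitaka dimension, and no surface hypothesis in this statement. The paper does not prove Theorem~\ref{Ein:Popa}; it is quoted as a known result from \cite{Ein:Popa08} and \cite{EsV} and used as a black box in the proof of Theorem~\ref{b-kodaira-non-negative}.

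What you have sketched is instead an outline for Theorem~\ref{inj-surface-b-iitaka} (injectivity for g-klt $\QQ$-g-pairs on projective surfaces). Even viewed as such, your outline diverges substantially from the paper's actual proof of that theorem: the paper does not invoke a Fujino--Matsumura analytic injectivity theorem with trivial multiplier ideals in the $\kappa(\mathbf{M}_Y)=-\infty$ case. Rather, it reduces (via Proposition~\ref{reduce-to-cartier} and Lemma~\ref{suff-large-m}) to a Cartier $L$ and a general $D$, passes to a smooth model $Y$ with $0\le B_Y<1$, lets $g\colon Y\to Z$ be the curve-fibration from the semi-ample $A$, and then shows the remaining injection by proving $\hg^1(Z,g_*\mc{O}_Y(L_Y))=0$. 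That vanishing is obtained by showing the vector bundle $g_*(\omega_{Y/Z}\otimes\mc{G}_Y)$ is ample on the curve $Z$, using the positivity-of-direct-images machinery (Proposition~\ref{pushf-positivity}) fed with singular metrics on $\mc{L}_{B_Y}$, $\mc{L}_{\mathbf{M}_Y}$, $\mc{L}_{\mathbf{A}_Y}$. Your proposed ``nef metric with vanishing Lelong numbers plus analytic injectivity'' shortcut is not what the paper does and would require an analytic injectivity statement that the paper neither states nor proves.
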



To state our results about injectivity theorems of generalised pairs 
in a more convenient way, we will use the following terminologies.

\begin{definition}
Let $(X, B + \mathbf{M})$ be a $\QQ$-g-pair,
and let $L$ be an integral $\mathbb{Q}$-Cartier divisor such that $A = L- (K_X+B+ \mathbf{M}_X)$
is semi-ample.  Let $D$ be an integral effective $\mathbb{Q}$-Cartier divisor 
such that $\hg^0(X, \mc{O}_X(mA-D))$ is nonzero for some $m\in \NN$
(in particular, $mA = m(L- (K_X + B + \mathbf{M}_X))$ is a Cartier divisor).
We say that \emph{the injectivity theorem holds for $\{(X, B + \mathbf{M}), L, D \}$}
if the homomorphisms of cohomologies induced by multiplication by $D$,
\[\hg^i(\varphi_D)\colon \hg^i(X, \mc{O}_X(L)) \to \hg^i(X, \mc{O}_X(L+D)),\]
are injective for all $i\ge 0$.
\end{definition}


\begin{definition}
	Let $(X, B + \mathbf{M})$ be a $\QQ$-g-pair.  We say that:
	\begin{itemize}
		\item \emph{The injectivity theorem holds for $\{(X, B + \mathbf{M}), L\}$}, where $L$ is 
		      an integral $\mathbb{Q}$-Cartier divisor such that $A = L- (K_X+B+ \mathbf{M}_X)$
              is semi-ample, if for any integral effective $\mathbb{Q}$-Cartier divisor $D$
              such that $\hg^0(X, \mc{O}_X(mA-D))$ is nonzero for some $m\in \NN$, the injectivity theorem
              holds for $\{ (X, B + \mathbf{M}), L, D \}$.
        \item \emph{The injectivity theorem holds for $(X, B + \mathbf{M})$} if the injectivity 
              theorem holds for any $\{(X, B + \mathbf{M}), L\}$ where $L$ is an integral
              $\QQ$-Cartier divisor such that $A = L- (K_X+B+ \mathbf{M}_X)$ is semi-ample.
	\end{itemize}
\end{definition}


The following result shows that to prove the injectivity theorem for the data
$\{(X, B + \mathbf{M}), L, D \}$ we can assume that $D$ is an integral effective Cartier divisor
in the linear system $\abs{m(L - (K_X + B + \mathbf{M}_X))}$ for sufficiently large $m\gg 0$.
The proof follows easily from the arguments in
\cite[Theorem 3.2, page 573]{Kawamata:pluri:system},
so we left it to readers.

\begin{lemma}\label{suff-large-m}
	Let $(X, B + \mathbf{M})$ be a $\QQ$-g-pair,
    and let $L$ be an integral $\mathbb{Q}$-Cartier divisor such that $A = L- (K_X+B+ \mathbf{M}_X)$
    is semi-ample.  Let $D$ be an integral effective $\mathbb{Q}$-Cartier divisor
    such that $\hg^0(X, \mc{O}_X(mA-D))$ is nonzero for some $m\in \NN$.
    If there exists an $n\in \NN$ such that the injectivity theorem holds for 
    $\{(X, B + \mathbf{M}), L, G \}$ where $G \in \abs{nm A}$ is an 
    integral effective Cartier divisor,
    then the injectivity theorem holds for $\{(X, B + \mathbf{M}), L, D \}$.
\end{lemma}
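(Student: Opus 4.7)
The plan is to factor the multiplication morphism $\varphi_D$ through multiplication by a larger, more convenient divisor $G$ in the complete linear system $\abs{nmA}$, and to deduce injectivity of $\hg^i(\varphi_D)$ from the assumed injectivity of $\hg^i(\varphi_G)$.

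First, I would use the hypothesis $\hg^0(X, \mcO_X(mA-D)) \ne 0$ to produce an effective integral divisor $D_0$ with $D_0 \sim mA - D$. By replacing $m$ with a sufficiently divisible positive multiple if necessary, we can further assume that $mA$ is an integral Cartier divisor (this is harmless, as the vanishing condition on $D$ is preserved by scaling $m$). Then, with $n\in \NN$ the integer supplied by the hypothesis of the lemma, I would set
$$G := nD + nD_0,$$
which is an effective integral Cartier divisor in $\abs{nmA}$ satisfying $G - D = (n-1)D + nD_0 \ge 0$. Since $G \sim nmA$, the data $\{(X, B + \mathbf{M}), L, G\}$ meets the hypotheses of the injectivity theorem (taking $k=nm$ gives $kA - G \sim 0$, so $\hg^0(X, \mcO_X(kA - G)) \ne 0$), and by the assumption of the lemma, $\hg^i(\varphi_G)$ is injective for every $i\ge 0$.

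The remaining point is the sheaf-level identity
$$\varphi_G \;=\; \varphi_{G-D} \circ \varphi_D \colon \mcO_X(L) \to \mcO_X(L+D) \to \mcO_X(L+G),$$
i.e.\ that multiplication by $D$ followed by multiplication by $G-D$ equals multiplication by their sum $G = D + (G-D)$. This is direct from the construction in Notation~\ref{multi-D}: both sides are obtained by tensoring $\mcO_X(L)$ with the canonical inclusions $\mcO_X \hookrightarrow \mcO_X(D)$ and $\mcO_X \hookrightarrow \mcO_X(G-D)$ and then reflexifying, and both agree with the reflexification of $\mcO_X(L)\otimes(\mcO_X\hookrightarrow \mcO_X(G))$ over the smooth locus of $X$, whose complement has codimension $\ge 2$; since $\mcO_X(L+G)$ is reflexive, the two morphisms coincide globally.

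Applying $\hg^i$ then yields $\hg^i(\varphi_G) = \hg^i(\varphi_{G-D}) \circ \hg^i(\varphi_D)$, and injectivity of the composition forces injectivity of its first factor $\hg^i(\varphi_D)$, as required. The main (though rather mild) obstacle is the bookkeeping of the factorization of multiplication morphisms on reflexive sheaves on the normal variety $X$; once this is in place the argument is essentially immediate.
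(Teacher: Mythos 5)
Your proposal is correct and follows essentially the same route as the paper: both take an effective $D_0$ (the paper's $D'$) with $D+D_0\in\abs{mA}$, set $G=n(D+D_0)\in\abs{nmA}$, and factor the multiplication morphism $\varphi_G$ through $\varphi_D$ at the level of reflexive sheaves, so injectivity of $\hg^i(\varphi_G)$ forces injectivity of $\hg^i(\varphi_D)$. Your justification of the factorisation via agreement on the smooth locus plus reflexivity is a fine substitute for the paper's appeal to the multiplication sequence and Kawamata's argument.
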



Now, we show that to prove the injectivity theorem for the data
$\{(X, B + \mathbf{M}), L, D \}$ we can assume further that $L$ is an integral Cartier divisor
(not only $\QQ$-Cartier).

\begin{proposition}[cf. \protect{\cite[Proposition 10.18]{Kol-Shafarevich}}]\label{covering}
	Let $(X, B + \mathbf{M})$ be a g-klt $\QQ$-g-pair.
	Let $L$ be an integral $\QQ$-Cartier divisor on $X$, and let $A$ be a $\QQ$-Cartier
	$\QQ$-divisor on $X$ such that
	\[ L = K_X + B + \mathbf{M}_X + A. \]
	Let $p\colon Y\to X$ be a log resolution of $(X, \floor{\supp B} + \floor{\supp L})$ 
	on which $\mathbf{M}$ descends to a nef $\QQ$-divisor 
	$\mathbf{M}_Y$.  Then there exist a Cartier divisor $L_Y$ on $Y$ 
	and a simple normal crossing divisor
	$B_Y$ on $Y$ such that
	\begin{itemize}
		\item [\emph{(1)}] $(Y, B_Y + \ol{\mathbf{M}_Y})$ is a g-klt $\QQ$-g-pair,
		\item [\emph{(2)}] $L_Y = K_Y + B_Y + \mathbf{M}_Y + p^* A$, 
		\item [\emph{(3)}] $p_*\mc{O}_Y(L_Y) = \mc{O}_X(L)$, and
		\item [\emph{(4)}] $R^ip_* \mc{O}_Y(L_Y) = 0$ for every $i>0$.
	\end{itemize}
\end{proposition}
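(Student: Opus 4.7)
The plan is to construct $L_Y$ via a careful integer rounding of $p^*L$ along the exceptional locus, so that all four properties can be deduced from the generalised discrepancy formula.

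On $Y$, the discrepancy identity
\[ K_Y + B_Y' + \mathbf{M}_Y = p^*(K_X + B + \mathbf{M}_X) \]
uniquely determines a $\QQ$-divisor $B_Y'$ all of whose coefficients are strictly less than $1$ by the g-klt hypothesis; on each $p$-exceptional prime $E_i$ I denote this coefficient by $c_i < 1$. Combining with $L = K_X + B + \mathbf{M}_X + A$ gives $p^*L = K_Y + B_Y' + \mathbf{M}_Y + p^*A$, and writing $p^*L = p_*^{-1}L + \sum_i \lambda_i E_i$ with $\lambda_i \in \QQ$ exposes the data to be rounded. The key observation is that the half-open interval $[\lambda_i - c_i,\ \lambda_i - c_i + 1)$, which has length exactly $1$ by the inequality $c_i < 1$, contains a unique integer $n_i$. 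I then set
\[ L_Y := p_*^{-1}L + \sum_i n_i E_i \qquad\text{and}\qquad B_Y := L_Y - K_Y - \mathbf{M}_Y - p^*A. \]
By construction $L_Y$ is an integral Weil divisor, hence Cartier on the smooth $Y$, and property (2) holds tautologically. A direct coefficient comparison using the formula $B_Y = L_Y - p^*L + B_Y'$ shows that $B_Y$ agrees with $p_*^{-1}B$ off the exceptional locus and has coefficient $n_i - \lambda_i + c_i \in [0,1)$ on each $E_i$; its support lies in $p_*^{-1}B \cup \mathrm{exc}(p)$, which is SNC by the log resolution hypothesis on $(X, \floor{\supp B} + \floor{\supp L})$. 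Thus $(Y, B_Y)$ is klt in the usual sense, and since $\mathbf{M}$ descends to the nef $\QQ$-Cartier divisor $\mathbf{M}_Y$ on $Y$, the pair $(Y, B_Y + \ol{\mathbf{M}_Y})$ is g-klt, proving (1).

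For (3), I check the two inclusions on local sections. Any rational function $f$ with $(f)+L \ge 0$ on $U \subseteq X$ satisfies $(f) + p^*L \ge 0$ on $p^{-1}(U)$, hence $v_{E_i}(f) \ge -\lambda_i$; since $v_{E_i}(f) \in \ZZ$ this upgrades to $v_{E_i}(f) \ge -\floor{\lambda_i}$, and a short case check (distinguishing $c_i \le 0$ from $0 < c_i < 1$) shows $n_i \ge \floor{\lambda_i}$ in every case, so $(f) + L_Y \ge 0$ on $p^{-1}(U)$. The reverse inclusion is immediate because the non-exceptional part of $L_Y$ is $p_*^{-1}L$. Finally, for (4), I write $L_Y - K_Y = B_Y + (\mathbf{M}_Y + p^*A)$: the first summand has SNC support with coefficients in $[0, 1)$, and the second is $p$-nef because $\mathbf{M}_Y$ is nef on $Y$ while $p^*A$ is $p$-trivial. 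Relative Kawamata--Viehweg vanishing for the birational morphism $p$ then gives $R^i p_*\mc{O}_Y(L_Y) = 0$ for all $i > 0$.

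The main obstacle is to choose $L_Y$ so that it is an integral Cartier divisor while, at the same time, the residual $B_Y$ is an effective SNC boundary with all coefficients in $[0, 1)$; these two constraints are a priori competing but become compatible precisely because the g-klt inequality $c_i < 1$ makes the relevant unit-length interval contain exactly one integer, thereby pinning down $n_i$ consistently. The mild sanity check $n_i \ge \floor{\lambda_i}$ is then automatic and secures the pushforward identity in (3).
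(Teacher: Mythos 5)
Your proof is correct and follows essentially the same route as the paper: your choice $n_i=\ceil{\lambda_i-c_i}$ is exactly the paper's coefficient $\floor{\lambda_i}+\ceil{-a_i+d_i}$ for $L_Y$, with $B_Y$ the residual divisor, the g-klt bound $c_i<1$ entering precisely to guarantee $L_Y\ge\floor{p^*L}$ up to effective exceptional divisors, and (4) deduced in both cases from relative Kawamata--Viehweg vanishing (nef over $X$ suffices, bigness over $X$ being automatic for the birational $p$). The only cosmetic difference is that you verify the pushforward identity (3) directly on sections, whereas the paper cites standard lemmas for the same fact.
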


\begin{proof}
    Let $\{E_i\}$ be the set of exceptional divisors of $p$.
	By assumptions, we can write
	\[ K_Y + p_*^{-1}B + \sum a_i E_i + \mathbf{M}_Y = 
	p^*(K_X + B + \mathbf{M}_X)  \]
	where $a_i < 1$ since $(X, B + \mathbf{M})$ is g-klt.  Define the rational numbers $d_i\ge 0$ by
	\[ p^* L = \floor{p^* L} + \sum d_i E_i. \]
	Set
	\begin{align*}
		L_Y &:= \floor{p^* L} + \sum \ceil{-a_i + d_i} E_i, \text{ and }\\
		B_Y &:= p_*^{-1}B + \sum \langle a_i - d_i \rangle E_i.
	\end{align*}
	It is easy to see that (1) and (2) are satisfied by construction.
	Condition (3) follows from \cite[Lemma 7.11]{Debarre_higher_dim_ag}  
	and \cite[Lemma II.2.11]{nakayama}.  Moreover, as $\mathbf{M}_Y$ is nef,
	\[ L_Y - (K_Y + B_Y) = \mathbf{M}_Y + p^* A \]
	is nef and big over $X$.  Then (4) follows from the relative 
	Kawamata-Viehweg vanishing theorem (see \cite[Theorem 1-2-5, Remark 1-2-6]{kmm}).
\end{proof}


To state the arguments in our proofs more conveniently, we summarise 
the properties of the log resolution in Proposition~\ref{covering}
in the following definition.

\begin{definition}\label{admissible-blowing-up}
	Let $(X, B + \mathbf{M})$ be a g-klt $\QQ$-g-pair.
	Let $L$ be an integral $\QQ$-Cartier divisor on $X$, and let $A$ be a $\QQ$-Cartier
	$\QQ$-divisor on $X$ such that
	$ L = K_X + B + \mathbf{M}_X + A$.
	Let $(Y, B_Y + \mathbf{N})$ be a g-klt $\QQ$-g-pair and $L_Y$ an integral 
	$\QQ$-Cartier divisor on $Y$.
	We say that $\{ (Y, B_Y + \mathbf{N}), L_Y \}$ is an \emph{admissible blowing up} 
	of $\{ (X, B + \mathbf{M}), L \}$ if there is a 
	birational contraction $p\colon Y\to X$ such that
	\begin{itemize}
		\item $L_Y = K_Y + B_Y + \mathbf{N}_Y + p^* A$, 
		\item $\mathbf{M}$ is isomorphic to $\mathbf{N}$ as b-divisors on $Y$,
		\item $p_*\mc{O}_Y(L_Y) = \mc{O}_X(L)$, and
		\item $R^ip_* \mc{O}_Y(L_Y) = 0$ for every $i>0$.
	\end{itemize}
\end{definition}


\begin{proposition}\label{reduce-to-cartier}
	Let $\mc{Q}$ be a collection of $\QQ$-g-pairs such that the set
	\[ \QCQ (\mc{Q}) := \bigg\{ \{ (X, B + \mathbf{M}), L\}\,\bigg|\,
	\begin{array}{c}
	    (X, B + \mathbf{M}) \in \mc{Q} 
	    \text{ and } L \text{ is an integral } \QQ\text{-Cartier divisor} \\
	    \text{such that } L - (K_X + B + \mathbf{M}_X) \text{ is semi-ample}
	\end{array}\bigg\}\]
	is closed under admissible blowing ups.  Moreover, denote by
	\[ \CQ (\mc{Q}) := \bigg\{ \{ (X, B + \mathbf{M}), L\}\in \QCQ(\mc{Q}) \,\bigg|\,
     L \text{ is an integral } \text{Cartier divisor} \bigg\}.\]
	Then the injectivity theorem holds for every $\{ (X, B + \mathbf{M}), L\}$ in $\QCQ (\mc{Q})$
	if and only if the injectivity theorem holds for every 
	$\{ (X, B + \mathbf{M}), L\}$ in $\CQ (\mc{Q})$.
\end{proposition}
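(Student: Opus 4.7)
The ``only if'' direction is immediate since $\CQ(\mc{Q}) \subseteq \QCQ(\mc{Q})$, so the plan focuses on the converse. Given $\{(X, B+\mathbf{M}), L\} \in \QCQ(\mc{Q})$ and an effective integral $\QQ$-Cartier $D$ with $\hg^0(X, \mc{O}_X(mA - D)) \neq 0$ for some $m$, where $A = L - (K_X + B + \mathbf{M}_X)$, the first reduction is to invoke Lemma~\ref{suff-large-m}: it suffices to prove injectivity for $\{(X, B+\mathbf{M}), L, G\}$ where $G$ is a genuine effective Cartier divisor lying in $|nmA|$ for some $n$. So from now on I may assume $D$ itself is an effective Cartier divisor with $D \sim mA$ and $mA$ Cartier.

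Next I apply Proposition~\ref{covering} to $\{(X, B+\mathbf{M}), L\}$ to produce a log resolution $p\colon Y \to X$ together with a g-klt $\QQ$-g-pair $(Y, B_Y + \mathbf{N})$ and an integral Cartier divisor $L_Y$ on $Y$ such that $\{(Y, B_Y + \mathbf{N}), L_Y\}$ is an admissible blowing up of $\{(X, B+\mathbf{M}), L\}$ in the sense of Definition~\ref{admissible-blowing-up}. In particular $L_Y - (K_Y + B_Y + \mathbf{N}_Y) = p^*A$ is semi-ample. By the hypothesis that $\QCQ(\mc{Q})$ is closed under admissible blowing ups, $\{(Y, B_Y + \mathbf{N}), L_Y\}$ lies in $\QCQ(\mc{Q})$, and since $L_Y$ is Cartier, it belongs to $\CQ(\mc{Q})$.

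Now set $D_Y := p^*D$, which is an effective Cartier divisor on $Y$ because $D$ is Cartier on $X$. From $D \sim mA$ I obtain $p^*D \sim m\,p^*A$, so $\hg^0(Y, \mc{O}_Y(m\,p^*A - D_Y)) \neq 0$, verifying the hypothesis of the injectivity statement for $\{(Y, B_Y + \mathbf{N}), L_Y, D_Y\}$. By the standing assumption on $\CQ(\mc{Q})$, the maps $\hg^i(\varphi_{D_Y})\colon \hg^i(Y, \mc{O}_Y(L_Y)) \to \hg^i(Y, \mc{O}_Y(L_Y + D_Y))$ are injective for all $i \geq 0$.

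The remaining step is to transfer this injectivity back down to $X$. Using $p_*\mc{O}_Y(L_Y) = \mc{O}_X(L)$ and $R^ip_*\mc{O}_Y(L_Y) = 0$ for $i > 0$ together with the projection formula (applied to the Cartier divisor $D$), one gets $p_*\mc{O}_Y(L_Y + D_Y) = \mc{O}_X(L+D)$ and $R^ip_*\mc{O}_Y(L_Y + D_Y) = 0$ for $i > 0$. The Leray spectral sequence then yields natural isomorphisms $\hg^i(Y, \mc{O}_Y(L_Y)) \cong \hg^i(X, \mc{O}_X(L))$ and $\hg^i(Y, \mc{O}_Y(L_Y + D_Y)) \cong \hg^i(X, \mc{O}_X(L+D))$. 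The main technical point, and the only one requiring real care, is that under these identifications the map $\hg^i(\varphi_{D_Y})$ corresponds to $\hg^i(\varphi_D)$; this follows because the canonical section $\mc{O}_X \to \mc{O}_X(D)$ pulls back to the canonical section $\mc{O}_Y \to \mc{O}_Y(p^*D)$, and the projection formula identifies the pushforward of the $Y$-side multiplication with the $X$-side multiplication after taking double duals as in Notation~\ref{multi-D}. Once this compatibility is recorded, injectivity on $X$ follows from injectivity on $Y$, completing the plan.
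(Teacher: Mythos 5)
Your proposal is correct and follows essentially the same route as the paper: reduce via Lemma~\ref{suff-large-m} to a Cartier divisor in the linear system $\abs{mA}$, pass to the admissible blowing up of Proposition~\ref{covering} (which lies in $\CQ(\mc{Q})$ by the closedness hypothesis), and transfer injectivity back through the Leray spectral sequence and the projection-formula compatibility of the multiplication morphisms. The only cosmetic difference is that you deduce $R^ip_*\mc{O}_Y(L_Y+D_Y)=0$ directly from the projection formula (since $D_Y=p^*D$ with $D$ Cartier) rather than rerunning the relative Kawamata--Viehweg argument, which is a harmless simplification.
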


\begin{proof}
	Assume that the injectivity theorem holds for every member in $\CQ (\mc{Q})$.
	Let $\{ (X, B + \mathbf{M}), L \}$ be an arbitrary member in $\QCQ (\mc{Q})$.
	Let $p\colon Y\to X$ be a log resolution such that $\{ (Y, B_Y + \ol{\mathbf{M}_Y}), L_Y \}$
	is an admissible blowing up as in Proposition~\ref{covering};
	in particular, $\ol{\mathbf{M}_Y}$ descends to a nef $\QQ$-divisor on $Y$, 
	and $L_Y$ is a Cartier divisor on $Y$.
	By Lemma~\ref{suff-large-m}, to show that the injectivity
	theorem holds for $\{(X, B + \mathbf{M}), L\}$, it suffices to take a Cartier divisor
	$G \in \abs{m(L-(K_X + B + \mathbf{M}_X))}$ for sufficiently large $m\gg 0$
	and to show that the injectivity theorem holds for $\{(X, B + \mathbf{M}), L, G\}$.
	
	Denote by $A_Y$ the semi-ample divisor $p^* A$.  Then
	$G_Y := p^* G$ is a Cartier divisor in the linear system $\abs{mA_Y}$.
	Consider the following commutative diagram of natural morphisms,
	\[\xymatrix{
	\mc{O}_X(L)\ar[r]^{\gamma}\ar[d]_{\varphi_G} & p_*\mc{O}_Y(L_Y)\ar[d]^{p_*(\varphi_{G_Y})} \\
	\mc{O}_X(L+G)\ar[r]_{\gamma_G\,\,\,\,\,\,\,\,} & p_*\mc{O}_Y(L_Y + G_Y)
	}\]
	where $\gamma$ is the isomorphism as in Proposition~\ref{covering} (3), 
	$\varphi_G$ and $\varphi_{G_Y}$ are the multiplication morphisms
	as in Notation~\ref{multi-D}, and
	$\gamma_G$ is obtained by tensoring up $\gamma$ with the invertible sheaf 
	$\mc{O}_X(G)$ by projection formula.
	Note that $\gamma_G$ is also an isomorphism. 
	Moreover, we also have
	\[ R^ip_* \mc{O}_Y(L_Y + G_Y) = 0 \text{ for every }i>0 \]
	by the same argument as in the proof of Proposition~\ref{covering}.
	Then we have the following induced commutative diagram of cohomologies
	for $i\ge 0$.
	\[\xymatrix{
	\hg^i(X, \mc{O}_X(L))\ar[r]^{\hg^i(\gamma)}\ar[d]_{\hg^i(\varphi_G)} & \hg^i(Y, \mc{O}_Y(L_Y))\ar[d]^{\hg^i(\varphi_{G_Y})} \\
	\hg^i(X, \mc{O}_X(L+G))\ar[r]_{\hg^i(\gamma_G)\,\,} & \hg^i(Y, \mc{O}_Y(L_Y + G_Y))
	}\]
	By assumption, $\hg^i(\varphi_{G_Y})$ is injective for every $i\ge 0$.
	On the other hand, by Leray spectral sequence, $\hg^i(\gamma)$ and $\hg^i(\gamma_G)$
	are isomorphisms (for Proposition~\ref{covering} (3)).  
	Therefore, $\hg^i(\varphi_G)$ is injective for
	every $i\ge 0$, that is, the injectivity theorem holds for $\{(X, B + \mathbf{M}), L, G\}$.
\end{proof}


\begin{lemma}\label{red-kod-dim-not-zero}
	Suppose that $(X, B+\mathbf{M})$ is a projective g-klt $\QQ$-g-pair. 
	Assume that $L$ is an integral $\mathbb{Q}$-Cartier
	divisor such that
	$A = L- (K_X+B+ \mathbf{M}_X)$
	is semi-ample.  If $\kappa (A) = 0$, then the injectivity theorem holds for $\{(X, B + \mathbf{M}), L\}$.
\end{lemma}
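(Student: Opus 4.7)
The plan is to observe that when $\kappa(A) = 0$ and $A$ is semi-ample, $A$ must be $\QQ$-linearly trivial; this in turn forces every admissible test divisor $D$ to be the zero divisor, after which the claimed injectivity is a tautology.

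For the first step, since $A$ is semi-ample, some $m_0 A$ with $m_0 \in \NN$ is Cartier and the linear system $\abs{m_0 A}$ is base point free, so it defines a contraction $\phi\colon X\to Y$ onto a normal projective variety $Y$ with $\dim Y = \kappa(A) = 0$. Hence $Y$ is a single point and $\mc{O}_X(m_0 A) \cong \phi^*\mc{O}_Y \cong \mc{O}_X$, i.e., $m_0 A \sim 0$. For the second step, pick any integral effective $\QQ$-Cartier divisor $D$ with $\hg^0(X, \mc{O}_X(mA - D)) \ne 0$ for some $m \in \NN$; after replacing $m$ by a multiple of $m_0$ we may assume $mA$ is Cartier. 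A nonzero section of this reflexive sheaf yields an effective Weil divisor $E \ge 0$ with $E \sim mA - D$, equivalently $D + E \sim mA$. Choosing a positive integer $n$ with $nmA \sim 0$, we obtain $nD + nE \sim 0$. But any effective principal divisor on the projective variety $X$ must vanish, since its defining rational function has no poles and is therefore a nonzero constant. Hence $nD + nE = 0$ and in particular $D = 0$.

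With $D = 0$, the multiplication morphism $\varphi_D\colon \mc{O}_X(L)\to \mc{O}_X(L+D) = \mc{O}_X(L)$ of Notation~\ref{multi-D} is the identity, so each induced map $\hg^i(\varphi_D)$ is the identity and a fortiori injective. There is no substantive obstacle in this lemma; its role is to isolate the degenerate case $\kappa(A) = 0$ so that in the main results one may assume $\kappa(A) \ge 1$ when carrying out the more delicate arguments involving b-abundance and admissible blowing ups.
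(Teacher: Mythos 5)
Your proof is correct and follows essentially the same route as the paper: both exploit that the semi-ample fibration of $A$ contracts $X$ to a point when $\kappa(A)=0$, so multiples of $A$ are linearly trivial, forcing the test divisor $D$ to vanish and making $\varphi_D$ an isomorphism. The only cosmetic difference is that the paper first reduces via Lemma~\ref{suff-large-m} to $D\in\abs{mA}$, whereas you show directly that every admissible $D$ is zero; both arguments are fine.
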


\begin{proof}
    By Lemma~\ref{suff-large-m}, it suffices to prove that the injectivity theorem holds
    for $\{ (X, B + \mathbf{M}), L, D \}$ where $D$ is an effective Cartier divisor
    in the linear system $\abs{mA}$ for sufficiently large $m\gg 0$.
	Let $f\colon X\to Z$ be the contraction induced by the semi-ample divisor $A$, so that 
	$A = f^* A_Z$ for an ample $\QQ$-divisor $A_Z$ on $Z$.
	By assumption, $\dim Z = \kappa (A_Z) = \kappa (A) = 0$ (see \cite[Theorem 2.1.27]{robpositivity1}),
	which implies that $D = 0$.  Thus, the multiplication morphism induced by $D$
	(see Notation~\ref{multi-D}) is an isomorphism, so the injectivity theorem holds.
\end{proof}

Because of Lemma~\ref{red-kod-dim-not-zero}, we assume that $\kappa (A) \ge 1$ in the rest of this paper.


\section{Elementary complex analytic geometry}\label{complex-a-geo}

The results in this section are well-known in complex analytic geometry.
To clarify the notions used in our arguments, 
we give some of the full definitions and sketch the proofs.
In particular, we give the most down-to-earth description of $\QQ$-line bundles
(and integral line bundles) for the convenience of local computations.
All varieties in this section are quasi-projective complex analytic varieties;
in particular, all complex manifolds we consider are K\"ahler.
We remark that the topology on complex analytic varieties is the analytic topology in this section.
However, the notions \emph{general fibres} and \emph{very general fibres}
are still defined in Zariski topology. 

We first recall the construction of (integral) line bundles on smooth complex varieties
(equivalently, complex manifolds).  Let $X$ be a smooth complex variety.
As an analytic variety, $\mc{O}_X$ is the sheaf of holomorphic functions on $X$.
By a \emph{vector bundle} on $X$, we mean a locally free coherent sheaf of finite rank on $X$.
Let $\mc{L}$ be a line bundle on $X$.  A \emph{trivialisation of $\mc{L}$}
consists of the data $\{ (U_i, \phi_i, g_{ij}) \}$, where $\{U_i\}$
is an open covering of $X$ such that $\mc{L}|_{U_i}$ is trivial on every $U_i$,
$\phi_i\colon \mc{L}|_{U_i} \to \mc{O}_{U_i}$ is an isomorphism of $\mc{O}_{U_i}$-modules,
and $\phi_i\circ \phi_j^{-1}$ is the multiplication by the nowhere vanishing holomorphic function 
$g_{ij}\in \mc{O}_X(U_i\cap U_j)$.  We call $\{g_{ij}\}$ the \emph{transition functions}
of the trivialisation.  Given two trivialisations $\{ (U_i, \phi_i, g_{ij})\}$
and $\{ (V_i, \lambda_i, h_{ij}) \}$, we can assume that the coverings $\{U_i\}$ and $\{V_i\}$
are the same up to taking refinements.  Then we say that the two trivialisations are
\emph{equivalent} if there is a collection $\{(U_i, \rho_i)\}$ of nowhere vanishing holomorphic functions
$\rho_i\in \mc{O}_X(U_i)$ such that the isomorphism $\lambda_i\circ \phi_i^{-1}$
on $\mc{O}_{U_i}$ is given by multiplication of $\rho_i$ and that 
$h_{ij} = \rho_i g_{ij} \rho_j^{-1}$ on $U_i\cap U_j$.
The equivalence classes of trivialisations on $X$ are one-to-one corresponding 
to the isomorphism classes of line bundles on $X$.


\subsection{$\QQ$-line bundles and $\QQ$-divisors}

\begin{definition}[\protect{\cite[Definition 2.1]{Kim2010}}]\label{pre-Q-line-bdl}
	Let $X$ be an integral complex variety.  A \emph{$\QQ$-line bundle} $\mc{L}$ on $X$
	is a collection of nowhere vanishing holomorphic functions $g_{ij}\in \mc{O}_X(U_i\cap U_j)$ 
	(called \emph{transition functions})
	on an open covering $\{U_i\}$ of $X$ such that there exists an integer $m\ge 1$
    and $\{(U_i, g_{ij}^m)\}$ defines a line bundle on $X$ (which we denote by $\mc{L}^{\otimes m}$).
    Given two $\QQ$-line bundles $\{(U_i, g_{ij})\}$ and $\{(V_i, h_{ij})\}$,
    up to taking refinements, we can assume that $\{U_i\}$ is equal to $\{V_i\}$.
    The two $\QQ$-line bundles are called \emph{equivalent} if 
    there exists $n\in \NN$ such that $\{ (U_i, g_{ij}^n) \}$ and 
    $\{ (U_i, h_{ij}^n) \}$ define isomorphic line bundles.
\end{definition}


\begin{definition}\label{Q-line-bdl}
	Let $X$ be an integral complex variety and $\mc{L}$
    a $\QQ$-line bundle on $X$.
    The \emph{index of $\mc{L}$} is the positive integer
    \[ m_{\mc{L}} := \min\{ m\in \NN\,|\,\mc{L}^{\otimes m}\text{ is a line bundle} \}. \]
    Similarly, let $D$ be a $\QQ$-divisor on $X$.
    The \emph{index of $D$} is the smallest positive integer $m_D$
    such that $m_D D$ is an integral Weil divisor.
\end{definition}

If $m_{\mc{L}} = 1$, the $\QQ$-line bundle $\mc{L}$ is just a line bundle in the usual sense,
which we will call an \emph{integral line bundle} in the rest of this paper.
For a $\QQ$-line bundle $\mc{L}$,
denote by $\NN (\mc{L})$ the set of all integers $m\in \NN$ such that $\mc{L}^{\otimes m}$
is an integral line bundle.  It is easy to see that 
$m_{\mc{L}}$ is equal to $\gcd \NN (\mc{L})$, the greatest common
divisor of integers in $\NN (\mc{L})$.
If $[\mc{L}]$ is an equivalence class of $\QQ$-line bundles, we denote by
$m_{[\mc{L}]}$ the minimum of indices $m_{\mc{L}'}$ with $\mc{L}'\in [\mc{L}]$.


For an integral complex variety $X$, we denote by $\divisor_{\QQ}(X)$ the group of 
$\QQ$-linearly equivalence classes of $\QQ$-divisors on $X$.
Let $D$ be a $\QQ$-divisor on $X$.
Denote by $\NN (D)$ the set of integers $m\in \NN$ such that $m D$
is an integral Weil divisor.
Then $m_D$ is equal to $\gcd \NN (D)$.
For a $\QQ$-divisor class $[D] \in \divisor_{\QQ} (X)$, we
denote by $m_{[D]}$ the minimum of indices $m_{E}$
such that $E\sim_{\QQ} D$.


\begin{lemma}\label{Q-div-Q-line-bdl}
    Let $X$ be a smooth complex variety.
    Denote by $\mc{P}$ the group of equivalence classes of $\QQ$-line bundles on $X$.
    Then there is an isomorphism of groups
    \[ \Xi\colon \divisor_{\QQ}(X) \to \mc{P} \]
    that preserves indices.
\end{lemma}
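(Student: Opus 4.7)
The plan is to construct $\Xi$ by reducing to the classical correspondence between integral Cartier divisors and integral line bundles on the smooth variety $X$, and then extending to the $\QQ$-level. Given a $\QQ$-divisor $D$ of index $m = m_D$, the divisor $m D$ is an integral Weil divisor, and hence Cartier since $X$ is smooth; the classical correspondence associates to it an integral line bundle $\mc{O}_X(m D)$ with a trivialisation $\{(U_i, \phi_i, f_{ij})\}$. I would then define $\Xi(D)$ to be the equivalence class of the $\QQ$-line bundle given by any trivialisation data $\{(U_i, g_{ij})\}$ with $g_{ij}^m = f_{ij}$. Two choices of $m$-th roots differ locally by $m$-th roots of unity, which are nowhere-vanishing holomorphic functions, so they yield equivalent $\QQ$-line bundles in the sense of Definition~\ref{pre-Q-line-bdl}. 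The same argument handles refinements of the open cover and the choice of $\phi_i$, so $\Xi(D)$ is well defined as an element of $\mc{P}$.

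The next step is to show that $\Xi$ descends to a group homomorphism on $\divisor_{\QQ}(X)$. If $D \sim_{\QQ} D'$, then some $n(D-D')$ is principal; taking a common multiple $k$ of $n$, $m_D$, $m_{D'}$, we obtain $\mc{O}_X(k D) \cong \mc{O}_X(k D')$ as integral line bundles, and this isomorphism lifts (after refining and adjusting by $k$-th roots of unity) to an equivalence of the associated $\QQ$-line bundles. The group homomorphism property follows from the classical identity $\mc{O}_X(D_1) \otimes \mc{O}_X(D_2) \cong \mc{O}_X(D_1 + D_2)$ applied after clearing denominators. For bijectivity I would invert $\Xi$ explicitly: a $\QQ$-line bundle $\mc{L}$ of index $m$ gives an integral line bundle $\mc{L}^{\otimes m}$, which corresponds to an integral Cartier divisor $E$ on $X$, and $\frac{1}{m} E$ maps to $[\mc{L}]$. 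Injectivity: if $\Xi(D)$ is trivial as a $\QQ$-line bundle class, some $\mc{O}_X(kD)$ is trivial as an integral line bundle, so $kD$ is principal and $D \sim_{\QQ} 0$.

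Finally I would verify the preservation of indices, $m_{[D]} = m_{[\Xi(D)]}$. The key observation is that for a fixed representative $D$, an integer $k$ satisfies $k D \in \divisor(X)$ if and only if $\Xi(D)^{\otimes k}$ is an integral line bundle, essentially by construction of $\Xi$; hence $\NN(D) = \NN(\Xi(D))$ and so $m_D = m_{\Xi(D)}$ for every single representative. Passing to minima over equivalence classes, the inverse construction described above shows that for every $\mc{L}' \in [\Xi(D)]$ there is a representative $D' \in [D]_{\sim_{\QQ}}$ with $m_{D'} = m_{\mc{L}'}$, and conversely; the two sets of achievable indices therefore coincide, giving $m_{[D]} = m_{[\Xi(D)]}$.

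The main obstacle I anticipate is the bookkeeping in the index-preservation step: one must be careful that the equivalence of $\QQ$-line bundles in Definition~\ref{pre-Q-line-bdl} matches cleanly with $\QQ$-linear equivalence of divisors at the level of minimal-index representatives, not merely up to multiplication by a common clearing integer. Concretely, given $\mc{L}' \in [\Xi(D)]$ with $\mc{L}'^{\otimes n}$ integral for some small $n$, I must produce a specific $D' \sim_{\QQ} D$ with $n D'$ integral (rather than only some larger multiple), which requires matching the Cartier divisor of $\mc{L}'^{\otimes n}$ to $n D'$ on the nose. Once this matching is made explicit, the remainder of the proof is a formal extension of the classical $\picard$--$\divisor$ correspondence via $\otimes_{\ZZ} \QQ$.
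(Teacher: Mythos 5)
Your construction follows the same route as the paper's proof (extract $m$-th roots of the transition functions of $\mc{O}_X(m_D D)$ to define $\Xi$, invert by taking the divisor of $\mc{L}'^{\otimes m_{\mc{L}'}}$ divided by $m_{\mc{L}'}$, compare indices), but the very first step is taken for granted: you ``define $\Xi(D)$ by any data $\{(U_i,g_{ij})\}$ with $g_{ij}^m=f_{ij}$'' without explaining why such holomorphic roots exist. On an arbitrary trivialising cover the intersections $U_i\cap U_j$ need not be simply connected, and a nowhere-vanishing holomorphic function on a non-simply-connected open set need not admit a holomorphic $m$-th root (the coordinate function on $\CC\setminus\{0\}$ has no holomorphic square root). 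The paper's proof handles exactly this point by refining to a good cover in which every $U_i\cap U_j$ is diffeomorphic to $\RR^{2\dim X}$, hence simply connected, so each $f_{ij}$ has a holomorphic logarithm and therefore $m$-th roots. Your mention of ``refinements of the open cover'' only concerns well-definedness of the class, not existence of the roots, so as written the map $\Xi$ is not yet constructed; this is the one genuinely missing idea rather than bookkeeping.

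On index preservation, your intermediate claim that $kD$ is integral if and only if $\Xi(D)^{\otimes k}$ is an integral line bundle, hence $\NN(D)=\NN(\Xi(D))$ and $m_D=m_{\Xi(D)}$ for every single representative, is too strong. The chosen roots may satisfy the cocycle condition already at a level strictly dividing $m_D$ (this can happen whenever $\mc{O}_X(m_D D)$ admits a root in $\picard(X)$), and in any case the index of the equivalence class is a minimum over representatives, which can be strictly smaller than $m_D$; similarly, your inverse construction $D'=G/m_{\mc{L}'}$ only yields $m_{D'}\le m_{\mc{L}'}$, not equality, and the two ``sets of achievable indices'' need not literally coincide. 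What does hold, and what the paper actually proves, are the chains of inequalities $m_{[\mc{L}]}\le m_{\mc{L}}\le m_{[D]}\le m_D$ for the forward construction applied to a minimal-index representative, and $m_{[D']}\le m_{D'}\le m_{[\mc{L}']}$ for the inverse construction; combining them gives $m_{[D]}=m_{[\mc{L}]}$. Your final ``passing to minima'' step is the right idea and makes this part repairable, but the representative-level equalities you assert along the way are false as stated and should be replaced by these one-sided inequalities.
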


\begin{proof}
	As $X$ is a smooth complex variety, we view $X$
	as a complex manifold in the rest of the proof.
	First, let $D$ be a $\QQ$-divisor on $X$. 
	Then $m_D D$ is an integral Weil divisor.
	Since $X$ is smooth, the integral divisor $m_D D$ corresponds to
	an integral line bundle $\mc{F} := \mc{O}_X(m_D D)$.
	By taking refinements of the local coordinate charts of the complex manifold $X$,
	we can take an open covering $\{U_i\}_{i\in I}$ trivialising the line bundle $\mc{F}$
	such that every intersection $U_i\cap U_j$ is diffeomorphic to the Euclidean 
	space $\RR^{2\dim X}$ (see \cite[Theorem I.5.1, Corollary I.5.2]{Bott-Tu}).  
	In particular, the topological fundamental
	group $\pi_1(U_i\cap U_j)$ is trivial for any $i, j\in I$.
	Denote by $\{h_{ij}\}$ a set of transition functions of the isomorphism class $[\mc{F}]$ of 
	the line bundle $\mc{F}$
	with respect to the open covering $\{U_i\}$.
	Then every $h_{ij}$ is a holomorphic function taking values in $\CC\setminus\{0\}$.
	As $U_i\cap U_j$ is simply connected, there is a 
	holomorphic function $g_{ij}\in\mc{O}_X(U_i\cap U_j)$
	such that $g_{ij}^{m_D} = h_{ij}$.  By Definition~\ref{pre-Q-line-bdl},
	$\{(U_i, g_{ij})\}$ defines a $\QQ$-line bundle $\mc{L}$
	such that $m_{\mc{L}}\le m_D$.
	We set $\Xi (D) = [\mc{L}]$.
	
	Let $\{ (V_i, \gamma_{ij})\}$ be another set of transition functions of $[\mc{F}]$,
	so $\{(U_i, g_{ij})\}$ and $\{(V_i, \gamma_{ij})\}$ are equivalent.
	If $\mc{L}'$ is the $\QQ$-line bundle induced by $\{(V_i, \gamma_{ij})\}$,
	it is easy to see that $\mc{L}$ is equivalent to $\mc{L}'$, hence 
	$\Xi$ does not depend on the choice of transition functions of $[\mc{F}]$.
	Moreover, pick another $m\in \NN(D)$, which is a positive multiple of $m_D$.
	Taking transition functions of $\mc{F}^{m/m_D}$ shows that $\Xi$ 
	does not depend on the choice of $m\in \NN(D)$ neither.
	Additionally, let $E\in [D]$, and let $m\in \NN$ such that $m E \sim m D$.
	We have that $\Xi (D) = \Xi (E)$, so $\Xi$ depends only on the $\QQ$-divisor class of $D$.
	Thus, we can write $\Xi ([D]) = [\mc{L}]$.
	Furthermore, by taking $E\in [D]$ satisfying $m_E = m_{[D]}$, we see that
	\begin{equation}
		m_{[\mc{L}]}\le m_{\mc{L}}\le m_{[D]} \le m_D.\label{ineq-1}
	\end{equation}
	
	Conversely, let $\mc{L}'$ be a $\QQ$-line bundle on $X$ with index $m_{\mc{L}'}$,
	i.e., $m_{\mc{L}'} \mc{L}'$ is an integral line bundle.
	Denote by $G$ an integral divisor corresponding to $m_{\mc{L}'} \mc{L}'$.
	We set $[D']\in \divisor_{\QQ}(X)$ as the $\QQ$-divisor class $[G/m_{\mc{L}'}]$.
	Then it is easy to see that $\Xi ([D']) = [\mc{L}']$,
	so $\Xi$ is an isomorphism. 
	Moreover, notice that we can take $\mc{L}'$ such that $m_{\mc{L}'} = m_{[\mc{L}']}$,
	hence 
	\begin{equation}
		m_{[D']} \le m_{D'} \le m_{[\mc{L}']}. \label{ineq-2}
	\end{equation}
	So, $m_{[D]} = m_{[\mc{L}]}$
	for any $\Xi ([D]) = [\mc{L}]$ by \eqref{ineq-1}, \eqref{ineq-2}, 
    and by the fact that $\Xi$ is an isomorphism.
\end{proof}


\begin{definitionnotation}
    Let $X$ be a smooth complex variety.
	Because of Lemma~\ref{Q-div-Q-line-bdl} 
	we can denote by $\picard(X)_{\QQ} := \picard (X)\otimes_{\ZZ} \QQ$ the group of 
	equivalence classes of $\QQ$-line bundles on $X$.
\end{definitionnotation}


\subsection{Singular hermitian metrics on $\QQ$-line bundles}

Now, we collect some necessary notions about singular hermitian metrics.
The main references of this section are \cite{Kim2010} and \cite[Chapters D and E]{Schnell18}.
For technical notions that are not used in our proofs we will just refer 
the readers to the relevant literature.

\begin{definition}[cf. \protect{\cite[Definition 2.3]{Kim2010}}]\label{h-metric}
	Let $X$ be a complex manifold and $\mc{L} = \{(U_i, g_{ij})\}$ a $\QQ$-line bundle on $X$.
	A \emph{singular hermitian metric} $h$ of $\mc{L}$ is a collection of measurable functions 
	$\varphi_i \colon U_i  \to [-\infty, +\infty]$ such that 
	\begin{equation}
		e^{-\varphi_i} = \abs{g_{ij}}^{-2} e^{-\varphi_j} \text{ on }U_i\cap U_j.\label{hermitian-transition}
	\end{equation}
	We call the data
	$$(\mc{L}, h) := \{ (U_i, g_{ij}, \varphi_i) \} $$ 
	a \emph{hermitian $\QQ$-line bundle} on $X$.
	Denote by $\varphi$ the data $\{ (U_i, \varphi_i) \}$.
	We call $\varphi$ the \emph{local weight functions of $(\mc{L}, h)$} (or, \emph{of $h$}, for simplicity).
\end{definition}

\begin{lemmadef}\label{equi-h-metric}
	Let $X$ be a complex manifold.  Two hermitian $\QQ$-line bundles
    $$(\mc{L},h_{\mc{L}}) = \{(U_i, g_{ij}, \varphi_i)\} \text{ and } 
    (\mc{G}, h_{\mc{G}}) = \{(V_i, \lambda_{ij}, \psi_i)\}$$ are called \emph{equivalent}
    if (up to taking refinements so that $\{U_i\}$ is the same as $\{V_i\}$) 
    there are nowhere vanishing holomorphic functions $\rho_i\in \mc{O}_X(U_i)$
    such that 
    \begin{itemize}
        \item [(i)] $\psi_i = \varphi_i + \log \abs{\rho_i}^2$ for every $i$, and
    	\item [(ii)] $\lambda_{ij}^m = \rho_i^m g_{ij}^m \rho_j^{-m}$ on every $U_i\cap U_j$ 
    	      for some $m\in \NN$ such that 
    	      both $\mc{L}^{\otimes m}$ and $\mc{G}^{\otimes m}$ are integral line bundles.
    \end{itemize}
    Denote by $\Hpicard(X)_{\QQ}$ the group of equivalence classes of hermitian
    $\QQ$-line bundles on $X$.  Then by (ii), there is a homomorphism of groups
    \[ \mc{H}\colon \Hpicard(X)_{\QQ}\to \picard (X)_{\QQ} \text{ given by } 
    [(\mc{L}, h_{\mc{L}})] \mapsto [\mc{L}]. \] 
    In particular, for every $[\mc{L}]\in \picard (X)_{\QQ}$, we call an element
    in the fibre $\mc{H}^{-1}([\mc{L}])$, denoted by $h_{[\mc{L}]}$, a \emph{hermitian class
    of the $\QQ$-line bundle class $[\mc{L}]$}.
\end{lemmadef}


If $m_{\mc{L}} = 1$, then Definition~\ref{h-metric} agrees with the usual definition of singular
hermitian metrics on line bundles (see \cite[\S 13]{Schnell18}).
More precisely, suppose that $s$ is a local holomorphic section of $\mc{L}$.
We can assume that $s$ is defined on some open subset $U\subseteq X$
contained in one of the open subsets $U_i$ in Definition~\ref{h-metric}.
Let $f\in \mc{O}_X(U)$ be the image of $s$ under the trivialisation corresponding
to $\{(U_i, g_{ij})\}$.
Then the \emph{squared length of $s$} (with respect to the 
singular hermitian metric $h$) is given by
\[ \abs{s}_h^2 = \abs{f}^2 e^{-\varphi_i}. \]


Unlike integral line bundles, there is no well-defined notion of local holomorphic sections
for $\QQ$-line bundles.  Instead, we can evaluate the singular hermitian metric
of a $\QQ$-line bundle on its multi-valued sections.

\begin{definition}[\protect{\cite[Definition 2.2]{Kim2010}}]\label{m-sect}
	Let $X$ be a complex manifold and $\mc{L}$ a $\QQ$-line bundle on $X$ defined 
	by the data $\{(U_i, g_{ij})\}$ such that $\mc{L}^{\otimes m}$ is an integral line bundle 
	for some $m\in \NN$.  A \emph{multi-valued section} $s$ of $\mc{L}$
	is a collection of sections $f_i\in \mc{O}_X(U_i)$ such that
	\begin{equation}
		g_{ij}^m f_j^m = f_i^m \text{ for every }i,j.\label{h-metric-m-sect}
	\end{equation}
\end{definition}

Now, let $ (\mc{L}, h) = \{(U_i, g_{ij}, \varphi_i)\}$ be a hermitian 
$\QQ$-line bundle on a complex manifold $X$, and
let $s = \{(U_i, f_i)\}$ be a multi-valued section of $\mc{L}$.
Taking norms on \eqref{h-metric-m-sect} shows that 
\begin{equation}
	\abs{g_{ij}}^2 \abs{f_j}^2 = \abs{f_i}^2 \text{ for every }i,j.\label{transition-function}
\end{equation}
On every open subset $U_i$, we define the \emph{squared length of $s$} as
\[ \abs{s}_h^2 := \abs{f_i}^2 e^{-\varphi_i}. \]
By \eqref{hermitian-transition} and \eqref{transition-function}, $\abs{s}_h^2$
is well-defined on the whole $X$.
Similarly, for every open subset $U\subseteq X$ and for a multi-valued section $s_U$
of $\mc{L}|_U$, the squared length $\abs{s_U}_h^2$ is well-defined.


\subsection{Curvature and multiplier ideal sheaf of $\QQ$-line bundles}

For the definition of plurisubharmonic functions on complex manifolds, see \cite[\S 12]{Schnell18} 
and \cite[\S 2.9]{pluri-p-theory}.

\begin{definition}[cf. \protect{\cite[page 1441]{Kim2010}} and \protect{\cite[\S 13]{Schnell18}}]\label{cur-form}
    Let $X$ be a complex manifold, and let $(\mc{L}, h)$ be a hermitian
    $\QQ$-line bundle with local weight functions $\varphi = \{(U_i, \varphi_i)\}$.
    When $\varphi_i\in L_{\text{loc}}^1(U_i)$ for every $i$, 
    the \emph{curvature current} $\Theta_h(\mc{L})$ of $(\mc{L}, h)$
    is defined, in the sense of currents, by the formula
    \[ \Theta_h(\mc{L}) := \frac{\sqrt{-1}}{2\pi}\partial \ol\partial \varphi_i \text{ on each }U_i,  \]
    which is then a globally well-defined $(1,1)$-current on $X$.
    We say that $(\mc{L}, h)$ has \emph{semi-positive curvature} if $\Theta_h (\mc{L}) \ge 0$
    (in the sense of currents),
    which is equivalent to saying that the local weight functions $\varphi$ are plurisubharmonic
    (up to modifying $\varphi$ on a set of measure zero, see \cite[\S 13]{Schnell18}).
\end{definition}

If $m_{\mc{L}} = 1$ (i.e., $\mc{L}$ is an integral line bundle), then $\Theta_h (\mc{L})$
can be expressed in the following way: for every local holomorphic section $f$ of 
$(\mc{L}^{\vee}, h^{\vee})$ we have 
$$ \Theta_h (\mc{L}) = \frac{\sqrt{-1}}{2\pi} \partial \ol\partial \log \abs{f}_{h^{\vee}}^2, $$ 
where $h^{\vee}$ is the induced singular hermitian metric on the dual line bundle $\mc{L}^{\vee}$
(see \cite[Lemma 18.2]{Schnell18}).  Moreover, the cohomology class of
$\Theta_h(\mc{L})$ in $\hg^2(X, \CC)$ is the first Chern class $c_1(\mc{L})$
(see \cite[\S 13]{Schnell18} and \cite[\S 9.4.D]{robpositivity2}).


\begin{definition}[\protect{\cite[Definition 3.33]{Hor10}} and \protect{\cite[\S 9.3.D]{robpositivity2}}]\label{m-ideals}
	Let $$\varphi \colon \Omega\to [-\infty, +\infty)$$ be a plurisubharmonic 
	function on an open subset $\Omega\subseteq \CC^n$.
	Set 
	\[ \mc{I}(\varphi) := \{ f\in \mc{O}_{\Omega, x}\,|\, \abs{f}^2 e^{-\varphi} \in L_{\text{loc}}^1 \text{ near }x \}. \]
	Let $X$ be a complex manifold, and let $\mc{L}$ be a pseudo-effective line bundle
	endowed with a singular hermitian metric $h$ such that $\Theta_h (\mc{L}) \ge 0$.
	The \emph{(analytic) multiplier ideal sheaf} $\mc{I}(h)$ of $(\mc{L}, h)$ is defined at 
	a point $x\in X$ by $\mc{I}(\varphi)$, where $\varphi$ is a local weight
	function around $x$.  It is well-known that $\mc{I}(h)$ is a 
	coherent sheaf of ideals on $X$.
\end{definition}


\begin{lemma}\label{cur-m-equi-class}
	Let $X$ be a complex manifold.
	Let $(\mc{L}, h)$ and $(\mc{L}', h')$ be two equivalent hermitian $\QQ$-line bundles
	(see Definition-Lemma~\ref{equi-h-metric}).  Then $\mc{I}(h) = \mc{I}(h')$ and
	$\Theta_h(\mc{L}) = \Theta_{h'}(\mc{L}')$.  In other words, for a fixed class 
	$[\mc{L}]\in \picard (X)_{\QQ}$, the curvature and multiplier ideal sheaf 
	of $(\mc{L}, h)$ are uniquely determined 
	by its hermitian class $h_{[\mc{L}]}$.
\end{lemma}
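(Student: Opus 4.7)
The plan is to unwind Definition-Lemma~\ref{equi-h-metric}: if $(\mc{L}, h)$ and $(\mc{L}', h')$ are equivalent with local weight functions $\varphi_i$ and $\psi_i$ respectively, then (up to refining the covering) there are nowhere vanishing holomorphic functions $\rho_i\in \mc{O}_X(U_i)$ satisfying $\psi_i = \varphi_i + \log\abs{\rho_i}^2$ on each $U_i$. Both claimed invariances (of curvature current and of multiplier ideal) will then follow from two elementary properties of the correction term $\log\abs{\rho_i}^2$: it is pluriharmonic, and its exponential is locally bounded above and below by positive constants.

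First I would verify the curvature invariance. Since $\rho_i$ is holomorphic and nowhere zero, on any sufficiently small (e.g., simply connected) subset of $U_i$ one may choose a holomorphic branch of $\log \rho_i$, so that $\log\abs{\rho_i}^2 = \log\rho_i + \log\ol{\rho_i}$ is the sum of a holomorphic and an antiholomorphic function. In particular $\partial\ol\partial \log\abs{\rho_i}^2 = 0$ as currents, so
\[
\Theta_{h'}(\mc{L}') = \tfrac{\sqrt{-1}}{2\pi}\partial\ol\partial \psi_i = \tfrac{\sqrt{-1}}{2\pi}\partial\ol\partial \varphi_i = \Theta_h(\mc{L})
\]
on each $U_i$, and hence globally. (The transition data (ii) in Definition-Lemma~\ref{equi-h-metric} is exactly what is needed to ensure that the weight functions transform compatibly with both trivialisations, so no cocycle issue arises.)

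Next I would verify equality of multiplier ideals. Working locally at a point $x\in U_i$, a germ $f\in \mc{O}_{X,x}$ lies in $\mc{I}(\varphi_i)_x$ iff $\abs{f}^2 e^{-\varphi_i}$ is integrable on a neighbourhood of $x$. The identity
\[
\abs{f}^2 e^{-\psi_i} = \abs{\rho_i}^{-2}\cdot \abs{f}^2 e^{-\varphi_i}
\]
together with the fact that $\abs{\rho_i}^{\pm 2}$ is continuous and strictly positive in a neighbourhood of $x$, shows that the two integrability conditions are equivalent. Hence $\mc{I}(\psi_i)_x = \mc{I}(\varphi_i)_x$, and since this holds at every point, $\mc{I}(h) = \mc{I}(h')$ as sheaves of ideals.

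There is no genuine obstacle here; the only thing to be careful about is that the definition of equivalence in Definition-Lemma~\ref{equi-h-metric} relates the \emph{weight functions} directly by (i) (and not only their $m$-th powers via (ii)), so the pluriharmonic correction appears literally, not up to a rational multiple. Once that is observed, both statements reduce to the two standard facts recalled above: pluriharmonic functions contribute trivially to $\partial\ol\partial$, and nowhere vanishing holomorphic functions have locally bounded absolute values.
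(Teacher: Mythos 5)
Your proof is correct and follows essentially the same route as the paper: both use the relation $\psi_i = \varphi_i + \log\abs{\rho_i}^2$ to see that $\partial\ol\partial\log\abs{\rho_i}^2 = 0$ kills the difference in curvature, and the identity $e^{-\psi_i} = \abs{\rho_i}^{-2}e^{-\varphi_i}$ with $\abs{\rho_i}^{\pm 2}$ locally bounded to identify the multiplier ideals. Your write-up merely spells out the pluriharmonicity and local boundedness points that the paper leaves implicit.
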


\begin{proof}
	Assume that $(\mc{L}, h) = \{ (U_i, g_{ij}, \varphi_i)\}$ and 
	$(\mc{L}', h') = \{ (U_i, h_{ij}, \psi_i) \}$.  Then there are nowhere vanishing
	holomorphic functions $\rho_i\in \mc{O}_X(U_i)$ such that 
	$\psi_i = \varphi_i + \log \abs{\rho_i}^2$ for every $i$.  Thus, we have
	$\sqrt{-1}\partial\ol\partial \psi_i = \sqrt{-1}\partial\ol\partial \varphi_i$
	for every $i$, and so $\Theta_h(\mc{L}) = \Theta_{h'}(\mc{L}')$ 
	in the sense of currents.  The equality $\mc{I}(h) = \mc{I}(h')$
	follows from the relation $e^{-\psi_i} = e^{-\varphi_i} \cdot \abs{\rho_i}^{-2}$. 
\end{proof}


One of the important properties of analytic multiplier ideal sheaves is the following subadditivity 
property.

\begin{theorem}[\protect{\cite[Theorem 2.6]{subadditivity}}]\label{sub-add}
	Let $X$ be a complex manifold, and let $\phi, \psi$ be plurisubharmonic 
	functions on $X$.  Then
	\[ \mc{I}(\phi + \psi) \subseteq \mc{I}(\phi)\cdot \mc{I}(\psi). \]
	Moreover, if $\psi$ is plurisubharmonic and smooth, then $\mc{I}(\phi + \psi) = \mc{I}(\phi)$.
\end{theorem}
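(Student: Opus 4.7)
The plan is to reduce subadditivity to an $L^2$ extension problem on the product $X\times X$, following the Demailly--Ein--Lazarsfeld strategy. Let $p_1, p_2\colon X\times X \to X$ be the projections and let $\Delta\subset X\times X$ denote the diagonal, canonically identified with $X$. The plurisubharmonic function $\Phi := p_1^*\phi + p_2^*\psi$ on $X\times X$ restricts on $\Delta$ to $\phi + \psi$, so the idea is to analyse $\mc{I}(\Phi)$ on the product via Fubini and then transfer back to the diagonal.

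Since the statement is local, I would fix a point $x\in X$ and work on a small polydisc neighbourhood. Given $f\in \mc{O}_{X,x}$ with $|f|^2 e^{-(\phi+\psi)}\in L^1_{\text{loc}}$ near $x$, I would view $f$ as a holomorphic function on the germ $(\Delta, (x,x))$ and invoke the Ohsawa--Takegoshi $L^2$ extension theorem applied to the submanifold $\Delta\subset X\times X$ with the singular weight $\Phi$. This produces a holomorphic extension $\tilde F$ of $f$ on a neighbourhood $U\ni (x,x)$ satisfying an $L^2$ estimate $\int_U |\tilde F|^2 e^{-\Phi} \le C \int |f|^2 e^{-(\phi+\psi)}$; in particular, $\tilde F\in \mc{I}(\Phi)(U)$.

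The separation-of-variables step is next: I would expand $\tilde F(z, w) = \sum_{\alpha} u_\alpha(z)\, v_\alpha(w)$ as a two-variable convergent power series around $(x,x)$ on a product polydisc $V_1\times V_2$. Applying Fubini to the integrability condition $\int_{V_1\times V_2} |\tilde F|^2 e^{-p_1^*\phi-p_2^*\psi} < \infty$ and using orthogonality of the monomials (after suitable grouping), one concludes that $u_\alpha \in \mc{I}(\phi)_x$ and $v_\alpha\in \mc{I}(\psi)_x$ for every $\alpha$. Restricting $\tilde F$ back to the diagonal gives $f(z) = \tilde F(z,z) = \sum_\alpha u_\alpha(z)\, v_\alpha(z) \in \mc{I}(\phi)_x \cdot \mc{I}(\psi)_x$, establishing the first inclusion.

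For the \emph{moreover} part, smoothness of $\psi$ implies that $e^{-\psi}$ is locally bounded above and below by positive constants, so $\mc{I}(\psi) = \mc{O}_X$ and the subadditivity already proved gives $\mc{I}(\phi+\psi)\subseteq \mc{I}(\phi)\cdot \mc{O}_X = \mc{I}(\phi)$; the reverse inclusion is immediate since local boundedness of $e^{-\psi}$ means that $|f|^2 e^{-\phi}\in L^1_{\text{loc}}$ if and only if $|f|^2 e^{-(\phi+\psi)}\in L^1_{\text{loc}}$. I expect the main obstacle to be the quantitative $L^2$ extension step: one has to invoke a version of Ohsawa--Takegoshi allowing singular plurisubharmonic weights along the submanifold (the diagonal), and then verify carefully that the Fubini decomposition of $\mc{I}(\Phi)$ on a product polydisc truly identifies the factors as lying in $\mc{I}(\phi)$ and $\mc{I}(\psi)$ respectively. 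Once these two analytic inputs are in hand, everything else is routine bookkeeping with power series.
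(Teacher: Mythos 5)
This theorem is quoted in the paper without proof: it is Theorem 2.6 of the cited Demailly--Ein--Lazarsfeld reference, and your strategy --- pass to $X\times X$ with the weight $\Phi=p_1^*\phi+p_2^*\psi$, extend from the diagonal by Ohsawa--Takegoshi, decompose on the product, restrict back to the diagonal --- is exactly theirs. Your treatment of the \emph{moreover} part is correct (smoothness gives local two-sided positive bounds on $e^{-\psi}$, so $\mc{I}(\phi+\psi)=\mc{I}(\phi)$ directly, without even invoking subadditivity), and the Ohsawa--Takegoshi extension from the diagonal with the singular weight $\Phi$ is legitimate in the local polydisc setting.

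The genuine gap is in the separation-of-variables step as you describe it. If you expand $\tilde F(z,w)$ as a raw two-variable power series and group it as $\sum_\alpha z^\alpha v_\alpha(w)$, the $z$-factors are monomials, and there is no reason for a monomial to lie in $\mc{I}(\phi)_x$; Fubini plus orthogonality of monomials does not by itself produce a decomposition whose $z$-factors belong to $\mc{I}(\phi)$ and whose $w$-factors belong to $\mc{I}(\psi)$. What Demailly--Ein--Lazarsfeld actually do is expand $\tilde F$ with respect to an orthonormal basis of the weighted Bergman space $\bigl\{u \text{ holomorphic on } V_1 : \int_{V_1}\abs{u}^2e^{-\phi}<\infty\bigr\}$, whose elements genuinely lie in $\mc{I}(\phi)(V_1)$, with coefficients in the analogous space for $\psi$; this is their K\"unneth-type identity $\mc{I}(p_1^*\phi+p_2^*\psi)=p_1^{-1}\mc{I}(\phi)\cdot p_2^{-1}\mc{I}(\psi)$. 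Even granting that each term of the expansion lies in the product ideal, a second point remains: the expansion is an infinite series, and an ideal of $\mc{O}_{X\times X,(x,x)}$ is not a priori closed under infinite sums converging uniformly on compact sets, so concluding that $\tilde F$ (hence $f=\tilde F|_\Delta$) lies in the product ideal requires the closedness of coherent ideal subsheaves under local uniform or $L^2$ convergence (the ``strong Noetherian'' step in their paper). Your sketch defers both issues to ``routine bookkeeping with power series,'' but they are the real content of the product step; the $L^2$ extension across the diagonal, which you flag as the main obstacle, is by comparison the standard part.
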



\begin{lemma}\label{psh}
	Let $\varphi \colon \Omega\to [-\infty, +\infty)$ be a plurisubharmonic 
	function on a bounded open subset $\Omega\subseteq \CC^n$ such that $e^{-\varphi}$
	is integrable on $\Omega$.  Then for any $m\in \NN$, 
	$\frac{1}{m} \varphi$ is also a plurisubharmonic function and 
	$e^{-\frac{1}{m}\varphi}$ is integrable.
\end{lemma}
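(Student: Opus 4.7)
The lemma has two separate claims: plurisubharmonicity of $\frac{1}{m}\varphi$ and integrability of $e^{-\varphi/m}$. The first is a standard fact and reduces to the observation that the class of plurisubharmonic functions is a convex cone, i.e., closed under multiplication by non-negative real scalars; the defining sub-mean-value inequality for $\varphi$ immediately gives the corresponding inequality for $\frac{1}{m}\varphi$, and $\frac{1}{m}\varphi$ still takes values in $[-\infty, +\infty)$ and is upper semi-continuous. So the first claim is essentially automatic.

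The bulk of the proof is the integrability statement. My plan is to argue by a pointwise comparison. Split $\Omega$ into the measurable sets $\Omega_+ = \{x\in \Omega : \varphi(x) \ge 0\}$ and $\Omega_- = \{x\in \Omega : \varphi(x) < 0\}$. On $\Omega_+$ we have $-\varphi/m \le 0$, hence $e^{-\varphi/m}\le 1$. On $\Omega_-$ we have $-\varphi>0$, and since $m\ge 1$ this gives $-\varphi/m \le -\varphi$, so $e^{-\varphi/m}\le e^{-\varphi}$. Combining these two estimates yields the pointwise bound
\[
e^{-\varphi(x)/m} \le 1 + e^{-\varphi(x)} \quad \text{for a.e. } x\in \Omega.
\]
Integrating over $\Omega$ and using that $\Omega$ is bounded (so $|\Omega| < \infty$) and that $e^{-\varphi}\in L^1(\Omega)$ by hypothesis, we obtain
\[
\int_\Omega e^{-\varphi/m}\, d\lambda \le |\Omega| + \int_\Omega e^{-\varphi}\, d\lambda < \infty,
\]
which is the desired integrability.

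Alternatively, one could use H\"older's inequality with conjugate exponents $m$ and $m/(m-1)$ applied to $e^{-\varphi/m} = (e^{-\varphi})^{1/m}\cdot 1$, yielding the same conclusion from boundedness of $\Omega$ together with $e^{-\varphi}\in L^1(\Omega)$. There is no real obstacle here: both approaches are purely elementary measure-theoretic manipulations, and the lemma is really a formal consequence of the cone property of psh functions plus the trivial comparison of exponentials. The only point that needs a moment of care is handling the set where $\varphi = -\infty$, but this set has Lebesgue measure zero for a plurisubharmonic function (a standard fact), so it contributes nothing to either integral.
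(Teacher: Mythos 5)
Your proof is correct and follows essentially the same route as the paper: the paper likewise notes that $\tfrac{1}{m}\varphi$ is plurisubharmonic by definition and then splits $\varphi$ into its non-negative and negative parts, bounding $e^{-\varphi^+/m}\le 1$ (using boundedness of $\Omega$) and $e^{-\varphi^-/m}\le e^{-\varphi^-}$, which is exactly your pointwise bound $e^{-\varphi/m}\le 1+e^{-\varphi}$ phrased via a decomposition of $\Omega$. The H\"older variant you mention is a fine alternative but not needed.
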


\begin{proof}
    The function $\frac{1}{m}\varphi$ is plurisubharmonic by \cite[Definition 12.1]{Schnell18}.
	Denote by $\varphi^+$ and $\varphi^-$ be the non-negative and negative part of $\varphi$.
	As $\Omega$ is bounded, the integrability of $e^{-\frac{1}{m}\varphi}$
	follows from the facts that $0<e^{-\frac{1}{m}\varphi^+}\le 1$ and 
	$e^{-\frac{1}{m}\varphi^-}< e^{-\varphi^-}$.
\end{proof}


\begin{lemma}\label{fraction-metric}
    Let $X$ be a complex manifold and $\mc{L}$ a 
    $\QQ$-line bundle on $X$.
    Let $m$ be a positive integer such that $\mc{L}^{\otimes m}$ 
    is an integral line bundle.
    Then a singular hermitian metric $H$ of $\mc{L}^{\otimes m}$ with semi-positive curvature
    induces a singular hermitian metric $h$ of $\mc{L}$	
    such that $(\mc{L}, h)$ has semi-positive curvature.
    Moreover, we have: 
    \begin{itemize}
    	\item [\emph{(1)}] $\mc{I}(H)\subseteq \mc{I}(h)$, 
    	\item [\emph{(2)}] for every open subset $U\subseteq X$, 
    	      \[ \abs{s_U^{\otimes m}}_H^2 = \abs{s_U}_h^{2m} \]
    	      for every multi-valued section $s_U$ of $\mc{L}|_U$, and
    	\item [\emph{(3)}] the curvatures satisfy $$\Theta_h(\mc{L}) = \frac{1}{m} \Theta_H(\mc{L}^{\otimes m}). $$
    \end{itemize}
\end{lemma}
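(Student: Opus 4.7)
The plan is to construct $h$ by taking the $m$-th root of $H$ at the level of local weight functions. Concretely, I choose an open covering $\{U_i\}$ of $X$ together with transition functions $g_{ij}\in \mc{O}_X(U_i\cap U_j)$ representing $\mc{L}$, such that $\{g_{ij}^m\}$ is the cocycle of the integral line bundle $\mc{L}^{\otimes m}$ on which $H$ is defined, and write $H = \{(U_i, g_{ij}^m, \Phi_i)\}$. I then set $\varphi_i := \Phi_i/m$ and define $h := \{(U_i, g_{ij}, \varphi_i)\}$. The transition condition $e^{-\Phi_i} = |g_{ij}|^{-2m} e^{-\Phi_j}$ coming from Definition~\ref{h-metric} applied to $H$, after taking logarithms and dividing by $m$, yields exactly the required condition $e^{-\varphi_i} = |g_{ij}|^{-2} e^{-\varphi_j}$ for $h$ to be a singular hermitian metric on $\mc{L}$; moreover, a different choice of transition functions for $\mc{L}$ (which differ by $m$-th roots of unity) does not affect $|g_{ij}|$, so the construction is well-defined.

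For (3), the identity $\Theta_h(\mc{L}) = \tfrac{1}{m}\Theta_H(\mc{L}^{\otimes m})$ is immediate from Definition~\ref{cur-form} and the linearity of $\partial \ol\partial$, and semi-positivity of $\Theta_h(\mc{L})$ then follows since $m > 0$ (equivalently, the local weights $\varphi_i$ are plurisubharmonic because the $\Phi_i$ are). For (2), a multi-valued section $s_U = \{(U\cap U_i, f_i)\}$ of $\mc{L}|_U$ in the sense of Definition~\ref{m-sect} satisfies $g_{ij}^m f_j^m = f_i^m$, so the data $\{(U\cap U_i, f_i^m)\}$ is a genuine holomorphic section of $\mc{L}^{\otimes m}|_U$, namely $s_U^{\otimes m}$; consequently
\[ |s_U^{\otimes m}|_H^2 = |f_i^m|^2 \, e^{-\Phi_i} = \bigl(|f_i|^2 e^{-\varphi_i}\bigr)^m = |s_U|_h^{2m}. \]

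The remaining point (1), $\mc{I}(H) \subseteq \mc{I}(h)$, is the only step requiring a small analytic argument, and is what I expect to be the main (albeit mild) obstacle. Working near a point $x\in X$, take a germ $f\in \mc{I}(H)_x$, so $|f|^2 e^{-\Phi_i}$ is integrable on some small relatively compact ball $B$ around $x$. Since $\Phi_i$ is plurisubharmonic, it is locally bounded above on $B$ by some constant $M$ (shrinking $B$ if needed). Writing
\[ e^{-\Phi_i/m} = e^{-\Phi_i}\cdot e^{\Phi_i (m-1)/m} \le e^{M(m-1)/m}\cdot e^{-\Phi_i} \qquad \text{on } B, \]
we see that $|f|^2 e^{-\Phi_i/m}$ is dominated on $B$ by a constant multiple of $|f|^2 e^{-\Phi_i}$, hence is integrable on $B$; therefore $f\in \mc{I}(h)_x$. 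This upper bound is the only place where the plurisubharmonicity (rather than pure algebra) of $\Phi_i$ is used, and it is the analogue, in the presence of the holomorphic factor $|f|^2$, of the implication already recorded in Lemma~\ref{psh}.
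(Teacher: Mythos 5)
Your proposal is correct and follows essentially the same route as the paper: define $h$ by dividing the local weight functions of $H$ by $m$, check the transition condition, and read off (2) and (3) directly from the definitions. Your inline bound $e^{-\Phi_i/m}\le e^{M(m-1)/m}e^{-\Phi_i}$ using the local upper bound of the plurisubharmonic weight is just a direct substitute for the paper's appeal to Lemma~\ref{psh}, so there is no substantive difference.
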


\begin{proof}
    Suppose that $\mc{L} = \{(U_i, g_{ij})\}$.
	Then $\{ (U_i, g_{ij}^m) \}$ defines the integral line bundle $\mc{L}^{\otimes m}$.
	Taking refinements of the open covering $\{ U_i \}$ if necessary,
	we can assume that $H$ is given by local weight functions $\{ (U_i, \varphi_i) \}$;
	in particular, we have
	$ e^{-\varphi_i} = \abs{g_{ij}^m}^{-2} e^{-\varphi_j}, $
	which gives
	$ e^{-\frac{1}{m}\varphi_i} = \abs{g_{ij}}^{-2} e^{-\frac{1}{m}\varphi_j}$.
	Thus, $\{ (U_i, \frac{1}{m} \varphi_i) \}$ defines a singular hermitian metric $h$ on $\mc{L}$.
	By Lemma~\ref{psh}, the results on curvature and multiplier ideal sheaves 
	follow immediately from the construction of $h$.
\end{proof}


\begin{lemma}\label{sum-metric}
    Let $X$ be a smooth complex variety.
    Let $M$ and $N$ be pseudo-effective $\QQ$-divisors on $X$ such that
    \begin{itemize}
    	\item there are singular hermitian metrics $h_M$ and $h_N$ of semi-positive curvatures on the associated $\QQ$-line bundles $\mc{L}_M$ and $\mc{L}_N$ respectively, and 
    	\item $M+N$ is an integral divisor with associated integral line bundle $\mc{L}_{M+N}$.
    \end{itemize}
    Then $h_M\cdot h_N$ is a well-defined singular hermitian metric on the integral line bundle
    $\mc{L}_{M+N}$ satisfying
    \[ \Theta_{h_M\cdot h_N}(\mc{L}_{M+N}) = \Theta_{h_M}(\mc{L}_M) + 
    \Theta_{h_N}(\mc{L}_N) \ge 0 \]
    \[ \text{and }\,  \mc{I}(h_M\cdot h_N) \subseteq \mc{I}(h_M)\cdot \mc{I}(h_N). \]
\end{lemma}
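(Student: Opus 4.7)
The plan is to construct $h_M \cdot h_N$ locally by adding weight functions and then read off the curvature formula and the multiplier ideal inclusion from standard plurisubharmonic analysis. After passing to a common refinement $\{U_i\}$ of the trivialising open covers, I would write $\mc{L}_M = \{(U_i, g^M_{ij})\}$ with local weights $\varphi^M_i$ and $\mc{L}_N = \{(U_i, g^N_{ij})\}$ with local weights $\varphi^N_i$; the semi-positive curvature hypotheses mean both families of weights are plurisubharmonic (modulo modification on measure-zero sets, as in Definition~\ref{cur-form}).

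The candidate product metric has local weights $\varphi^M_i + \varphi^N_i$ on the $\QQ$-line bundle $\mc{L}_M \otimes \mc{L}_N$ defined by the transition functions $g^M_{ij} g^N_{ij}$. The transition identity of Definition~\ref{h-metric} follows at once by multiplying the identities for $h_M$ and $h_N$:
\[
e^{-(\varphi^M_i+\varphi^N_i)} = |g^M_{ij} g^N_{ij}|^{-2} e^{-(\varphi^M_j+\varphi^N_j)}.
\]
By the isomorphism $\Xi$ of Lemma~\ref{Q-div-Q-line-bdl}, $\mc{L}_M \otimes \mc{L}_N$ and $\mc{L}_{M+N}$ represent the same class in $\picard(X)_{\QQ}$; since $M + N$ is integral, this class contains the integral representative $\mc{L}_{M+N}$. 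Transporting the local weights via this equivalence (Definition-Lemma~\ref{equi-h-metric}), I obtain the desired metric $h_M \cdot h_N$ on the integral line bundle $\mc{L}_{M+N}$; by Lemma~\ref{cur-m-equi-class}, the curvature current and multiplier ideal sheaf are independent of the representative chosen.

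For the curvature, since the sum of plurisubharmonic functions is plurisubharmonic, applying $\tfrac{\sqrt{-1}}{2\pi}\partial\overline{\partial}$ to $\varphi^M_i + \varphi^N_i$ gives
\[
\Theta_{h_M\cdot h_N}(\mc{L}_{M+N}) = \Theta_{h_M}(\mc{L}_M) + \Theta_{h_N}(\mc{L}_N) \ge 0.
\]
For the multiplier ideal inclusion, I would apply the subadditivity theorem (Theorem~\ref{sub-add}) locally to $\varphi^M_i$ and $\varphi^N_i$ to obtain $\mc{I}(\varphi^M_i + \varphi^N_i) \subseteq \mc{I}(\varphi^M_i) \cdot \mc{I}(\varphi^N_i)$, which globalises to $\mc{I}(h_M \cdot h_N) \subseteq \mc{I}(h_M) \cdot \mc{I}(h_N)$.

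The main subtlety is the bookkeeping in the first step. One must carefully justify that the transition functions $g^M_{ij}$ of a $\QQ$-line bundle (chosen holomorphic $m$-th roots of the honest transition functions of $\mc{L}_M^{\otimes m}$) can be multiplied to give nowhere-vanishing holomorphic functions whose appropriate power defines a genuine integral line bundle, and to identify this with $\mc{L}_{M+N}$. This is handled via the homomorphism $\Xi$ of Lemma~\ref{Q-div-Q-line-bdl} together with Lemma~\ref{cur-m-equi-class}, so that the ambiguity of representative is harmless for the two invariants we actually need to compute.
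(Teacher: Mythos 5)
Your proposal is correct and takes essentially the same route as the paper: add the local weight functions, identify the resulting hermitian $\QQ$-line bundle on $\mc{L}_M\otimes\mc{L}_N$ with a metric on $\mc{L}_{M+N}$, read off curvature additivity, and use subadditivity (Theorem~\ref{sub-add}) for the multiplier ideal inclusion. The only point where the paper is more explicit is the ``transport'' step you defer to $\Xi$ and Definition-Lemma~\ref{equi-h-metric}: it takes the nowhere-vanishing functions $\mu_i$ comparing $g_{ij}^m\lambda_{ij}^m$ with $\rho_{ij}^m$, extracts $m$-th roots $\beta_i$ on a simply connected refinement, and replaces $\rho_{ij}$ by $\beta_i\rho_{ij}\beta_j^{-1}$ so that the summed weights satisfy the transition identity for $\mc{L}_{M+N}$ itself.
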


\begin{proof}
	We can assume that $\{ (U_i, g_{ij}, \phi_i) \}$ and $\{ (U_i, \lambda_{ij}, \psi_i) \}$
	are the data of the hermitian $\QQ$-line bundles $(\mc{L}_M, h_M)$ and 
	$(\mc{L}_N, h_N)$ respectively.  Taking $m\gg 0$ sufficiently large,
	we can assume that $\mc{L}_M^{\otimes m}$ and $\mc{L}_N^{\otimes m}$ are integral line bundles
	and that $h_M^{\otimes m}$ and $h_N^{\otimes m}$ are singular hermitian
	metrics on the integral line bundles $\mc{L}_M^{\otimes m}$ and $\mc{L}_N^{\otimes m}$ respectively.
	We can also assume that the integral line bundle 
	$\mc{L}_{M+N}$ is given by the data $\{(U_i, \rho_{ij})\}$.
	
	By assumption, the integral line bundles $\mc{L}_{M+N}^{\otimes m}$ 
	and $\mc{L}_M^{\otimes m} \otimes \mc{L}_N^{\otimes m}$
	are isomorphic, and hence the two collections of transition functions
	$\{ (U_i, \rho_{ij}^m) \}$ and $\{ (U_i, g_{ij}^m \lambda_{ij}^m) \}$ are equivalent.
	Then there is a collection of nowhere vanishing holomorphic functions $\{ (U_i, \mu_i) \}$ such that
	\[ g_{ij}^m\lambda_{ij}^m/\rho_{ij}^m = \mu_i \cdot \mu_j^{-1}. \]
	Taking refinements of the open covering $\{U_i\}$ if necessary, we can assume that
	every $U_i$ has trivial topological fundamental group (see \cite[Chapter I, \S 5]{Bott-Tu}), so
	there are holomorphic functions $\beta_i$ on every $U_i$ satisfying $\beta_i^m = \mu_i$.
	Replace the set of transition functions $\{ (U_i, \rho_{ij}) \}$ by $\{ (U_i, \beta_i \rho_{ij} \beta_j^{-1}) \}$.
	Then we have the equality
	\[ e^{-(\phi_i + \psi_i)} = \abs{\rho_{ij}}^{-2} e^{-(\phi_j+\psi_j)} \text{ for all } i,j, \]
	which shows that $\{ (U_i, \phi_i + \psi_i) \}$ gives a well-defined singular hermitian metric
	on $\mc{L}_{M+N}$ (denoted by $h_M\cdot h_N$).  Moreover, as $\sqrt{-1}\partial \ol\partial \phi_i \ge 0$
	and $\sqrt{-1}\partial \ol\partial \psi_i \ge 0$, it is easy to see that
	$\Theta_{h_M\cdot h_N}(\mc{L}_{M+N}) \ge 0$ by Definition~\ref{cur-form}.
	
	Since the local weight functions $\{ (U_i, \phi_i) \}$ and $\{ (U_i, \psi_i) \}$
	are plurisubharmonic, the inclusion of multiplier ideal sheaves $\mc{I}(h_M\cdot h_N) \subseteq \mc{I}(h_M)\cdot \mc{I}(h_N)$
	follows from Theorem~\ref{sub-add}.
\end{proof}


\subsection{Canonical hermitian class of $\QQ$-divisors}\label{cano-h-class-divisors}

Let $X$ be a smooth complex variety (viewed as a complex manifold)
and consider a $\QQ$-divisor $D = \sum_{k=1}^s a_k D_k$
on $X$, where $\{D_k\}$ are the prime components of $D$.
Fix an open covering $\{U_i\}$ of $X$ such that every $D_k$ is principal on every $U_i$.
Let $\alpha_{ki}\in \mc{O}_X(U_i)$ be a holomorphic function locally defining $D_k$ on $U_i$.
We call
\[ \varphi_D (U_i, \alpha_{ki}) := \sum_{k=1}^s a_k \log \abs{\alpha_{ki}}^2 \]
a \emph{local weight function associated to $D$} (with respect to the data $\{(U_i, \alpha_{ki})\}$).

Let $\Xi ([D])$ be the $\QQ$-line bundle class corresponding to $[D]$ (see 
Lemma~\ref{Q-div-Q-line-bdl}).  Pick $m\in \NN(D)$.
Set 
$\alpha_i := \prod_{k=1}^s \alpha_{ki}^{ma_k}$, 
a meromorphic function on $U_i$.
Note that $\alpha_i$ is the defining equation of $m D$ on $U_i$, hence
$\{ (U_i, \alpha_i/\alpha_j) \}$ defines the integral line bundle $\mc{O}_X(mD)$.
Taking $\{U_i\}$ as a good covering (see \cite[page 42]{Bott-Tu}), there are transition functions $g_{ij}\in \mc{O}_X(U_i\cap U_j)$
defining the class $\Xi ([D])$ such that $g_{ij}^m = \alpha_i/\alpha_j$.
Moreover, on the open subset $U_i$, the function $\varphi_D (U_i, \alpha_{ki})$ is given by
$\frac{1}{m}\log \abs{\alpha_i}^2$.  Therefore, we have that
\[ \exp(- \varphi_D (U_i, \alpha_{ki})|_{U_i}) = \abs{\alpha_i}^{-\frac{2}{m}} = 
	\abs{g_{ij}}^{-2} \abs{\alpha_j}^{-\frac{2}{m}} = 
	\abs{g_{ij}}^{-2}\exp(- \varphi_D (U_i, \alpha_{ki})|_{U_j}), \]
which means that
\[ (\mc{L}, h) := \{ (U_i, g_{ij}, \varphi_D (U_i, \alpha_{ki})|_{U_i}) \} \]
is a hermitian $\QQ$-line bundle whose image under $\mc{H}$ is $\Xi ([D])$ 
(see Definition-Lemma~\ref{equi-h-metric}).

On the other hand,
let $\{(U_i, \beta_{ki})\}$ be another set of defining equations of $\{D_k\}$ on $U_i$.
Set 
$\beta_i := \prod_{k=1}^s \beta_{ki}^{ma_k}$ and 
\[ \varphi_D (U_i, \beta_{ki}) := \sum_{k=1}^s a_k \log \abs{\beta_{ki}}^2. \]
Up to refining $\{U_i\}$, pick $\rho_{ij}\in \mc{O}_X(U_i\cap U_j)$ such that
$\rho_{ij}^m = \beta_i/\beta_j$.  Then it is easy to see that
\[ (\mc{L}', h') := \{ (U_i, \rho_{ij}, \varphi_D (U_i, \beta_{ki})|_{U_i}) \} \]
is a hermitian $\QQ$-line bundle equivalent to $(\mc{L}, h)$.
Therefore, the construction above gives a unique hermitian class of $\Xi ([D])$,
which we call the \emph{canonical hermitian class of $D$}.


\begin{remark}\label{eff-Q-div-h-metric}
    Let $X$ be a smooth complex variety and let
	$D = \sum_{k=1}^s a_k D_k$ be an effective $\QQ$-divisor on $X$.
	Let $(\mc{L}_D, h_D)$ be a hermitian $\QQ$-line bundle 
	representing the canonical hermitian class of $D$.
	Then $(\mc{L}_D, h_D)$ has semi-positive curvature.
    Indeed, this follows immediately from the construction above as
	the local weight function is plurisubharmonic (since $a_k\ge 0$ for every $1\le k\le s$).
\end{remark}


\subsection{Pullbacks and restrictions of singular hermitian metrics}

\begin{lemmadef}\label{pullback-curvature}
	Let $f\colon Y\to X$ be a morphism of complex manifolds and $(\mc{L}, h)$
	a hermitian $\QQ$-line bundle on $X$ given by the data
	$\{ (U_i, g_{ij}, \varphi_i) \}$, where $\varphi = \{ (U_i, \varphi_i) \}$ is
	the local weight functions of $h$.  Then the pullback data 
	$$\{ (f^{-1}(U_i), g_{ij}\circ f, \varphi_i\circ f) \}$$
	defines a hermitian $\QQ$-line bundle on $Y$, which we denote
	by $( f^*\mc{L}, f^*h )$.  If $f$ is a closed embedding, we write 
	$( f^*\mc{L}, f^*h )$ as $( \mc{L}|_Y, h|_Y )$.
	
	Moreover, we define the \emph{pullback curvature}
	$f^{\star}\Theta_h(\mc{L})$ to be the current locally given by
	\[ \frac{\sqrt{-1}}{2\pi}\partial\ol\partial (\varphi_i\circ f) \text{ on every } U_i. \]
	Then we have the equality
	\[ \Theta_{f^*h}(f^* \mc{L}) = f^{\star} \Theta_h(\mc{L}).\]
	Furthermore, if $(\mc{L}, h)$ has semi-positive curvature, then $( f^*\mc{L}, f^*h )$
	has semi-positive curvature too.
\end{lemmadef}

\begin{proof}
	It is clear that $\{ (f^{-1}(U_i), g_{ij}\circ f, \varphi_i\circ f) \}$
	is a hermitian $\QQ$-line bundle on $Y$.  The equality of curvatures follows
	immediately from the definition of $f^{\star}\Theta_h(\mc{L})$.
	Moreover, if $(\mc{L}, h)$ has semi-positive curvature, then $\varphi_i$
	is plurisubharmonic, hence $\varphi_i\circ f$ is also plurisubharmonic 
	(see \cite[Corollary 2.9.5]{pluri-p-theory}), which is equivalent to saying that
	$(f^*\mc{L}, f^* h)$ has semi-positive curvature.
\end{proof}
	

\begin{remark}
    Even in the setting of Definition-Lemma~\ref{pullback-curvature}, it is quite subtle
    to pull back a general current (or a distribution) from $X$ to $Y$, 
    see \cite[Chapters VI, VIII]{Hormander}.  However, the definition given in Definition-Lemma~\ref{pullback-curvature}
    is sufficient for the arguments in this paper.
    To avoid confusion with the notions given in \cite{Hormander}, we adopt the notation
    $f^{\star}\Theta_h(\mc{L})$, instead of $f^*\Theta_h(\mc{L})$, for the pullback curvature.
\end{remark}


\begin{lemma}\label{res-pullback-cano-metric}
    Let $f\colon Y\to X$ be a surjective morphism of smooth complex varieties, and $B$
    a $\QQ$-divisor on $Y$ that is snc over the generic point of $X$.
    Denote by $(\mc{L}, h)$ a hermitian $\QQ$-line bundle 
    representing the canonical hermitian class of $B$.
    Let $F$ be a general fibre of $f$.  Then the restriction $(\mc{L}|_F, h|_F)$
    is a hermitian $\QQ$-line bundle 
    representing the canonical hermitian class of the $\QQ$-divisor $B|_F$.
    
    Similarly, if $D$ is a $\QQ$-divisor on $X$
    and if $(\mc{L}_D, h_D)$ is a hermitian $\QQ$-line bundle representing 
    the canonical hermitian class of $D$, then
    $(f^*\mc{L}_D, f^* h_D)$ represents the canonical hermitian class of $f^* D$.
\end{lemma}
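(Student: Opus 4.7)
The plan is to verify both assertions by computing local weight functions with respect to a common atlas of local defining equations. Both claims reduce to the same unwinding of the construction of the canonical hermitian class of a $\QQ$-divisor recalled in Section~\ref{cano-h-class-divisors}.

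I would start with the restriction claim. Fix an open covering $\{U_i\}$ of $Y$ together with holomorphic functions $\alpha_{ki}\in \mc{O}_Y(U_i)$ that locally define the prime components $D_k$ of $B = \sum_k a_k D_k$; after refining $\{U_i\}$ to a simply-connected good cover one may choose transition functions $g_{ij}$ on $U_i\cap U_j$ representing the associated $\QQ$-line bundle $\mc{L}$ as in Section~\ref{cano-h-class-divisors}. Then a representative of the canonical hermitian class of $B$ is $\{(U_i, g_{ij}, \varphi_i)\}$ with $\varphi_i = \sum_k a_k \log|\alpha_{ki}|^2$. By hypothesis $B$ is snc over the generic point of $X$, so for a sufficiently general fibre $F$ of $f$ no component $D_k$ contains $F$ and $\supp B$ meets $F$ transversely; consequently $\alpha_{ki}|_{U_i\cap F}$ is a local defining equation of the prime divisor $D_k\cap F$ of $F$, and $B|_F = \sum_k a_k (D_k\cap F)$ as $\QQ$-divisors. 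Restricting the whole datum to $F$ yields $\{(U_i\cap F, g_{ij}|_{U_i\cap U_j\cap F}, \varphi_i|_{U_i\cap F})\}$, which on the one hand is the hermitian $\QQ$-line bundle $(\mc{L}|_F, h|_F)$ in the sense of Definition-Lemma~\ref{pullback-curvature}, and on the other hand is, by construction, a representative of the canonical hermitian class of $B|_F$.

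For the pullback claim, I would argue symmetrically on $X$. Choose $\alpha_{ki}\in \mc{O}_X(U_i)$ defining the components $D_k$ of $D = \sum_k a_k D_k$, and transition functions $g_{ij}$ realising the $\QQ$-line bundle $\mc{L}_D$. The elementary fact needed is that for the Cartier divisor $D_k$ the function $\alpha_{ki}\circ f\in \mc{O}_Y(f^{-1}(U_i))$ is a local equation of the Cartier pullback $f^*D_k$, with all ramification multiplicities absorbed into the divisor of zeros of $\alpha_{ki}\circ f$; hence $\sum_k a_k \log|\alpha_{ki}\circ f|^2$ is a local weight for the canonical hermitian class of $f^*D$. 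But this is precisely the local weight of $(f^*\mc{L}_D, f^*h_D)$ produced by Definition-Lemma~\ref{pullback-curvature}, with matching transition functions $g_{ij}\circ f$ obtained from the same compatibility relation $g_{ij}^m = \alpha_i/\alpha_j$ used in Section~\ref{cano-h-class-divisors}.

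The main technical nuisance, though not a deep obstacle, is the bookkeeping between the different choices of $m$-th roots and transition functions in the equivalence relation of Definition-Lemma~\ref{equi-h-metric}: one must verify that the canonical representative produced from the pulled-back (respectively restricted) defining equations is equivalent to, and not merely has the same local weights as, the hermitian class $(f^*\mc{L}_D, f^*h_D)$ (respectively $(\mc{L}|_F, h|_F)$). This is handled uniformly by fixing a single common integer $m\in \NN(D)$ for both constructions and extracting the same $m$-th root on each overlap of a simply-connected refinement of $\{U_i\}$, after which the two sets of transition data agree on the nose.
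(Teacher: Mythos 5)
Your proposal is correct and follows essentially the same route as the paper's proof: restrict (respectively pull back) the local defining equations of the components of the divisor, note that for a general fibre these restrictions define the components of $B|_F$ (generic smoothness plus the snc hypothesis over the generic point), and conclude that the restricted local weight functions realise the canonical hermitian class of $B|_F$ (respectively $f^*D$). Your extra bookkeeping with a fixed $m\in \NN(D)$ and a common simply-connected refinement is a slightly more explicit treatment of the equivalence of hermitian classes that the paper handles with the phrase ``up to equivalence,'' but it is not a different argument.
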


\begin{proof}
    Shrinking $X$ if necessary, we can assume that $B$ is an snc $\QQ$-divisor.
	Suppose that $B = \sum_{i=1}^s a_k B_k$ and that every $B_k$ has local defining
	equation $g_k$ on some open subset $U\subseteq Y$. 
	By generic smoothness, every connected component of $B_k|_F$ is a smooth prime divisor on $F$,
	and $B|_F = \sum_{i=1}^s a_k B_k|_F$ is an snc $\QQ$-divisor on $F$ 
	whose prime components have local defining equations $\{g_{k,F}\}$ on $U\cap F$.
	Here, $g_{k,F}$ denotes the image of $g_k$ under the morphism $\mc{O}_U\to \mc{O}_{U\cap F}$,
	which is a holomorphic function on $U\cap F$.
	So, the local weight functions 
	$$\varphi_{F} := \sum_{k=1}^s a_k \log\abs{g_{k,F}}^2 $$
	induce a hermitian $\QQ$-line bundle $(\mc{L}|_F, h_F)$ which
	represents the canonical hermitian class of $B|_F$ (see \S\ref{cano-h-class-divisors}).
	On the other hand, recall that the local weight functions of $(\mc{L}, h)$
	are given by 
	$\varphi_B := \sum_{k=1}^s a_k \log \abs{g_k}^2$, up to equivalence of $(\mc{L}, h)$.
	As $\varphi_F$ is the restriction of $\varphi_B$ to $F$, we conclude that
	the restricted hermitian metric $h|_F$ is equal to $h_F$.
	The assertion about pullback divisor and pullback metric 
	follows in the same way.
\end{proof}


\subsection{Singular hermitian metrics of vector bundles}

The notion of \emph{semi-positive curvature} of a hermitian vector bundle 
$(\mc{E}, h)$, a holomorphic vector bundle $\mc{E}$ endowed with a singular hermitian metric
on a complex manifold,
is similarly defined as for hermitian (integral) line bundles; 
we refer the readers to \cite[\S 2.3]{PT18} and \cite[\S 18]{Schnell18} for relevant definitions.
More generally, singular hermitian metrics and curvatures can also 
be defined for torsion free sheaves; for details, see \cite[\S 2.4]{PT18}.


Let $(\mc{E}, h)$ be a hermitian vector bundle on a complex manifold $X$.
Then $h$ induces a singular hermitian metric $h^{\vee}$ on the dual vector bundle $\mc{E}^{\vee} := \shomo_{\mc{O}_X} (\mc{E}, \mc{O}_X)$
(see \cite[Proposition 17.2]{Schnell18}).
Let $\mc{Q}$ be a quotient vector bundle of $\mc{E}$, i.e., there exists a surjective morphism
of (holomorphic) vector bundles $q\colon \mc{E}\to \mc{Q}$.
By dualising the morphism $q$, we obtain an injection of vector bundles $q^{\vee} \colon \mc{Q}^{\vee}\to \mc{E}^{\vee}$.
Then the restriction $h^{\vee}|_{\mc{Q}^{\vee}}$ is a singular hermitian metric on $\mc{Q}^{\vee}$, and so
$h_{\mc{Q}} := (h^{\vee}|_{\mc{Q}^{\vee}})^{\vee}$ is a singular hermitian metric on $\mc{Q}\simeq \mc{Q}^{\vee \vee}$.
As $\ker q$ is also a (holomorphic) vector bundle on $X$, 
the metric $h_{\mc{Q}}$ defined above is the same as the hermitian metric 
on quotient vector bundles as in \cite[page 78]{Griffiths-Harris}.
Moreover, if $(\mc{E}, h)$ has semi-positive curvature, then $(\mc{Q}, h_{\mc{Q}})$ also has 
semi-positive curvature (see \cite[Lemma 2.4.3]{PT18} and \cite[Lemma 18.2]{Schnell18}).


\subsection{Positivity of direct images}

Let $f\colon X\to Y$ be a contraction of two complex manifolds.
Let $\mc{L}$ be an integral line bundle on $X$ endowed with 
a singular hermitian metric $h$ with semi-positive curvature.
Denote by $Y_0\subset Y$ the maximal Zariski open subset over which $f$ is smooth
and $f_*(\omega_{X/Y}\otimes \mc{L})$ is locally free.
Then $f_*(\omega_{X/Y}\otimes \mc{L})|_{Y_0}$ admits a 
\emph{canonical $L^2$-metric with respect to $h$}.  
Moreover, this canonical $L^2$-metric extends as a singular 
hermitian metric $h_{X/Y}$ on the torsion free sheaf $f_*(\omega_{X/Y}\otimes \mc{L})$
with semi-positive curvature.  We say that $h_{X/Y}$ is the \emph{$L^2$-metric
on $f_*(\omega_{X/Y}\otimes \mc{L})$ with respect to $h$}.
The technical explicit construction of $h_{X/Y}$ will not be used in this paper, 
so we refer the readers to \cite[\S 3.2]{PT18} for more details.


\begin{proposition}[cf. \protect{\cite[Lemma 5.4]{CP17}}]\label{pushf-positivity}
	Let $p\colon X\to Y$ be a contraction of complex manifolds.
	Let $(\mc{G}, h_{\mc{G}})$ and $(\mc{L}, h_{\mc{L}})$ be hermitian integral line bundles 
	on $X$ and $Y$ respectively.  Denote by $\omega_{X/Y}$
	the relative dualising sheaf of $f$.
	We assume that the following requirements hold true.
	\begin{itemize}
		\item [\emph{(i)}] We have $\Theta_{h_{\mc{L}}}(\mc{L}) \ge 0$ and 
		      \[ \Theta_{h_{\mc{G}}}(\mc{G}) \ge \varepsilon_0 p^{\star}\Theta_{h_{\mc{L}}}(\mc{L}) \text{ on }X, \]
		      where $\varepsilon_0$ is a positive real number.
		\item [\emph{(ii)}] The direct image $p_*(\omega_{X/Y} \otimes \mc{G})$ is nonzero, and we have
		      \[ p_*(\omega_{X/Y}\otimes \mc{G} \otimes \mc{I}(h_{\mc{G}}))_y = p_*(\omega_{X/Y}\otimes \mc{G})_y \]
		      for a general closed point $y\in Y$.
	\end{itemize}
	Let $h_{X/Y}$ be the $L^2$-metric on the direct image sheaf $p_*(\omega_{X/Y}\otimes \mc{G})$ 
	with respect to $h_{\mc{G}}$.  Then for any vector bundle $\mc{E}$
	equipped with a surjection of sheaves
	\[ p_*(\omega_{X/Y}\otimes \mc{G})\to \mc{E}\to 0 \] 
	we have
	\[ \Theta_{\det h_{\mc{E}}} (\det \mc{E}) \ge \varepsilon_0 \Theta_{h_{\mc{L}}}(\mc{L}) \text{ on } Y, \]
	where $h_{\mc{E}}$ is the quotient singular hermitian metric
	induced by $h_{X/Y}$.
\end{proposition}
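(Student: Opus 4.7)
The plan is to deduce the claim from the positivity-of-direct-images theorem of Cao--P\u{a}un (\cite{CP17}), applied after twisting the metric on $\mc{G}$ so as to absorb the contribution of $p^*\mc{L}$. Concretely, I would define on $\mc{G}$ the singular hermitian metric $h'$ with local weight functions
\[
\varphi_i' := \varphi_i^{\mc{G}} - \varepsilon_0\, p^\star \varphi_i^{\mc{L}},
\]
that is, formally $h' = h_{\mc{G}}\cdot (p^* h_{\mc{L}})^{-\varepsilon_0}$. Although $\varepsilon_0$ is real and this twist need not define a genuine line bundle, everything we need from the positivity theorem depends only on the weight functions and on the multiplier ideals, so it is enough to work at that level. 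By hypothesis~(i), the curvature current
\[
\Theta_{h'}(\mc{G})=\Theta_{h_{\mc{G}}}(\mc{G})-\varepsilon_0\, p^\star \Theta_{h_{\mc{L}}}(\mc{L})
\]
is non-negative, so $(\mc{G},h')$ is pseudo-effective on $X$; moreover, since $-p^\star\varphi^{\mc{L}}$ is locally bounded near a general fibre of $p$ (away from the pluripolar unbounded locus of $\varphi^{\mc{L}}$ in $Y$), the multiplier ideal $\mc{I}(h')$ coincides with $\mc{I}(h_{\mc{G}})$ on such a neighbourhood, so hypothesis~(ii) transfers verbatim to $(\mc{G},h')$.

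Applying \cite{CP17} to $(\mc{G},h')$ then yields that the associated $L^2$-metric $h'_{X/Y}$ on $p_*(\omega_{X/Y}\otimes\mc{G})$ has Griffiths semi-positive curvature. A direct fibre-integration computation gives
\[
|u|^2_{h'_{X/Y}}(y)=\int_{X_y}|u|^2_{h_{\mc{G}}}\, e^{\varepsilon_0\, p^\star\varphi^{\mc{L}}}=e^{\varepsilon_0\varphi^{\mc{L}}(y)}\,|u|^2_{h_{X/Y}}(y),
\]
so $h'_{X/Y}=h_{X/Y}\cdot(h_{\mc{L}})^{-\varepsilon_0}$ on $Y$. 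Translating semi-positivity of $h'_{X/Y}$ through this identity produces the sheaf-level Griffiths bound
\[
\Theta_{h_{X/Y}}\bigl(p_*(\omega_{X/Y}\otimes \mc{G})\bigr)\ge \varepsilon_0\, \Theta_{h_{\mc{L}}}(\mc{L})\otimes \mathrm{Id}.
\]
This lower bound is inherited by the quotient metric $h_{\mc{E}}$ on $\mc{E}$ (by the usual curvature-decreasing property for quotients of Griffiths-positive hermitian sheaves), and passing to determinants, using $\mathrm{rk}\,\mc{E}\ge 1$ together with $\Theta_{h_{\mc{L}}}(\mc{L})\ge 0$ from~(i), yields the desired inequality for $\det\mc{E}$ with the metric $\det h_{\mc{E}}$.

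I expect the main obstacle to be the rigorous handling of the ``fractional'' twist by $\mc{L}^{-\varepsilon_0}$: one must either reformulate \cite{CP17} purely in terms of weight functions and pseudo-effective currents (rather than genuine line bundles), or approximate $\varepsilon_0$ by a sequence of rationals and pass to a limit of the resulting curvature currents. In either approach one has to verify throughout that the multiplier-ideal condition in~(ii) is preserved on a Zariski open subset of $Y$ after the twist, and that the identity $h'_{X/Y}=h_{X/Y}\cdot(h_{\mc{L}})^{-\varepsilon_0}$ of singular hermitian metrics extends across the locus where $p_*(\omega_{X/Y}\otimes \mc{G})$ fails to be locally free.
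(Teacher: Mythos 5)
Your proposal is correct and takes essentially the same route as the paper (which itself mirrors \cite{CP17} Lemma 5.4): the same twist of the weight of $h_{\mc{G}}$ by $\varepsilon_0\varphi_{\mc{L}}\circ p$, the same appeal to positivity of direct images for the twisted metric, verification that hypothesis (ii) survives the twist, comparison of the two $L^2$-metrics, and then quotient monotonicity plus the trace/determinant step using $\Theta_{h_{\mc{L}}}(\mc{L})\ge 0$ and $\operatorname{rank}\mc{E}\ge 1$. The ``fractional twist'' obstacle you flag at the end dissolves exactly as the paper handles it: since the desired curvature inequality is local on $Y$, one works over an open set $\Omega_0\subseteq Y$ trivialising $\mc{L}$, where the twist is merely the addition of the genuine plurisubharmonic function $\varepsilon_0\varphi_{\mc{L}}\circ p$ to the local weight of $h_{\mc{G}}$ (so $h_{0\mc{G}}=e^{\varepsilon_0\varphi_{\mc{L}}\circ p}\cdot h_{\mc{G}}$ is an honest singular metric on $\mc{G}|_{p^{-1}(\Omega_0)}$), and no fractional line bundle or rational approximation of $\varepsilon_0$ is ever needed.
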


\begin{proof}
	The proof is almost the same as the one of \cite[Lemma 5.4]{CP17}; we include the proof here
	for the convenience of the readers and (in order) to correct some typos in the proof of \cite[Lemma 5.4]{CP17}.\\
	
	\emph{Step 1}.
	Let $y_0\in Y$ be an arbitrary point, and let $\Omega_0\subset Y$ be an open subset 
	(in the analytic topology of complex manifolds) of $Y$ centred at $y_0$.
	Let $\Omega\subset p^{-1}(\Omega_0)$ be an open subset of $X$ contained in the 
	inverse image of $\Omega_0$.  Shrinking $\Omega$ if necessary, we can assume 
	that $h_{\mc{G}}$ on $\Omega$ is given by the local weight function $\varphi_{\mc{G}}$,
	and that $h_{\mc{L}}$ on $\Omega_0$ is given by $\varphi_{\mc{L}}$.
	By the conditions in (i), we have 
	\[ \sqrt{-1}\partial\ol\partial \varphi_{\mc{L}} \ge 0 \text{ and }
	\sqrt{-1}\partial\ol\partial \varphi_{\mc{G}}\ge \varepsilon_0 p^{\star}  \sqrt{-1} \partial\ol\partial \varphi_{\mc{L}} \text{ on } \Omega.\]
	We can write the local weight function $\varphi_{\mc{G}}$ of $h_{\mc{G}}$ as
	\[ \varphi_{\mc{G}} = (\varphi_{\mc{G}} - \varepsilon_0 \varphi_{\mc{L}} \circ p) + \varepsilon_0 \varphi_{\mc{L}} \circ p \]
	which is a sum of two plurisubharmonic functions (up to modification on a set of
	measure zero).
	
	By varying the open subset $\Omega$ inside $p^{-1}(\Omega_0)$, the local expressions
	$\varphi_{\mc{G}} - \varepsilon_0 \varphi_{\mc{L}} \circ p$ glue together as 
	local weight functions of a singular hermitian metric on the line bundle 
	$\mc{G}|_{p^{-1}(\Omega_0)}$, which we denote by
	\[ h_{0\mc{G}} := e^{\varepsilon_0 \varphi_{\mc{L}} \circ p} \cdot h_{\mc{G}}. \]
	As $\varphi_{\mc{G}} - \varepsilon_0 \varphi_{\mc{L}} \circ p$ is plurisubharmonic,
	we have that
	\[ \Theta_{h_{0\mc{G}}} (\mc{G}|_{p^{-1}(\Omega_0)}) \ge 0. \]
	Notice that condition (ii) is also satisfied by $h_{0\mc{G}}$.  
	Denote by $H_{X/Y}$ the $L^2$-metric with respect to $h_{0\mc{G}}$ 
	on $p_*(\omega_{X/Y}\otimes \mc{G})|_{\Omega_0}$.	Then
	\cite[Theorem 3.3.5]{PT18} shows that 
	\[ (p_*(\omega_{X/Y}\otimes \mc{G})|_{\Omega_0}, H_{X/Y}) \]
	has semi-positive curvature.\\
	
	\emph{Step 2}.
	Now, let $q\colon p_*(\omega_{X/Y}\otimes \mc{G}) \to \mc{E}$ be a quotient vector bundle of 
	rank $r$ of $p_*(\omega_{X/Y}\otimes \mc{G})$, and let
	$q_0$ be the restriction of $q$ on the open subset $\Omega_0$.
	We have the inclusion of dual sheaves
	\[ 0\to \mc{E}^{\vee}|_{\Omega_0} \to p_*(\omega_{X/Y}\otimes \mc{G})^{\vee}|_{\Omega_0}. \]
	Fix a local holomorphic frame $\sigma = \{ \sigma_1, \dots, \sigma_r \}$ of $\mc{E}^{\vee}|_{\Omega_0}$
	(up to shrinking $\Omega_0$ around $y_0\in Y$ if necessary).
	Then every local holomorphic section of $\mc{E}^{\vee}|_{\Omega_0}$ is of the form $\xi_1 \sigma_1 + \cdots + \xi_r \sigma_r$
	where every $\xi_i$ is a holomorphic function under the trivialisation
	of $\mc{E}^{\vee}|_{\Omega_0}$ induced by $\sigma$.
	Thus, every local holomorphic section of $(\det \mc{E})^{\vee}|_{\Omega_0}$
	can be identified with the holomorphic function $\xi = \xi_1\cdots \xi_r$.
	
	Denote by $H_{\mc{E}}$ (resp. $h_{\mc{E}}$) the quotient singular hermitian metric on $\mc{E}|_{\Omega_0}$
	induced by $H_{X/Y}$ (resp. by $h_{X/Y}$).
	Let $\mc{L}_i$ be the quotient line bundle $\mc{E}|_{\Omega_0}\to \mc{L}_i$ 
	induced by $\xi_i\sigma_i$.  Then we have the inclusions
	$\mc{L}_i^{\vee} \subseteq \mc{E}^{\vee}|_{\Omega_0}$ and 
	the quotients of sheaves
	\[ p_*(\omega_{X/Y}\otimes \mc{G})|_{\Omega_0} \to \mc{L}_i\to 0. \]
	By \cite[Lemma 18.2]{Schnell18}, $\mc{L}_i$ has semi-positive curvature 
	when endowed with the quotient singular hermitian metric induced by $H_{\mc{E}}$
	(which is the same as the quotient singular hermitian metric induced from $(p_*(\omega_{X/Y}\otimes \mc{G})|_{\Omega_0}, H_{X/Y})$).
	In other words, we have
	\[ \sqrt{-1} \partial \ol\partial \log \abs{\xi_i}_{H_{\mc{E}}^{\vee}}^2 = \sqrt{-1}\partial \ol\partial \log \abs{\xi_i}_{H_{X/Y}^{\vee}}^2 \ge 0,  \]
	which implies that 
	\[ \sqrt{-1}\partial \ol\partial \log \abs{\xi_i}^2_{h_{\mc{E}}^{\vee}} = \sqrt{-1}\partial \ol\partial \log \abs{\xi_i}_{h_{X/Y}^{\vee}}^2 \ge \varepsilon_0 \sqrt{-1} \partial \ol\partial \varphi_{\mc{L}} 
	\text{ for every }1\le i\le r. \]
	Therefore, we have 
	\[ \sqrt{-1}\partial \ol\partial \log \abs{\xi}^2_{\det h_{\mc{E}}^{\vee}} \ge r \varepsilon_0 \sqrt{-1} \partial \ol\partial \varphi_{\mc{L}}
	\ge \varepsilon_0 \sqrt{-1} \partial \ol\partial \varphi_{\mc{L}}, \]
	which is equivalent to saying that $\Theta_{\det h_{\mc{E}}} (\det \mc{E}) \ge \varepsilon_0 \Theta_{h_{\mc{L}}}(\mc{L})$ on $Y$.
\end{proof}


\section{Injectivity theorem for g-pairs with b-nef-and-abundant nef part}\label{b-abnd-results-sec}

The following result, its proof, and corollaries will be used
in the proof of Theorem~\ref{inj-surface-b-iitaka}.

\begin{theorem}\label{b-kodaira-non-negative}
    Suppose that $(X, B+\mathbf{M})$ is a projective g-klt $\QQ$-g-pair. 
	Assume that $L$ is an integral $\mathbb{Q}$-Cartier
	divisor such that
	$A = L- (K_X+B+ \mathbf{M}_X)$
	is semi-ample.  Denote by $\mathbf{A}$ the $\QQ$-b-divisor associated to $A$.
	Assume that one of the following conditions holds:
	\begin{itemize}
		\item [\emph{(1)}] $\mathbf{A} + \mathbf{M}$ is b-nef-and-abundant and $\mathbf{M}$ has b-Iitaka dimension $\ge 0$.
		\item [\emph{(2)}] $\mathbf{A} + \mathbf{M}$ descends to a nef and big divisor on some birational model of $X$.
	\end{itemize}
	Then the injectivity theorem holds for $\{(X, B + \mathbf{M}), L\}$.
\end{theorem}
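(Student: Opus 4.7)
The plan is to reduce to a smooth projective model where $L$ is Cartier and $\mathbf{M}$ descends to a nef $\QQ$-divisor, and then invoke the injectivity theorem of Ein--Popa/Esnault--Viehweg (Theorem \ref{Ein:Popa}). First I would apply Lemma \ref{red-kod-dim-not-zero} to assume $\kappa(A)\ge 1$, and then Proposition \ref{reduce-to-cartier} to the collection of g-klt $\QQ$-g-pairs satisfying (1) (respectively (2)). Since b-abundance and b-Iitaka dimension are intrinsic properties of the underlying b-divisors, and nef-and-abundance (respectively nef-and-bigness) is preserved under surjective pullbacks by Lemma \ref{pre:nef:abundant}(iii), this collection is closed under admissible blowing ups, so I may assume $L$ is integral Cartier. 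Passing to a further common log resolution via Proposition \ref{covering} (chosen so that $\mathbf{A}+\mathbf{M}$ descends to a nef and abundant, respectively nef and big, $\QQ$-Cartier divisor), I reduce to the situation where $X$ is smooth, $B$ is snc with coefficients in $[0,1)$, $\mathbf{M}_X$ is nef $\QQ$-Cartier, $L = K_X + B + \mathbf{M}_X + A$, and in addition either $A+\mathbf{M}_X$ is nef and abundant with $\mathbf{M}_X\sim_{\QQ} M'\ge 0$ (case (1)), or $A+\mathbf{M}_X$ is nef and big (case (2)).

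Next, by Lemma \ref{suff-large-m} combined with Bertini applied to the semi-ample linear system $\abs{mA}$ for $m\gg 0$ divisible, it is enough to establish injectivity of multiplication by a general member $G\in \abs{mA}$ chosen so that $G+\supp B$ is simple normal crossing. I then plan to feed this data into Theorem \ref{Ein:Popa} with ``$L$''$\,:=L-K_X$, ``$\Delta$''$\,:=B$ (prime components of coefficient $0$ are simply omitted, so the remaining coefficients satisfy $0<\delta_i<1$), and ``$B$''$\,:=G$. The nef-and-abundant condition $(L-K_X)-B = \mathbf{M}_X + A$ holds by case (1) directly, and in case (2) because nef and big implies nef and abundant. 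For the $\QQ$-effectivity condition, I choose $0<\varepsilon<1/m$ so that
\[ (L-K_X) - B - \varepsilon G \sim_{\QQ} \mathbf{M}_X + (1-\varepsilon m)A. \]
In case (1), the right-hand side is $\QQ$-effective via $\mathbf{M}_X\sim_{\QQ} M'\ge 0$ and semi-ampleness of $(1-\varepsilon m)A$. In case (2), Kodaira's lemma for big divisors writes $\mathbf{M}_X + A\sim_{\QQ} N+E$ with $N$ ample and $E\ge 0$, and $N-\varepsilon G$ remains ample for small enough $\varepsilon$, so $N+E-\varepsilon G$ is big and hence $\QQ$-effective. Theorem \ref{Ein:Popa} then yields the desired injectivity $\hg^i(X,\mc{O}_X(L))\to \hg^i(X,\mc{O}_X(L+G))$ for every $i\ge 0$.

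I expect the main obstacle to be the careful bookkeeping in the reduction step: one must check that the hypotheses in (1) and (2) are truly preserved through the composition of the blowing ups supplied by Propositions \ref{reduce-to-cartier} and \ref{covering}, and that the boundary $B_Y$ produced lies in the form required by Theorem \ref{Ein:Popa}, namely snc with all coefficients in $[0,1)$, which is exactly what the formula $B_Y := p_{*}^{-1}B + \sum \langle a_i-d_i\rangle E_i$ in Proposition \ref{covering} guarantees under the g-klt assumption. Once one lands in the smooth model with these properties, the remainder is a direct application of existing injectivity technology together with Kodaira's lemma.
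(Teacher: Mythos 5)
Your proposal follows essentially the same route as the paper's proof: reduce to $L$ Cartier via admissible blowing ups, pass to a log resolution on which $\mathbf{M}$ (hence $\mathbf{A}+\mathbf{M}$) descends, and apply Theorem~\ref{Ein:Popa} to the nef and abundant divisor $\mathbf{A}_Y+\mathbf{M}_Y$, with the required $\QQ$-effectivity of $\mathbf{A}_Y+\mathbf{M}_Y-\varepsilon G$ coming from $\kappa(\mathbf{M}_Y)\ge 0$ in case (1) and from Kodaira's lemma in case (2). The paper does exactly this, only phrasing the descent step through the explicit roundup $\lceil \phi^*L-B_Y\rceil$ and relative Kawamata--Viehweg vanishing on a fixed resolution rather than through a second invocation of Proposition~\ref{covering}; your version is fine provided you run the Leray-diagram argument from the \emph{proof} of Proposition~\ref{reduce-to-cartier} (not merely its statement) to transfer injectivity down from the blow-up, and note that in case (2) the relevant pullbacks are birational, so bigness is indeed preserved.

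One step as written is not justified and should be deleted: the claim that Lemma~\ref{suff-large-m} ``combined with Bertini'' reduces you to a \emph{general} member $G\in\abs{mA}$ with $G+\supp B$ simple normal crossing. Lemma~\ref{suff-large-m} reduces the statement for an arbitrary eligible $D$ to the statement for the specific divisor $G=n(D+D')$, which is non-reduced for $n\ge 2$ and in no sense general; and injectivity for a general member does not imply injectivity for special members, since the rank of the multiplication map $\hg^i(X,\mc{O}_X(L))\to \hg^i(X,\mc{O}_X(L+G_t))$, $t\in\PP(\hg^0(X,\mc{O}_X(mA)))$, is lower semicontinuous, so the failure locus is closed and generic injectivity says nothing about it — indeed the whole point of injectivity theorems is that they hold for every member. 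Fortunately this genericity is never used: Theorem~\ref{Ein:Popa} imposes no snc or generality condition on the multiplier divisor ``$B$'', only that it be effective Cartier with $L-\Delta-\varepsilon B$ $\QQ$-effective, and your computation $(L-K_X)-B-\varepsilon G\sim_{\QQ}\mathbf{M}_X+(1-\varepsilon m)A$ works verbatim for $G=n(D+D')$ (or its pullback to the resolution), exactly as in the paper. With the Bertini clause removed, the proof is correct.
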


\begin{proof}
By Proposition~\ref{reduce-to-cartier}, we can assume that $L$ is an integral Cartier divisor.
Moreover, by Lemma~\ref{suff-large-m}, it suffices to prove that the injectivity theorem holds
for $\{ (X, B + \mathbf{M}), L, D \}$ where $D$ is an effective Cartier divisor
in the linear system $\abs{mA}$ for sufficiently large $m\gg 0$.\\

\emph{Step 1}.
Assume that $\phi\colon Y\to X$ is a log resolution for the couple 
$$ (X, \floor{\supp B \cup \supp \mathbf{M}_X}) $$
such that the $\QQ$-b-divisor $\mathbf{M}$ descends to a nef 
$\QQ$-divisor $\mathbf{M}_Y$ on $Y$ (in particular, $\phi_* \mathbf{M}_Y = \mathbf{M}_X$).
Write
\begin{equation}
	K_Y + B_Y + \mathbf{M}_Y =\phi^*(K_X +B + \mathbf{M}_X).\label{discrep-equ-1}
\end{equation}
Rewrite \eqref{discrep-equ-1} as
\[ K_Y+\mathbf{M}_Y+ \phi^*(L-(K_X +B+\mathbf{M}_X)) = \phi^* L -B_Y, \]
and so 
\begin{equation}
	\ceil{\phi^*L - B_Y} = K_Y + \ceil{ \mathbf{M}_Y + \phi^*(L-(K_X +B+\mathbf{M}_X)) },\label{discrep-equ-2}
\end{equation}
as $K_Y$ is integral.
Since $(X, B+\mathbf{M})$ is g-klt, we have $-B_Y>-1$, 
which implies that $\ceil{-B_Y} \ge 0$ and $\ceil{-B_Y}$
is $\phi$-exceptional as $B$ is effective.  
The $\QQ$-divisor
\begin{equation}
 G_Y := \mathbf{M}_Y + \phi^*(L-(K_X +B+\mathbf{M}_X)) = 
 \mathbf{M}_Y + \phi^* A = \mathbf{M}_Y + \mathbf{A}_Y \label{GY-defn}
\end{equation}
is nef and big over $X$
since $\mathbf{M}_Y$ is nef$/X$ and since $\phi^*A$ is also semi-ample.
Thus, by the Kawamata-Viehweg vanishing theorem (see \cite[Theorem 1-2-3]{kmm}), we have 
\[ R^i\phi_*\mc{O}_Y(K_Y + \ceil{ G_Y})=0 \text{ for all } i>0.\]
Then the Leray spectral sequence gives that
\[ \hg^i(Y, \mc{O}_Y( K_Y +\ceil{G_Y})) = \hg^i(Y, \mc{O}_Y(\ceil{\phi^*L-B_Y}))
= \hg^i(X, \phi_* \mc{O}_X (\ceil{\phi^*L-B_Y})). \]
As $\phi^* L$ is Cartier, we have 
$$\ceil{\phi^* L - B_Y} = \phi^* L + \ceil{-B_Y}.$$
However, since the effective Cartier divisor $\ceil{ - B_Y}$ is $\phi$-exceptional,
we have 
$$\phi_*\mc{O}_Y(\phi^*L + \ceil{-B_Y}) = \phi_*\mc{O}_Y(\phi^*L) = \mc{O}_X(L)$$
by \cite[Lemma 7.11]{Debarre_higher_dim_ag} and \cite[Lemma II.2.11]{nakayama}.  
Therefore, we get the isomorphism
\begin{equation}
	\hg^i(Y, \mc{O}_Y(K_Y +\ceil{G_Y})) = \hg^i(X, \mc{O}_X(L)) 
	\text{ for all } i\ge 0.\label{iso-ch-1}
\end{equation}
Note that $G_Y + \phi^* D$
is also nef and big over $X$.  Thus, by the same argument as above, 
we also have the isomorphisms of cohomologies
\begin{equation}
	\hg^i(Y, \mc{O}_Y(K_Y +\ceil{ G_Y + \phi^*D}))
=\hg^i(X, \mc{O}_X(L + D)) \text{ for all } i\ge 0.\label{iso-ch-2}
\end{equation}
Also, note that $K_Y + \ceil{G_Y + \phi^* D}$ is equal to $K_Y + \phi^*D + \ceil{G_Y} $
since $D$ is a Cartier divisor.\\

\emph{Step 2}.
The $\QQ$-b-divisor $\mathbf{A} + \mathbf{M}$ descends to $Y$, and 
\[ \mathbf{A}_Y + \mathbf{M}_Y = \phi^*( L- (K_X + B +\mathbf{M}_X)) + \mathbf{M}_Y \]
is nef and abundant by assumptions (up to taking higher resolutions),
see \eqref{GY-defn}.
Denote by $\Delta$ the $\QQ$-divisor:
\[ \ceil{ \mathbf{M}_Y + \phi^*( L- (K_X + B +\mathbf{M}_X))} 
- (\mathbf{M}_Y + \phi^*( L- (K_X + B +\mathbf{M}_X))) \]
which by construction is a simple normal crossing $\QQ$-divisor of the form 
$\Delta = \sum_k \eta_k \Delta_k$ with $0<\eta_k < 1$ for all $k$.
Set
\begin{equation}
 \Gamma_Y := \ceil{G_Y} = \ceil{\mathbf{M}_Y + \phi^*( L- (K_X + B +\mathbf{M}_X))} = \ceil{\mathbf{M}_Y + \mathbf{A}_Y}, \label{GammaY-defn} 
\end{equation}
where $G_Y$ is defined in \eqref{GY-defn}.  We have
\[ \Gamma_Y - \Delta = \mathbf{M}_Y + \phi^*( L- (K_X + B +\mathbf{M}_X)) 
= \mathbf{M}_Y + \mathbf{A}_Y. \]

In case (1), by taking $Y$ as a high enough resolution, we can assume that 
$\mathbf{M}_Y + \mathbf{A}_Y$ is nef and abundant and that $\kappa (\mathbf{M}_Y)\ge 0$.
In particular, $\mathbf{M}_Y$ is $\QQ$-linearly equivalent to some effective divisor.
Recall that $D$ is an effective Cartier divisor in the linear system
$\abs{mA}$ for sufficiently large $m\gg 0$.
Thus, by taking a rational number $0< \varepsilon< 1$ sufficiently small, the divisor
\[ \Gamma_Y - \Delta - \varepsilon \phi^* D = \mathbf{M}_Y 
  + (\mathbf{A}_Y - \varepsilon \phi^* D)  \]
is $\QQ$-effective.  

In case (2), we can assume that $\mathbf{M}_Y + \mathbf{A}_Y$ is nef and big.  Thus,
there is an effective divisor $N$ on $Y$ such that for every $t\gg 0$ we can write
\[ \mathbf{M}_Y + \mathbf{A}_Y \sim_{\QQ} H_t + \frac{1}{t}N, \]
where $H_t$ is an ample $\QQ$-divisor.
Recall that $D\in \abs{mA}$ for some $m\gg 0$.
Taking a rational number $0<\varepsilon<1$ sufficiently small, $H_t - \varepsilon \phi^* D$
is also ample.  Then we also conclude that
\[ \Gamma_Y - \Delta - \varepsilon \phi^* D = \mathbf{M}_Y + \mathbf{A}_Y -\varepsilon \phi^* D \sim_{\QQ} (H_t - \varepsilon\phi^* D) + \frac{1}{t}N \]
is $\QQ$-effective.

Moreover, with the assumptions in (1) or (2), $\Gamma_Y - \Delta = \mathbf{M}_Y + \mathbf{A}_Y$
is nef and abundant.  Consequently,
we are now in the situation of Theorem~\ref{Ein:Popa}, which implies that
\[ \hg^i(Y, \mc{O}_Y(K_Y + \Gamma_Y)) \to \hg^i(Y, \mc{O}_Y(K_Y + \Gamma_Y + \phi^*D)) 
\text{ \,\,are injective for all }i\ge 0. \]
By \eqref{iso-ch-1}, \eqref{iso-ch-2}, and \eqref{GammaY-defn}, 
this is equivalent to saying that
\[ \hg^i(X, \mc{O}_X(L))\to \hg^i(X, \mc{O}_X(L + D)) \text{ \,\,are injective for all }i\ge 0. \qedhere \]
\end{proof}


\begin{corollary}[=Proposition~\ref{abundant-inj-g-pair-intro}]\label{abundant-inj-g-pair}
    Suppose that $(X, B+\mathbf{M})$ is a projective g-klt $\QQ$-g-pair. 
    Assume that $\mathbf{M}$ is b-nef-and-abundant.
    Then the injectivity theorem holds for $(X, B + \mathbf{M})$.
\end{corollary}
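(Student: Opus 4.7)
The goal is to reduce the corollary directly to case (1) of Theorem~\ref{b-kodaira-non-negative}. Given an arbitrary integral $\QQ$-Cartier divisor $L$ on $X$ such that $A := L - (K_X + B + \mathbf{M}_X)$ is semi-ample, I would verify the two hypotheses of Theorem~\ref{b-kodaira-non-negative}(1): namely that $\mathbf{A} + \mathbf{M}$ is b-abundant, and that $\mathbf{M}$ has b-Iitaka dimension $\ge 0$. Once these are checked for every admissible $L$, the conclusion ``the injectivity theorem holds for $(X, B + \mathbf{M})$'' follows by the very definition of the statement.

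For the first condition, I would proceed as follows. By hypothesis, $\mathbf{M}$ descends to a birational model $Y' \to X$ on which $\mathbf{M}_{Y'}$ is nef and abundant. Taking a common higher model, I may choose a birational morphism $\mu \colon Y \to X$ factoring through $Y'$ on which both $\mathbf{M}$ and $\mathbf{A}$ descend; here $\mathbf{A}$ descends to $Y$ automatically because $\mathbf{A}$ is the $\QQ$-Cartier closure of $A$ on $X$, giving $\mathbf{A}_Y = \mu^* A$. Since $A$ is semi-ample, it is in particular nef and abundant (a semi-ample divisor has coinciding Iitaka and numerical dimensions), so by Lemma~\ref{pre:nef:abundant}(iii) the pullback $\mu^* A = \mathbf{A}_Y$ is nef and abundant on $Y$. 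Lemma~\ref{pre:nef:abundant}(ii) then gives that $\mathbf{A}_Y + \mathbf{M}_Y$ is nef and abundant on $Y$, which by Definition~\ref{b-abundant} says precisely that $\mathbf{A} + \mathbf{M}$ is b-abundant.

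For the second condition, I would simply observe that $\nu(\mathbf{M}_{Y'}) \ge 0$ for any nef $\QQ$-divisor by Definition~\ref{abundant:divisor}, so the abundance equality $\kappa(\mathbf{M}_{Y'}) = \nu(\mathbf{M}_{Y'}) \ge 0$ immediately gives $\mathbf{M}$ b-Iitaka dimension $\ge 0$ in the sense of Definition~\ref{b-iitaka}. (If $\kappa(A) = 0$, one could alternatively invoke Lemma~\ref{red-kod-dim-not-zero} and be done without ever entering Theorem~\ref{b-kodaira-non-negative}, but the argument above handles both cases uniformly.)

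I do not anticipate a serious obstacle here. The proof is essentially a bookkeeping exercise that combines the b-abundance of $\mathbf{M}$ with the elementary fact that semi-ample divisors are abundant, via the additivity property in Lemma~\ref{pre:nef:abundant}(ii). The only mildly delicate point is to make sure that $\mathbf{M}$ and $\mathbf{A}$ are compared on a common birational model before taking their sum, which is a standard manipulation in the language of b-divisors.
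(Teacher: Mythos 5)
Your argument is correct and is essentially the paper's own proof: both verify the two hypotheses of Theorem~\ref{b-kodaira-non-negative}(1) by using Lemma~\ref{pre:nef:abundant} (parts (ii) and, implicitly in the paper, (iii)) to see that $\mathbf{A}+\mathbf{M}$ is b-abundant, and by noting that abundance of $\mathbf{M}_{Y}$ forces $\kappa(\mathbf{M}_{Y})=\nu(\mathbf{M}_{Y})\ge 0$, hence non-negative b-Iitaka dimension. Your write-up merely spells out the common-model bookkeeping a bit more explicitly than the paper does; no substantive difference.
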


\begin{proof}
    Assume that $L$ is an integral $\mathbb{Q}$-Cartier
	divisor such that
	$A = L- (K_X+B+ \mathbf{M}_X)$
	is semi-ample.  Denote by $\mathbf{A}$ the $\QQ$-b-divisor associated to $A$.
	Since $\mathbf{M}$ is b-nef-and-abundant, $\mathbf{A} + \mathbf{M}$ is also b-nef-and-abundant
	by Lemma~\ref{pre:nef:abundant} (ii).
	Moreover, suppose that $\mathbf{M}$ descends to a nef and abundant $\QQ$-divisor
	$\mathbf{M}_Y$ on a birational model $Y$ of $X$.
	As $\mathbf{M}_Y$ is abundant, $\kappa (\mathbf{M}_Y) \ge 0$.
	Thus, the injectivity theorem holds for $\{ (X, B + \mathbf{M}), L \}$ 
	by Theorem~\ref{b-kodaira-non-negative} (1).
\end{proof}


Note that Corollary~\ref{abundant-inj-g-pair} could also be derived quickly by varying
the g-klt $\QQ$-g-pair $(X, B + \mathbf{M})$ to a usual klt pair 
by Lemma~\ref{abundant-to-nef-big}.
Another immediate corollary of Theorem~\ref{b-kodaira-non-negative} 
is that the injectivity theorem holds 
if the semi-ample divisor is sufficiently positive.

\begin{corollary}\label{nef-big-inj-g-pair}
    Suppose that $(X, B+\mathbf{M})$ is a projective g-klt $\QQ$-g-pair.
    Assume that $L$ is an integral $\mathbb{Q}$-Cartier
	divisor such that
	$A = L- (K_X+B+\mathbf{M}_X)$ is a big and semi-ample $\QQ$-divisor.
	Then the injectivity theorem holds for $\{(X, B + M), L\}$.
\end{corollary}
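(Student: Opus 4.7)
The plan is to deduce this immediately from Theorem~\ref{b-kodaira-non-negative} by verifying its condition (2) for the given data $\{(X, B+\mathbf{M}), L\}$. Since $A$ is already given to be semi-ample (and in particular $\QQ$-Cartier) on $X$, the $\QQ$-b-divisor $\mathbf{A}$ descends to $X$ with trace $\mathbf{A}_X = A$; so the only question is whether $\mathbf{A} + \mathbf{M}$ descends to a nef and big $\QQ$-divisor on some birational model of $X$.

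First I would choose a log resolution $\phi\colon Y \to X$ of the couple $(X, \floor{\supp B \cup \supp \mathbf{M}_X})$ on which $\mathbf{M}$ descends to a nef $\QQ$-divisor $\mathbf{M}_Y$; such $Y$ exists by assumption on $\mathbf{M}$. On this $Y$, the b-divisor $\mathbf{A}$ descends to the pullback $\phi^* A$, because $A$ is $\QQ$-Cartier on $X$. Thus on $Y$ we have the equality
\[ (\mathbf{A} + \mathbf{M})_Y = \phi^* A + \mathbf{M}_Y. \]

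Next I would check that $\phi^* A + \mathbf{M}_Y$ is nef and big. Pullback of a nef divisor by any projective morphism remains nef, and pullback of a big divisor by a birational morphism remains big, so $\phi^* A$ is nef and big. Adding the nef $\QQ$-divisor $\mathbf{M}_Y$ preserves both properties (nef$\,+\,$nef $=$ nef, and big$\,+\,$nef $=$ big since bigness is preserved under addition of pseudo-effective divisors). Hence $(\mathbf{A} + \mathbf{M})_Y$ is nef and big on $Y$, which is exactly condition (2) of Theorem~\ref{b-kodaira-non-negative}.

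Applying Theorem~\ref{b-kodaira-non-negative}(2) then yields the injectivity theorem for $\{(X, B+\mathbf{M}), L\}$. There is no real obstacle here: the entire content is already packaged into Theorem~\ref{b-kodaira-non-negative}, and this corollary is essentially the specialisation to the case where the semi-ample piece $A$ itself is already big, making the b-abundance/non-negativity hypothesis on $\mathbf{M}$ unnecessary. (Indeed, this also recovers Proposition~\ref{baby-case} in the case where $\mathbf{M}_X$ is not assumed nef and big but $A$ alone is.)
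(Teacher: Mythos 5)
Your proposal is correct and follows essentially the same route as the paper: one verifies condition (2) of Theorem~\ref{b-kodaira-non-negative} by noting that $\mathbf{A}+\mathbf{M}$ descends on a model $Y$ where $\mathbf{M}$ descends, with trace $\phi^*A+\mathbf{M}_Y$ nef and big. The paper compresses your pullback-and-addition argument into a citation of \cite{robpositivity1} Theorem 2.2.16, so the only difference is that you spell out the standard facts (big pulls back to big under a birational morphism, and big plus nef is big) explicitly.
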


\begin{proof}
    Denote by $\mathbf{A}$ the $\QQ$-b-divisor associated to $A$.
    Since $A$ is nef and big, $\mathbf{A} + \mathbf{M}$ descends to 
    a nef and big divisor on some birational model of $X$ 
    (see \cite[Theorem 2.2.16]{robpositivity1}), so 
    we can apply Theorem~\ref{b-kodaira-non-negative} (2) to conclude.
\end{proof}


\section{Injectivity theorem for surfaces}\label{inj-general-surfaces}

First, we show that the assumptions in Theorem~\ref{b-kodaira-non-negative}, such as
the b-nef-and-abundant condition on nef parts,
can be weakened for generalised pairs on projective normal surfaces.

\begin{corollary}\label{inj-surface}
    Let $(X, B+ \mathbf{M})$ be a g-klt $\QQ$-g-pair
    on a projective normal surface $X$.
    Assume that the nef part $\mathbf{M}$ has non-negative b-Iitaka dimension.
    Then the injectivity theorem holds for $(X, B+\mathbf{M})$.
\end{corollary}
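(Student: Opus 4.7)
The plan is to reduce Corollary~\ref{inj-surface} to Theorem~\ref{b-kodaira-non-negative}(1) by exploiting the dimension restriction $\dim X = 2$. Let $L$ be an integral $\QQ$-Cartier divisor such that $A := L - (K_X + B + \mathbf{M}_X)$ is semi-ample. By Lemma~\ref{red-kod-dim-not-zero} we may assume $\kappa(A) \geq 1$, and because $A$ is semi-ample we in fact have $\nu(A) = \kappa(A) \geq 1$, so $A \not\equiv 0$. Denote by $\mathbf{A}$ the $\QQ$-b-divisor associated to $A$ and choose a log resolution $\phi\colon Y \to X$ on which $\mathbf{M}$ descends to a nef $\QQ$-divisor $\mathbf{M}_Y$; by taking $Y$ sufficiently high and invoking the hypothesis that $\mathbf{M}$ has non-negative b-Iitaka dimension, we may further assume $\mathbf{M}_Y \sim_\QQ E$ for some effective $\QQ$-divisor $E$ on $Y$.

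The key claim is that $\mathbf{A} + \mathbf{M}$ is b-abundant, which reduces to showing that $N := \phi^* A + \mathbf{M}_Y$ is nef and abundant on the smooth projective surface $Y$. Nefness is immediate from nefness of the two summands. For the Iitaka dimension, since $N \sim_\QQ \phi^* A + E$ and $E \ge 0$, one has
\[ \kappa(N) = \kappa(\phi^* A + E) \geq \kappa(\phi^* A) = \kappa(A) \geq 1. \]
For the numerical dimension, $\phi^* A$ is nef and not numerically trivial while $\mathbf{M}_Y$ is nef, so $N$ is nef with $\nu(N) \geq \nu(\phi^* A) \geq 1$. Since $\dim Y = 2$, we have $\nu(N) \in \{1, 2\}$: if $\nu(N) = 2$ then $N^2 > 0$ together with nefness makes $N$ big on the surface by Riemann--Roch, so $\kappa(N) = 2 = \nu(N)$; if $\nu(N) = 1$ then $1 \leq \kappa(N) \leq \nu(N) = 1$ forces $\kappa(N) = 1 = \nu(N)$. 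Either way $N$ is abundant.

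Having verified that $\mathbf{A} + \mathbf{M}$ is b-abundant and that $\mathbf{M}$ has non-negative b-Iitaka dimension, Theorem~\ref{b-kodaira-non-negative}(1) applies and yields the injectivity theorem for $\{(X, B + \mathbf{M}), L\}$. Since $L$ was arbitrary, the injectivity theorem holds for $(X, B + \mathbf{M})$. There is no serious obstacle in this argument: the whole proof rests on the observation that on a surface a nef $\QQ$-divisor with $\nu \geq 1$ is automatically abundant once its Iitaka dimension is known to be at least one, which is arranged here by combining semi-ampleness of $A$ with the non-negative b-Iitaka dimension of $\mathbf{M}$.
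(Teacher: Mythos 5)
Your argument is correct and follows essentially the same route as the paper: reduce to Theorem~\ref{b-kodaira-non-negative}(1) by showing that on a high enough smooth model the nef divisor $\mathbf{A}_Y+\mathbf{M}_Y$ is abundant, using $\kappa(A)\ge 1$ (via Lemma~\ref{red-kod-dim-not-zero}) together with $\QQ$-effectivity of $\mathbf{M}_Y$ from the non-negative b-Iitaka dimension to get $\kappa(\mathbf{A}_Y+\mathbf{M}_Y)\ge 1$, and then the surface dichotomy forcing $\kappa=\nu$. The only cosmetic difference is that you split cases on $\nu$ while the paper splits on $\kappa$; both rest on the same fact that for a nef divisor on a surface the values $1\le\kappa\le\nu\le 2$ can only agree.
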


\begin{proof}
    Take an integral $\QQ$-Cartier divisor $L$ such that 
    $A = L - (K_X + B + \mathbf{M}_X)$ is semi-ample.
	By taking sufficiently high log resolutions, we can 
	take a smooth birational model $Y\xrightarrow{\phi} X$ on which
	$\mathbf{M}$ descends to be a nef $\QQ$-divisor $\mathbf{M}_Y$ satisfying
	$\kappa (\mathbf{M}_Y) \ge 0$.
	Denote by $\mathbf{A}$ the $\QQ$-b-divisor associated to $A$.
	By Theorem~\ref{b-kodaira-non-negative} (1), it suffices to show that 
	$\mathbf{A}_Y + \mathbf{M}_Y$ is nef and abundant.
	Since $\mathbf{A}_Y$ is nef and abundant, 
	$\kappa (\mathbf{A}_Y) = \nu (\mathbf{A}_Y) \ge 1$ (cf. Lemma~\ref{red-kod-dim-not-zero}), hence 
	$$\kappa (\mathbf{A}_Y + \mathbf{M}_Y)\ge \kappa (\mathbf{A}_Y) \ge 1.$$
	If $\kappa (\mathbf{A}_Y + \mathbf{M}_Y) = 2$, then also $\nu (\mathbf{A}_Y + \mathbf{M}_Y) = 2$ by 
	\cite[Chapter V, Lemma 2.1]{nakayama}, that is, $\mathbf{A}_Y + \mathbf{M}_Y$ is nef and abundant.
	If $\kappa (\mathbf{A}_Y + \mathbf{M}_Y) = 1$, then by the same lemma from \cite{nakayama},
	$\nu(\mathbf{A}_Y + \mathbf{M}_Y)$ can not be 2, and so also $\nu (\mathbf{A}_Y + \mathbf{M}_Y) =1$.
	As a summary, $\mathbf{A}_Y + \mathbf{M}_Y$ is always nef and abundant 
	under the condition $\kappa (\mathbf{M}_Y)\ge 0$.
\end{proof}

\begin{remark}
    Nef divisors can have Iitaka dimension $-\infty$. 
    For example, pick a line bundle $\mc{L}\in \picard^0(X)$ on a nonsingular
    projective variety $X$,
    that is, $\mc{L}$ is a numerically trivial line bundle (see \cite[Corollary 1.4.38]{robpositivity1}).
    Write $\mc{L} = \mc{O}_X(D)$  
    for a Cartier divisor $D$.
    Then $\kappa (D) = 0$ if $\mc{L}$ is trivial or torsion,
    but $\kappa (D) = -\infty$ otherwise (see \cite[Example 2.1.9]{robpositivity1}).
\end{remark}


Now, we show that the injectivity theorem holds 
for g-klt $\QQ$-g-pairs on projective surfaces without any extra conditions
on the nef parts of the generalised pairs.

\begin{theorem}[=Theorem~\ref{inj-surface-b-iitaka}]
    Let $X$ be a normal projective variety of dimension 2, and
    let $(X, B+ \mathbf{M})$ be a projective g-klt $\QQ$-g-pair. 
    Then the injectivity theorem holds for $(X, B+\mathbf{M})$.
\end{theorem}

\begin{proof}
By Proposition~\ref{reduce-to-cartier}, it suffices to consider the case when $L$
is an integral \emph{Cartier} divisor such that $L- (K_X + B + \mathbf{M}_X)$ is semi-ample.
Moreover, by Lemma~\ref{suff-large-m}, we can assume that $D$ is an effective Cartier divisor
in the linear system $\abs{m(L- (K_X + B + \mathbf{M}_X))}$ for sufficiently large $m\gg 0$.\\

\emph{Step 1}.
Assume that the given g-klt $\QQ$-g-pair $(X, B+\mathbf{M})$ is 
given by the data $(Y\xrightarrow{\phi} X, B_Y + \mathbf{M}_Y)$, where
$\phi\colon Y\to X$ is a birational morphism from a 
smooth projective surface $Y$ such that $\phi$ is a log resolution of $(X, B + \mathbf{M}_X)$ and
$\mathbf{M}_Y$ is a nef $\QQ$-divisor on $Y$ satisfying $\mathbf{M} = \ol{\mathbf{M}_Y}$.
By \cite[Remark 1.15]{Filipazzi18} and \cite[Remark 4.2 (3)]{BirkarZhang}
(see also \cite[Lemma 2.4]{Han-Liu}) we deduce that $(X, B)$
is a klt pair and that $\mathbf{M}_X$ is a nef $\QQ$-divisor on $X$.
By considering a small $\QQ$-factorial modification of $(X, B)$
\cite[Corollary 1.37]{Kollar-sing-MMP-book} and using the fact 
that there is no small contraction between surfaces, we conclude
that $X$ is $\QQ$-factorial.
Moreover, by considering a $\QQ$-factorial terminal modification of
the $\QQ$-factorial g-klt $\QQ$-g-pair $(X, B + \mathbf{M})$ 
and using the fact that a terminal surface is smooth,
we can assume that $\phi\colon Y\to X$ is a birational contraction from a 
\emph{smooth} projective surface $Y$
and that $\mathbf{M}_Y$ is a \emph{nef} $\QQ$-divisor (see \cite[Remark 1.15, Lemma 3.4, Remark 3.6]{Filipazzi18}) on $Y$ such that
\[ K_Y + B_Y + \mathbf{M}_Y = \phi^*(K_X + B + \mathbf{M}_X) \text{ with } 0\le B_Y < 1. \]
By construction, $(Y, B_Y + \mathbf{M})$ is also a g-klt $\QQ$-g-pair.\\


\emph{Step 2}.
Let $A$ be the semi-ample $\QQ$-divisor on $X$ given by
$$A := L - (K_X + B + \mathbf{M}_X).$$
Denote by $\mathbf{A}$
the $\QQ$-b-divisor associated to $A$.
Denote by $f\colon X\to Z$ the contraction 
induced by the semi-ample divisor $A$.  
Then $A = f^* H_Z$ for some ample $\QQ$-divisor $H_Z$ on $Z$.

\emph{Step 2a}.
If $f$ is birational, then the injectivity theorem holds for 
$\{(X, B + \mathbf{M}), L\}$
by Corollary~\ref{nef-big-inj-g-pair} (as $A$ is nef and big).

\emph{Step 2b}.
If $f$ is not birational, then we can assume that $Z$ is a smooth projective curve (cf. Lemma~\ref{red-kod-dim-not-zero}).
Observe that $\mathbf{A}_Y = \phi^* A$ is also semi-ample.
The induced morphism $g:= f\circ \phi\colon Y\to Z$ is the contraction associated to $\mathbf{A}_Y$.
\[\xymatrix{
Y\ar[r]^{\phi}\ar[rd]_g & X\ar[d]^f \\
 & Z
}\]
As $X$ is a normal surface, a general fibre of $f$ is a smooth projective curve.
Thus, $\phi$ is an isomorphism on general fibres of $g$ and $f$ by generic smoothness.  
Let $F$ be a general fibre of $g$.  
Since $\mathbf{M}_Y$ is  
a nef $\QQ$-divisor, we have $F\cdot \mathbf{M}_Y \ge 0$.
Moreover, as $H_Z$ is an ample $\QQ$-divisor on $Z$, $\mathbf{A}_Y = g^* H_Z$
is $\QQ$-linearly equivalent to a finite $\QQ^{>0}$-linear
combination of general fibres of $g$.
If $F\cdot \mathbf{M}_Y > 0$, then $\mathbf{A}_Y\cdot \mathbf{M}_Y > 0$,
and hence we have
\[ (\mathbf{A}_Y + \mathbf{M}_Y)^2 = (\mathbf{A}_Y)^2 + 2\mathbf{A}_Y\cdot \mathbf{M}_Y
+ (\mathbf{M}_Y)^2 > 0. \]
Thus, $\mathbf{A}_Y + \mathbf{M}_Y$ is nef and big, which implies that 
the injectivity theorem holds for $\{(X, B + \mathbf{M}), L\}$ by 
Theorem~\ref{b-kodaira-non-negative} (2). \\

We assume that $\deg (\mathbf{M}_Y|_F) = 0$ in the rest of the proof.\\


\emph{Step 3}.
If $\hg^0(F, \mc{O}_F(n\mathbf{M}_Y|_F)) \not= 0$ for some $n\in \NN$, 
then $\mathbf{M}_Y|_F \sim_{\QQ} 0$ on $F$.
In this case, we have the equality
$$
\kappa (\mathbf{M}_Y|_F) = \nu (\mathbf{M}_Y|_F) = \deg (\mathbf{M}_Y|_F) = 0,
$$
which implies that $\mathbf{M}$ is b-nef-and-abundant over $Z$.  
Let $\varphi\colon \wt{Y}\to Y$ be a log resolution on which $\mathbf{M}$
descends to a nef $\QQ$-divisor $\mathbf{M}_{\wt{Y}}$.
Then $\mathbf{M}_{\wt{Y}}$ is nef and $(g\circ \varphi)$-abundant.
Recall that $\mathbf{A}_{\wt{Y}} = (g\circ \varphi)^* H_Z$
for an ample $\QQ$-divisor $H_Z$ on $Z$.
So, by \cite[Corollary V.2.28]{nakayama},
$\mathbf{A}_{\wt{Y}} + \mathbf{M}_{\wt{Y}}$ is nef and abundant.
By Lemma~\ref{abundant-to-nef-big}, there exist a resolution $\pi\colon W\to \wt{Y}$, 
a contraction $\tau\colon W\to C$ of non-singular projective varieties,
and a nef and big $\QQ$-divisor $B_C$ on $C$ such that 
\[ \pi^*(\mathbf{A}_{\wt{Y}} + \mathbf{M}_{\wt{Y}})\sim_{\QQ} \tau^* B_C.  \]
Note that $\nu (\mathbf{A}_{\wt{Y}} + \mathbf{M}_{\wt{Y}}) \ge 1$ by construction.
If $\nu (\mathbf{A}_{\wt{Y}} + \mathbf{M}_{\wt{Y}}) = 2$, then
$\mathbf{A}_{\wt{Y}} + \mathbf{M}_{\wt{Y}}$ is a nef and big divisor
by \cite[Lemma V.2.1 (2)]{nakayama}.
Therefore, the injectivity theorem holds for $\{(X, B + \mathbf{M}), L\}$ by 
Theorem~\ref{b-kodaira-non-negative} (2).

So, we can assume that $\nu (\mathbf{A}_{\wt{Y}} + \mathbf{M}_{\wt{Y}}) = 1$.
As numerical dimension of a nef divisor is invariant up to pulling back via 
proper surjective morphisms (see \cite[Proposition V.2.7]{nakayama}),
the numerical dimension $\nu (B_C)$ is also equal to one.
Since $B_C$ is nef and big, we conclude that $C$ is a smooth projective curve
and that $B_C$ is an ample $\QQ$-divisor on $C$.
In particular, $\pi^*(\mathbf{A}_{\wt{Y}} + \mathbf{M}_{\wt{Y}})$ is semi-ample.
We can assume that $\tau\colon W\to C$ is the contraction induced by 
$\pi^*(\mathbf{A}_{\wt{Y}} + \mathbf{M}_{\wt{Y}})$.  
Let $F'$ be a general fibre of $g\circ \varphi\circ \pi\colon W\to Z$, 
a smooth projective curve that is isomorphic 
to a general fibre $\wt{F}$ of $g\circ \varphi$
(also to a general fibre $F$ of $g$).  
Since $F\cdot \mathbf{A}_Y = 0$ and since $\deg (\mathbf{M}_Y|_F) = 0$, we have
\[ F'\cdot \pi^*(\mathbf{A}_{\wt{Y}} + \mathbf{M}_{\wt{Y}}) 
= \wt{F}\cdot (\mathbf{A}_{\wt{Y}} + \mathbf{M}_{\wt{Y}}) = 0. \]
Thus, $F'$ is contracted by $\tau$.  
By the rigidity lemma (see \cite[Lemma 1.6]{km98} 
and \cite[Lemma 1.15]{Debarre_higher_dim_ag}), there exists a non-constant morphism 
$h\colon Z\to C$ such that $\tau$ factors through $g\circ\varphi\circ \pi$ via $h$.
As all the morphisms are contractions, $h$ is an isomorphism.
So, for a sufficiently small rational number $0<\varepsilon <1$, we have
\[ \pi^*(\mathbf{A}_{\wt{Y}} + \mathbf{M}_{\wt{Y}}) - \varepsilon \pi^*(\varphi^*\phi^* D) \]
is $\QQ$-effective, hence 
$(\mathbf{A}_{\wt{Y}} + \mathbf{M}_{\wt{Y}}) - \varepsilon (\phi \circ \varphi)^*D$ 
is also $\QQ$-effective.
Combining with Step 2 in the proof of Theorem~\ref{b-kodaira-non-negative}
(up to replacing $\wt{Y}$ by higher log resolutions), we see that
the injectivity theorem holds for $\{(X, B + \mathbf{M}), L\}$. \\

We assume that $\hg^0(F, \mc{O}_F(n\mathbf{M}_Y|_F)) = 0$ for any $n\in \NN$,
that is, $\kappa (\mathbf{M}_Y|_F) = -\infty$ in the rest of the proof.
In particular, we have assumed that $\mathbf{M}_Y|_F \not\sim_{\QQ} 0$. \\


\emph{Step 4}.
Now, we can compute $\hg^1(F, \mc{O}_F (K_Y + B_Y + \mathbf{M}_Y + \mathbf{A}_Y))$ as follows.
As $F$ is a general fibre, we have $K_Y|_F = K_F$; moreover, $\mathbf{A}_Y|_F = 0$ since
$g$ is the contraction induced by $\mathbf{A}_Y$.  Therefore, Serre duality implies that
\begin{align*}
	\hg^1(F, \mc{O}_F (K_Y + B_Y + \mathbf{M}_Y + \mathbf{A}_Y)) &= \hg^1(F, \mc{O}_F( K_F + B_Y|_F + \mathbf{M}_Y|_F )) \\
	  &\simeq \hg^0(F, \mc{O}_F( -B_Y|_F - \mathbf{M}_Y|_F ))^{\vee}.
\end{align*}
Recall that $B_Y|_F$ is an effective divisor.
Thus, we conclude that 
\[ h^0(F, \mc{O}_F( -B_Y|_F - \mathbf{M}_Y|_F ) )\le h^0(F, \mc{O}_F( - \mathbf{M}_Y|_F )).\]
However, $h^0(F, \mc{O}_F( - \mathbf{M}_Y|_F)) = 0$ as
$\deg \mathbf{M}_Y|_F = 0$ and $\mathbf{M}_Y|_F \not\sim_{\QQ} 0$. 
Then we have
\begin{equation}
	\hg^1(F, \mc{O}_F (K_Y + B_Y + \mathbf{M}_Y + \mathbf{A}_Y)) = 0. \label{ch-vanising}
\end{equation}\\


\emph{Step 5}.
Denote by $L_Y$ the Cartier divisor
\[ \phi^* L = K_Y + B_Y + \mathbf{M}_Y + \mathbf{A}_Y. \]
By \cite[Lemma II.2.11]{nakayama}, we have the canonical isomorphism
\[ \mc{O}_X(L) \simeq \phi_* \mc{O}_Y(L_Y). \]
Consider the Leray spectral sequence 
\[ E_2^{pq} := \hg^p(X, R^q\phi_* \mc{O}_Y(L_Y)) \Rightarrow \hg^{p+q}(Y, \mc{O}_Y(L_Y)). \]
Taking $q=0$, we get the edge morphism of the spectral sequence
\[ e^p\colon \hg^p(X, \mc{O}_X(L)) \to \hg^p(Y, \mc{O}_Y(L_Y)). \]
The five-term exact sequence shows that $e^0$ is an isomorphism
and that $e^1$ is injective.

By \cite[Exercise 5.3.13 (b)]{Liuqing}, there is an isomorphism
\[ R^1g_* \mc{O}_Y(L_Y)\otimes_{\mc{O}_Z} k(z)\simeq 
\hg^1(Y_z, \mc{O}_{Y_z}(L_Y) ) \]
for every point $z\in Z$ (not necessarily a closed point).  
Since $R^1g_* \mc{O}_Y(L_Y)$ is a coherent sheaf on $Z$, \eqref{ch-vanising}
implies that $R^1g_* \mc{O}_Y(L_Y)$ is a skyscraper sheaf on $Z$.

Denote by $D_Y\in \abs{m \mathbf{A}_Y}$ the pullback divisor $\phi^* D$. 
By construction, every component of $D_Y$ is vertical$/Z$.
Then by varying $D$ generally for $m\gg 0$ (see Lemma~\ref{suff-large-m}), we can assume that $D_Y$
consists of finitely many (smooth) general fibres of $g$. 
In particular, we can assume that $g_*(D_Y)$ avoids the support of $R^1g_* \mc{O}_Y(L_Y)$,
which is independent of the choice of the divisor $D$.\\


\emph{Step 6}.
Consider the exact sequence of invertible sheaves
\begin{equation}
	0\to \mc{O}_Y(L_Y) \to \mc{O}_Y (L_Y + D_Y) \to \mc{O}_{D_Y} (L_Y + D_Y) \to 0.\label{basic-ses}
\end{equation}
Taking cohomologies gives the long exact sequence
\begin{multline}
0\to \hg^0(Y, \mc{O}_Y(L_Y)) \to \hg^0(Y, \mc{O}_Y (L_Y + D_Y)) 
     \to \hg^0(Y, \mc{O}_{D_Y} (L_Y + D_Y)) \\
	 \to \hg^1(Y, \mc{O}_Y(L_Y)) 
	 \to \hg^1(Y, \mc{O}_Y (L_Y + D_Y)) \to \hg^1(Y, \mc{O}_{D_Y} (L_Y + D_Y)) \\
	 \to \hg^2(Y, \mc{O}_Y(L_Y)) \to \hg^2(Y, \mc{O}_Y (L_Y + D_Y)) \to 0.\label{long-1}
\end{multline}
By the assumption in Step 5, $D_Y$ consists of a finite union of general fibres of $g$, hence 
$\hg^1(Y, \mc{O}_{D_Y} (L_Y + D_Y))$ is a direct sum of $\hg^1(F, \mc{O}_{F} (L_Y + D_Y))$.
Moreover, as $D_Y|_F = 0$, the sheaf $\mc{O}_F(L_Y + D_Y)$ is equal to $\mc{O}_F(L_Y)$
whose first cohomology vanishes by \eqref{ch-vanising}.
Therefore, we have that
\[ \hg^2(Y, \mc{O}_Y(L_Y)) \simeq \hg^2(Y, \mc{O}_Y (L_Y + D_Y)) \]
\begin{align*}
    \text{and }\,	0&\to \hg^0(Y, \mc{O}_Y(L_Y)) \to \hg^0(Y, \mc{O}_Y (L_Y + D_Y)) \to \hg^0(Y, \mc{O}_{D_Y} (L_Y + D_Y))\\
	 &\to \hg^1(Y, \mc{O}_Y(L_Y)) \to \hg^1(Y, \mc{O}_Y (L_Y + D_Y)) \to 0.
\end{align*}
On the other hand, as $\phi\colon Y\to X$ is an isomorphism near 
a general fibre $F$ of $g\colon Y\to Z$,
the computation also applies to the exact sequence of invertible sheaves on $X$,
\[ 0\to \mc{O}_X(L) \to \mc{O}_X (L + D) \to \mc{O}_{D} (L + D) \to 0, \]
where $D$ consists of a finite union of general fibres of $f\colon X\to Z$.
In particular, we have
\[ \hg^2(X, \mc{O}_X(L)) \simeq \hg^2(X, \mc{O}_X(L+D)). \]
Notice that we have the following commutative diagram,
\[\xymatrix{
\hg^1(X, \mc{O}_X(L))\ar[d]\ar[r]^-{e_1} & \hg^1(Y, \mc{O}_Y(L_Y))\ar[d] \\
\hg^1(X, \mc{O}_X(L+D))\ar[r] & \hg^1(Y, \mc{O}_Y(L_Y + D_Y))
}\]
where the vertical morphisms are the multiplication homomorphisms 
on cohomologies.
Therefore, since $e^1$ is injective (see Step 5), it remains to show that
the multiplication homomorphism of first cohomologies on $Y$,
$$\hg^1(Y, \mc{O}_Y(L_Y)) \to \hg^1(Y, \mc{O}_Y(L_Y + D_Y)),$$
is injective.\\


\emph{Step 7}.
Taking higher direct images of the exact sequence \eqref{basic-ses} gives that 
\[ 0\to g_*\mc{O}_Y(L_Y) \to g_*\mc{O}_Y (L_Y + D_Y) \to g_*\mc{O}_{D_Y} (L_Y + D_Y) \to R^1g_*\mc{O}_Y(L_Y)\to \cdots.  \]
However, by Step 5, the divisor $g_*(D_Y)$ avoids the support of $R^1g_* \mc{O}_Y(L_Y)$, 
hence we have the exact sequence
\[ 0\to g_*\mc{O}_Y(L_Y) \to g_*\mc{O}_Y (L_Y + D_Y) \to g_*\mc{O}_{D_Y} (L_Y + D_Y) \to 0. \]
Then there is a long exact sequence of cohomologies
\begin{multline}
	0\to \hg^0(Y, \mc{O}_Y(L_Y)) \to \hg^0(Y, \mc{O}_Y (L_Y + D_Y)) \to \hg^0(Y, \mc{O}_{D_Y} (L_Y + D_Y))  \\
	 \to \hg^1(Z, g_*\mc{O}_Y(L_Y)) \to \hg^1(Z, g_*\mc{O}_Y (L_Y + D_Y)) \to \hg^1(Z, g_*\mc{O}_{D_Y} (L_Y + D_Y))\to \cdots. \label{long-2}
\end{multline}
Recall that $D_Y$ consists of finitely many (smooth) general fibres of $g$,
so $D_Y$ is vertical$/Z$.  Thus, $g_*\mc{O}_{D_Y} (L_Y + D_Y)$ supports
on finitely many closed points of the smooth curve $Z$.
By \cite[Theorem III.2.7]{Hart}, we have that
\[ \hg^1(Z, g_*\mc{O}_{D_Y} (L_Y + D_Y)) = 0. \]
Comparing the exact sequences \eqref{long-1} and \eqref{long-2},
we see that it suffices to show that 
\[ \hg^1(Z, g_* \mc{O}_Y(L_Y)) = 0. \]
Notice that we can write
\[ g_*\mc{O}_Y(L_Y) = g_*\mc{O}_Y(K_Y + B_Y + \mathbf{M}_Y + \mathbf{A}_Y)
= \omega_Z\otimes_{\mc{O}_Z} g_*\mc{O}_Y(K_{Y/Z} + B_Y + \mathbf{M}_Y + \mathbf{A}_Y) \]
by projection formula (cf. \cite[Corollary 24]{Kleiman80}).  So, it suffices to show that
\[ \hg^1(Z, \omega_Z\otimes_{\mc{O}_Z} g_*(\omega_{Y/Z}\otimes_{\mc{O}_Y} \mc{O}_Y(B_Y + \mathbf{M}_Y + \mathbf{A}_Y))) = 0. \]
Set 
\[ \mc{G}_Y := \mc{O}_Y(B_Y + \mathbf{M}_Y + \mathbf{A}_Y). \]
The vanishing is trivial if 
$g_*(\omega_{Y/Z}\otimes_{\mc{O}_Y} \mc{G}_Y)$ is zero,
so we assume that $g_*(\omega_{Y/Z}\otimes_{\mc{O}_Y} \mc{G}_Y)\not=0$.  
Moreover, as $Z$ is a smooth curve, 
the coherent sheaf $g_*(\omega_{Y/Z}\otimes_{\mc{O}_Y} \mc{G}_Y)$
is locally free, in other words, $g_*(\omega_{Y/Z}\otimes_{\mc{O}_Y} \mc{G}_Y)$ is a vector bundle.

By the vanishing theorem for Nakano-positive vector bundles (see \cite[page 97]{robpositivity2}),
it suffices to show that the vector bundle
$g_*(\omega_{Y/Z}\otimes_{\mc{O}_Y} \mc{G}_Y)$
is Nakano-positive.  However, by \cite[Theorem 2.6]{Umemura73}, since $Z$ 
is a smooth projective curve (i.e., a Riemann surface), Nakano-positivity of
vector bundles on $Z$ is equivalent to ampleness.  So, it suffices to show that 
$g_*(\omega_{Y/Z}\otimes_{\mc{O}_Y} \mc{G}_Y)$
is an ample vector bundle on $Z$, that is,
every quotient vector bundle of $g_*(\omega_{Y/Z}\otimes_{\mc{O}_Y} \mc{G}_Y)$ 
has positive degree (see \cite[\S 2]{Hart-ample-vbdl}).\\


\emph{Step 8}.
Recall that $0\le B_Y<1$ is an effective $\QQ$-divisor (see Step 1).
Let $(\mc{L}_{B_Y}, h_{B_Y})$ be a hermitian $\QQ$-line bundle
representing the canonical hermitian class of $B_Y$ (see \S \ref{cano-h-class-divisors}).
Then $\Theta_{h_{B_Y}}(\mc{L}_{B_Y}) \ge 0$ by Remark~\ref{eff-Q-div-h-metric}.
Furthermore, by Lemma~\ref{res-pullback-cano-metric}, 
$(\mc{L}_{B_Y}|_F, h_{B_Y}|_F)$ is a hermitian $\QQ$-line bundle
representing the canonical hermitian class of the $\QQ$-divisor $B_Y|_F$.
Since $B_Y<1$,
the multiplier ideal sheaf $\mc{I}(h_{B_Y}|_F)$ is trivial
by \cite[Theorem 9.3.42, Proposition 9.5.13]{robpositivity2}.
By the Ohsawa-Takegoshi extension theorem (see \cite[Proposition 1.3, Remark 3.2.3]{PT18}), we have the inclusion 
\[ \mc{O}_F\simeq \mc{I}(h_{B_Y} |_F)  \subseteq 
\mc{I}(h_{B_Y})\cdot \mc{O}_F. \]
Thus, the restriction of the multiplier ideal sheaf $\mc{I}(h_{B_Y})|_F$ is trivial.

Denote by $\mc{L}_{\mathbf{M}_Y}$ a $\QQ$-line bundle in the $\QQ$-line 
bundle class $\Xi ([\mathbf{M}_Y])$ (see Lemma~\ref{Q-div-Q-line-bdl}).
Take $r\in \NN$ such that $r\mathbf{M}_Y$ is a nef Cartier divisor.
Then the line bundle $\mc{O}_Y(r\mathbf{M}_Y)$ admits a singular hermitian metric $H$
with semi-positive curvature (see \cite[Proposition 4.2]{De90}).
Hence, by Lemma~\ref{fraction-metric}, the $\QQ$-line bundle 
$\mc{L}_{\mathbf{M}_Y}$ can be equipped with
a singular hermitian metric $h_{\mathbf{M}_Y}$ satisfying that
\[ \mc{I}(H)\subseteq \mc{I}(h_{\mathbf{M}_Y}) \text{ and } 
\Theta_{h_{\mathbf{M}_Y}}(\mc{L}_{\mathbf{M}_Y})\ge 0. \]
By Definition-Lemma~\ref{pullback-curvature}, 
$(\mc{L}_{\mathbf{M}_Y}|_F, h_{\mathbf{M}_Y}|_F)$ is a hermitian $\QQ$-line bundle on $F$
with semi-positive curvature.
However, as $\deg (r \mathbf{M}_Y)|_F = r\deg \mathbf{M}_Y|_F = 0$, 
the singular hermitian metric $h_{\mathbf{M}_Y} |_F$ on $F$ is 
actually smooth (see \cite[Lemma 13.2]{Schnell18}), hence $\mc{I}(h_{\mathbf{M}_Y} |_F) \simeq \mc{O}_F$.
Again, by the Ohsawa-Takegoshi extension theorem, there is an inclusion 
\[ \mc{O}_F\simeq \mc{I}(h_{\mathbf{M}_Y} |_F)  \subseteq 
\mc{I}(h_{\mathbf{M}_Y})\cdot \mc{O}_F, \]
so we conclude that $\mc{I}(h_{\mathbf{M}_Y})|_F$ is trivial on $F$.

Let $A_Z$ be the ample $\QQ$-divisor on $Z$ (not necessarily effective)
such that $\mathbf{A}_Y = g^*A_Z$.
Let $\alpha\in \NN$ be a positive integer such that $\alpha A_Z$ is an ample Cartier divisor.
Since $\mc{O}_Z(\alpha A_Z)$ is an ample line bundle, there exist
(by \cite[Proposition 4.2]{De90})
\begin{itemize}
	\item a K\"ahler form $\omega$ on $Z$,
	\item a positive real number $\varepsilon > 0$, and
	\item a \emph{smooth} hermitian metric $H_Z$ on $\mc{O}_Z(\alpha A_Z)$ such that 
	      \[ \Theta_{H_Z}(\mc{O}_Z(\alpha A_Z)) \ge \varepsilon \omega. \]
\end{itemize} 
Let $\mc{L}_{A_Z}$ be a $\QQ$-line bundle in the class
$\Xi ([A_Z])$ such that $\mc{L}_{A_Z}^{\otimes \alpha} \simeq \mc{O}_Z(\alpha A_Z)$ 
(see Lemma~\ref{Q-div-Q-line-bdl}).
Then by Lemma~\ref{fraction-metric}
there exists a smooth hermitian metric $h_{A_Z}$ on $\mc{L}_{A_Z}$
such that 
$$\Theta_{h_{A_Z}}(\mc{L}_{A_Z}) = \frac{1}{\alpha}\Theta_{H_Z}(\mc{O}_Z(\alpha A_Z)) 
\ge \frac{\varepsilon}{\alpha} \omega.  $$
Denote by $(\mc{L}_{\mathbf{A}_Y}, h_{\mathbf{A}_Y})$ the pullback 
hermitian $\QQ$-line bundle of $(\mc{L}_{A_Z}, h_{A_Z})$ as in 
Definition-Lemma~\ref{pullback-curvature}.
Then $\mc{L}_{\mathbf{A}_Y}$ is a $\QQ$-line bundle in the class $\Xi ([\mathbf{A}_Y])$.
Moreover, we have
\[ \Theta_{h_{\mathbf{A}_Y}}(\mc{L}_{\mathbf{A}_Y}) = g^{\star}\Theta_{h_{A_Z}}(\mc{L}_{A_Z}) =
\frac{1}{\alpha}g^{\star}\Theta_{H_Z}(\mc{O}_Z(\alpha A_Z)) 
\ge \frac{\varepsilon}{\alpha} g^{\star}\omega.  \]
Furthermore, by construction, the multiplier ideal sheaf $\mc{I}(h_{\mathbf{A}_Y})$ is trivial.

Note that $B_Y + \mathbf{M}_Y + \mathbf{A}_Y$ is a Cartier divisor on $Y$.
Thus, by Lemma~\ref{sum-metric}, 
\[ h_{\mc{G}_Y} := h_{B_Y}\cdot h_{\mathbf{M}_Y} \cdot h_{\mathbf{A}_Y} \]
is a well-defined singular hermitian metric on the integral line bundle $\mc{G}_Y$ such that 
\[ \Theta_{h_{\mc{G}_Y}}(\mc{G}_Y) = \Theta_{h_{B_Y}}(\mc{L}_{B_Y})
+ \Theta_{h_{\mathbf{M}_Y}}(\mc{L}_{\mathbf{M}_Y}) 
+ \Theta_{h_{\mathbf{A}_Y}}(\mc{L}_{\mathbf{A}_Y}) \ge 
\frac{1}{\alpha}g^{\star}\Theta_{H_Z}(\mc{O}_Z(\alpha A_Z)). \]
On the other hand, the multiplier ideal sheaves satisfy that (by Lemma~\ref{sum-metric})
\[ \mc{I}(h_{B_Y}\cdot h_{\mathbf{M}_Y} \cdot h_{\mathbf{A}_Y})\subseteq 
\mc{I}(h_{B_Y})\cdot \mc{I}(h_{\mathbf{M}_Y}) \cdot \mc{I}(h_{\mathbf{A}_Y}). \]
As both $h_{\mathbf{M}_Y}|_F$ and $h_{\mathbf{A}_Y}|_F$ are smooth 
hermitian metrics, we have (see Theorem~\ref{sub-add})
\[ \mc{O}_F = \mc{I}(h_{B_Y}|_F) = 
\mc{I}(h_{B_Y}|_F\cdot h_{\mathbf{M}_Y}|_F \cdot h_{\mathbf{A}_Y}|_F)
\subseteq \mc{I}(h_{B_Y}\cdot h_{\mathbf{M}_Y} \cdot h_{\mathbf{A}_Y})\cdot \mc{O}_F, \]
that is, $\mc{I}(h_{B_Y}\cdot h_{\mathbf{M}_Y} \cdot h_{\mathbf{A}_Y}) = \mc{I}(h_{\mc{G}_Y})$ is trivial
on a general fibre $F$ of $g$.  
Thus, all the conditions in Proposition~\ref{pushf-positivity}
are satisfied.  

Let $\mc{E}$ be an arbitrary quotient vector bundle of $g_*(\omega_{Y/Z}\otimes_{\mc{O}_Y}\mc{G}_Y)$.
Then Proposition~\ref{pushf-positivity} implies that there exists a 
singular hermitian metric $h_{\mc{E}}$ on $\mc{E}$ such that
\[ \Theta_{\det h_{\mc{E}}}(\det \mc{E}) \ge 
\frac{1}{\alpha}\Theta_{H_Z}(\mc{O}_Z(\alpha A_Z))
\ge \frac{\varepsilon}{\alpha} \omega. \]
As the image of $\Theta_{H_Z}(\mc{O}_Z(\alpha A_Z))$ in $\hg^2(Z, \CC)$ is 
equal to $\deg \mc{O}_Z(\alpha A_Z) = \deg (\alpha A_Z)$ 
(see \cite[page 192]{robpositivity2} and \cite[page 144]{Griffiths-Harris}),
we have
\[ \alpha \deg \mc{E} := \alpha \deg (\det \mc{E}) \ge \deg (\alpha A_Z) > 0, \]
since $\omega$ is a K\"ahler form on $Z$.  
This shows that $\deg \mc{E} > 0$ as desired.
\end{proof}

\medskip

\printbibliography








 \vspace{1em}
 
\noindent\small{Santai Qu} 

\noindent\small{\textsc{Institute of Geometry and Physics, University of Science and Technology of China, No. 96 Jinzhai Road, Hefei, Anhui Province, 230026, China} }

\noindent\small{Email: \texttt{santaiqu@ustc.edu.cn}}

 \end{document}